\documentclass[11pt]{article}
\usepackage{amsmath,amsfonts,amssymb,amsthm,amscd,xypic, hyperref}
\usepackage{enumitem}

\title{Analytic spectral flow formula for unitaries and Levinson's theorem}
\author{Angus Alexander, Alan Carey, Galina Levitina, Adam Rennie\thanks{email:\,\texttt{angusa@uow.edu.au,\,renniea@uow.edu.au,\,galina.levitina@anu.edu.au,\,alan.carey@anu.edu.au }}}

\advance\topmargin by -\headheight
\advance\topmargin by -\headsep
\evensidemargin=\oddsidemargin
\numberwithin{equation}{section} 

\theoremstyle{plain} 
\newtheorem{thm}{Theorem}[section]
\newtheorem{lemma}[thm]{Lemma}
\newtheorem{prop}[thm]{Proposition}
\newtheorem{corl}[thm]{Corollary}

\theoremstyle{definition} 
\newtheorem{defn}[thm]{Definition}

\newtheorem{rmk}[thm]{Remark} 

\DeclareMathOperator{\Dom}{Dom}   
\DeclareMathOperator{\End}{End}   
\DeclareMathOperator{\Id}{Id}     
\DeclareMathOperator{\Log}{Log} 
\DeclareMathOperator{\Tr}{Tr}     
\DeclareMathOperator{\tr}{tr}     
\DeclareMathOperator{\Det}{Det}     
\DeclareMathOperator{\spf}{sf}     
\DeclareMathOperator{\slim}{\textup{s-lim}}

\newcommand{\eps}{\varepsilon} 
\newcommand{\e}{\mathrm{e}}

\newcommand{\B}{\mathcal{B}}  
\newcommand{\C}{\mathbb{C}}   
\renewcommand{\H}{\mathcal{H}}  
\newcommand{\K}{\mathcal{K}} 
\renewcommand{\L}{\mathcal{L}} 
\newcommand{\N}{\mathbb{N}}   
\newcommand{\R}{\mathbb{R}}   
\newcommand{\Sf}{\mathbb{S}}  
\newcommand{\Z}{\mathbb{Z}}   

\newcommand{\ox}{\otimes}     

\renewcommand{\d}{\mathrm{d}} 

\newcommand{\pairing}[2]{(#1\mathbin{|}#2)} 
\newcommand{\norm}[1]{\left|\left|#1 \right| \right|}

\begin{document}

\maketitle

\begin{abstract}

We prove an integral formula for the spectral flow of differentiable loops of unitaries of the form ${\rm Id}+$Schatten. Our formula is in terms of a  regularised winding number, expressed in terms of exact differential forms, and we show how the formula extends to non-closed paths. 
Applying these ideas to the scattering operator of Schr\"{o}dinger scattering systems yields explicit formulae for the number of bound states, possibly modified by the presence of resonances, of the system in terms of the potential. We finish by briefly considering the paths of unbounded operators obtained from unitary loops via the Cayley transform. These include cases of moving domain as well as paths with non-constant Hilbert space.
\end{abstract}


\parindent=0.0in
\parskip=4pt

\section{Introduction}

Starting from the invention by Lusztig, the notion of spectral flow played a significant role in geometry as demonstrated by  Atiyah, Patodi and Singer \cite{aps75, aps76}. Initially defined in topological   terms as an intersection number, later an analytical definition was given by Phillips \cite{Phi96,Phi97}. Exploiting an essential contribution of Getzler \cite{Getzler}, the papers \cite{CP1,CPotSuk}  established  spectral flow formulae as integrals of one-forms on affine subspaces of the Banach manifold of self-adjoint Fredholms.
In this paper we aim to establish such an integral formula for spectral flow of a path of unitaries which are Schatten $p$-class perturbations of 
 the identity, $p\geq 1$. We are motivated by scattering theory where a path of scattring matrices is a Schatten norm continuous  path $S(\lambda)$ of $d$-Schatten unitaries depending on the spectral variable.
 
 The spectral flow along a path of unitaries is well-established. We utilise the definition and several features of known approaches to spectral flow for unitaries appearing in \cite{ADT,BBF,BLP,DSBW,KirLes}.  Other approaches, such as Pushnitski's \cite{Pu}, necessarily agree with ours but have quite different definitions. 
 
For the spectral flow of a differentiable loop $U_\bullet$ of unitaries which are trace-class perturbations of the identity, the following integral formula holds \cite{BLP,DSBW,KirLes}: 
\begin{equation}\label{eq:sf_trace_class_intro}
\textup{sf}(U_\bullet)=\frac{1}{2\pi i}\int_0^1\Tr(U_t^*\dot{U}_t)\,\d t,
\end{equation}
showing that the spectral flow can be computed via the integral of an exact one form. A refinement of the formula \eqref{eq:sf_trace_class_intro} for Schatten class unitaries was proposed in \cite{kellendonk12}, and the formula suggested there is what we ultimately obtain, and now describe.

In this paper (see Theorem \ref{thm:cayley-spec-flow-formula} below) we show that if $U_\bullet$ is a Schatten differentiable loop of unitaries, such that $U_t-\Id$ is in the Schatten $p$-class for some $p\geq 1$, then
 for $r\geq (p-1)/2$ or $n\geq p-1$ we have 
\begin{align}\label{eq:sf_Schatten_class_intro}
\begin{split}
\textup{sf}(U_\bullet)&=\frac{-i}{2^{2r+1}}\frac{\Gamma(r+1)}{\sqrt{\pi}\Gamma(r+1/2)}\int_0^1\Tr\Big(U_t^*\frac{\d}{\d t}(U_t)|U_t-{\rm Id}|^{2r}\Big)\,\d t\\
&=(-1)^n\frac{1}{2\pi i}\int_0^1\Tr\Big(U_t^*\frac{\d}{\d t}(U_t)(U_t-{\rm Id})^{n}\Big)\,\d t.
\end{split}
\end{align}
Here the integrand in both integrals is an exact differential form on the Banach-Lie group of unitaries of the form ${\rm Id}+$Schatten. Notably, Equation \eqref{eq:sf_Schatten_class_intro} holds for any loop of unitaries indexed by any interval (bounded or not) on the real line. For example, if  $U_\bullet$ is a differentiable loop of unitaries indexed by $[0,\infty)$, such that $U_s-\Id$ is in the Schatten $p$-class for some $p\geq 1$ for any $s\in [0,\infty)$, then we have 
\begin{align}\label{eq:sf_Schatten_class_intro_2}
\begin{split}
\textup{sf}(U_\bullet)&=\frac{-i}{2^{2r+1}}\frac{\Gamma(r+1)}{\sqrt{\pi}\Gamma(r+1/2)}\int_0^\infty\Tr\Big(U_s^*\frac{\d}{\d s}(U_s)|U_s-{\rm Id}|^{2r}\Big)\,\d s\\
&=(-1)^n\frac{1}{2\pi i}\int_0^\infty\Tr\Big(U_s^*\frac{\d}{\d s}(U_s)(U_s-{\rm Id})^{n}\Big)\,\d s,
\end{split}
\end{align}
provided that the integrand defines an integrable function on $[0,\infty)$.

We also present an alternative formula for the spectral flow of unitaries in terms of regularised determinants in Theorem \ref{thm:spec-flow-formula-det}, again first suggested in \cite{kellendonk12}. This states that for $p \in \N$ and $U_\bullet: [0,1] \to \mathcal{U}_p$ a differentiable loop of $p$-Schatten  unitaries we have
\begin{align*}
\textup{sf}(U_\bullet)&=\frac{1}{2\pi i}\int_0^1\frac{\frac{\d}{\d t}\textup{Det}_p(U_t)}{\textup{Det}_p(U_t)}\,\d t.\end{align*}
We extend these spectral flow formulae to non-closed paths. The usual procedure, which for self-adjoint Fredholm operators yields eta invariants of the end-points, depends on the choice of paths from the identity to the endpoints. While this dependence can not in general be removed, we use the known cut-locus of the unitary groups to identify a convention when the end-points are finite rank perturbations of the identity. This convention is natural and produces known answers in examples, as we demonstrate in the following section.

 By connecting the endpoints of a non-closed path to the identity via geodesic paths, we prove an analytic formula for the spectral flow along the path in terms of regularised winding number formula and end-point correction terms (see Section \ref{subsec:paths}).

In Section \ref{sec:Lev-det}
we apply our spectral flow results to Schr\"{o}dinger scattering systems. We formulate the celebrated theorem of Levinson in terms of spectral flow along the path of unitaries given by the scattering matrix.  Levinson's theorem relates the number of bound states of the  Schr\"{o}dinger operator $H=-\Delta+V$ to the scattering matrix $S(\lambda)$ of the pair $(-\Delta,-\Delta+V)$.
 
There have been many interpretations of Levinson's theorem, see e.g.  \cite{alexander24, ANRR, AR23-4D, bolle88, bolle77, levinson49}. Notably, for each energy level $\lambda\in [0,\infty)$,  the scattering matrix $S(\lambda)$ is a unitary operator on an appropriate Hilbert space,  so that $S(\bullet)$ is a path of unitaries indexed by $[0,\infty)$. Furthermore, under suitable assumptions on $V$, $S(\lambda)$  is a trace-class perturbation of the identity. However, the trace of $S^*(\lambda)S'(\lambda)$ is usually not integrable on $[0,\infty)$, making it impossible to apply the spectral flow formula \eqref{eq:sf_trace_class_intro}. However, as we show in Section \ref{sec:Lev-det} the spectral flow formulae \eqref{eq:sf_Schatten_class_intro_2}, may be applied in this situation. 

For nice potentials $V$, we show in all generality (see Theorem \ref{thm:sf_Levinson2} below) that 
\begin{align}
\textup{sf}(S(\bullet)) &=  \frac{1}{2\pi i} \int_0^\infty \left( \textup{Tr}(S(\lambda)^*S'(\lambda)) - p_n(\lambda) \right) \, \d \lambda - \frac{1}{2\pi i} P_n(0)= -N-N_{res} ,
\label{eq:intro-lev}
\end{align}
where $N$ is the number of eigenvalues of $H = -\Delta+V$ and $N_{res}$ is an integer correction arising in dimensions 2 and 4 only in the presence of zero energy resonances. In particular, \eqref{eq:intro-lev} holds in all dimensions, whether resonances are present or not.
 The polynomials  $P_n(\lambda),$ and $p_n(\lambda)$ are explicitly defined in term of the potential $V$. 
 

Given a loop of unitaries $U_\bullet$ on a Hilbert space $\H$,
we can apply the Cayley transform to this loop of unitaries, to obtain a path of unbounded operators. These paths, while well-behaved in many ways, have the disturbing feature that the operators at each point may fail to be densely-defined on $\H$. By completing the domains of definition, we obtain paths of self-adjoint operators on moving Hilbert spaces, where the standard definition of the spectral flow is not applicable.  Nevertheless, we can define a sensible notion of spectral flow for such paths, via the unitary spectral flow.

Our results for paths of operators encompass known cases of gap continuous varying boundary conditions, \cite{BLP,Wahl07}. To understand better what the abstract case of paths of self-adjoint operators on moving Hilbert spaces says, we formulate our results using Kasparov's bivariant K-theory. 

Kasparov's stabilisation theorem \cite{Ka1.5} shows that any Hilbert module $X$ over $C_0(\R)$ consists of a continuous field of Hilbert spaces $\{\H_t\}$ over $\R$, where each $\H_t$ is a closed subspace of a fixed Hilbert space $\H$. Given an unbounded self-adjoint regular operator $T$ on $X$, we characterise the continuity of the family $\{T_t\}$ on the continuous field $\{\H_t\}$, and when $\{T_t\}$ defines a Kasparov module. Our characterisation, found in Proposition \ref{prop:gap-cts-extension} and Theorem \ref{prop:KK-paths}, is in terms of continuous paths in a Banach manifold arising from the Banach manifold $\mathcal{U}_\infty(\H)$ of unitaries of the form ${\rm Id}+$compact via the Cayley transform.



The discussion of exact forms and our formulae for spectral flow are in Section \ref{sec:sf-su}. The application to scattering theory is in Section \ref{sec:Lev-det}, and the paths of unbounded operators are discussed in Section \ref{sec:fctns}.

{\bf Acknowledgements} All authors were supported by the ARC Discovery grant DP220101196, and thank Serge Richard for sharing his insights on Levinson's theorem. A.R. thanks the Institute for Advanced Research, Nagoya University, 
for support and hospitality during the production of this work.
%
%
%
%
%
%
%
%
%

\section{Spectral flow for Schatten unitaries}
\label{sec:sf-su}

Throughout the paper we denote by $\H$ a separable Hilbert space and the algebra of all bounded (respectively, compact) linear operators on $\H$ is denoted by $\B(\H)$ (respectively, by $\K(\H)$). The standard trace on the algebra $\mathcal{B}(\H)$ is denoted by $\Tr$.  For $p\geq 1$  let $\L^p(\H)$ be the Schatten $p$-class of compact operators $T$ on $\H$ satisfying $\Tr(|T|^p)<\infty$.

Our aim is to study the spectral flow of moderately smooth paths $(U_t)$ of $\Id+$Schatten unitaries through the point $-1$ in the spectrum. The issue has been addressed before, and a good introduction is provided in \cite{DSBW}.
We start with the definition of spectral flow for a path of self-adjoint (bounded) Fredholm operators due to Phillips \cite{Phi96,Phi97}.

Let  $\pi: \B(\H) \to \B(\H) \slash \K(\H)$ denote the quotient onto the Calkin algebra and let  $\chi = \chi_{[0,\infty)}$ be the characteristic function of the interval $[0,\infty)$. 

Assume that $(T_t), t\in[0,1],$ is  any norm continuous path of bounded self-adjoint Fredholm operators in $\mathcal{B}(\H)$. Then $\pi(\chi(T_t)) = \chi(\pi(T_t))$. Since the spectrum of the $\pi(T_t)$ are bounded away from zero, the path $\chi(\pi(T_t))$ is continuous. By compactness (and \cite[Lemma 4.1]{BCPRSW}), we can choose a partition $0 = t_0 < t_1 < \cdots < t_k  = 1$ such that 
\begin{align*}
\norm{\pi(\chi(T_{t})) - \pi(\chi(T_s))} &< \frac12
\end{align*} 
for all $t, s \in [t_{i-1}, t_{i}]$ and $0 \leq i \leq k$. Defining the projection $P_i = \chi(T_{t_i})$ we find that $P_{i-1}P_i: P_i \H \to P_{i-1} \H$ is Fredholm. The following is the definition due to Phillips \cite{Phi96, Phi97}.
\begin{defn} For $t \in [0,1]$ let $(T_t)$ be any norm continuous path of bounded self-adjoint Fredholm operators in $\B(\H)$.  For a partition $0 = t_0 < t_1 < \cdots < t_k = 1$ of the interval $[0,1]$ define the operators $P_i = \chi(T_{t_i})$. Then we define the \emph{spectral flow} of the path $(T_t)$ by
\begin{align*}
\textup{sf}(T_t) &:= \sum_{i=1}^k{\textup{Ind}(P_{i-1}P_i)}.
\end{align*}
\end{defn}
This definition of spectral flow is independent of the choice of partition \cite{LU, Phi96, Phi97} and agrees with the topological definition used in \cite{aps75, aps76} when both apply.

If $D_t$ is a path of densely-defined self-adjoint operators with
$t\mapsto F_t:=D_t(1+D_t^2)^{-1/2}$ operator norm continuous, then we say that the path $D_t$ is Riesz continuous. If the path of graph projections $P_{D_t}$ are norm continuous, we say that the path $D_t$ is gap continuous.
Gap continuity is equivalent to norm resolvent continuity, \cite[Theorem 1.1]{BLP}.

Spectral flow for Riesz continuous paths of unbounded self-adjoint Fredholm operators $D_t$ is defined as the spectral flow of the path of bounded transforms $F_t$ using Phillips' definition above.

In \cite[Section 2.1]{BLP} it is also shown that, via the Cayley transform, the spectral flow of a gap continuous path $\{D_t\}$ of unbounded self-adjoint Fredholm operators can be defined as the winding number of the associated path of unitaries $t\mapsto (D_t+i)(D_t-i)^{-1}$. For a gap continuous path, \cite[Definition 2.12]{BLP} shows that Phillips' definition still applies.

\subsection{Spectral flow of unitaries}
Now we move on to an analytical definition of spectral flow for a path of unitaries. We follow the discussions of \cite{BBF,BLP,DSBW,KirLes} where the spectral flow of unitaries and the relation to spectral flow of closed unbounded Fredholm operators is presented. Pushnitski's treatment, \cite{Pu}, also deals  with the Schatten class unitaries that we consider, but having different aims develops a quite different approach. In particular, Pushnitski deals with the discontinuity of integer-valued dimension-type functions on the circle by building a space of such functions, together with a natural covering space. By defining spectral flow in terms of lifts of maps to the covering space, Pushnitski obtains homotopy invariance. 

Our approach will be closer to the ``naive'' definition that counts the net number of eigenvalues passing through $-1$ as we traverse a loop of unitaries. This approach can be made rigorous by a Phillips-style definition presented in \cite{BBF} and \cite[Proposition 2.1]{BLP}, which is our starting point. We refer to \cite[Section 2]{BLP} and \cite{KirLes} for more discussion and additional references.

Our basic definitions will make sense for the ``Fredholm unitaries'', see \cite[Corollary 1.8]{BLP} and \cite[Section 6.1]{KirLes}, namely those Hilbert space unitaries for which $-1$ is not contained in the essential spectrum. The Fredholm unitaries do not form a group, and its elements do not have sufficiently nice behaviour for us to define our differential forms.
It is shown in \cite[Lemma 6.1]{KirLes} and \cite[Proposition 3.7.2]{DSBW} that the unitaries of the form ${\rm Id}+K$ with $K$ compact are a deformation retract of the Fredholm unitaries. 

To define one-forms on a Banach manifold of unitaries, we will require $p$-Schatten class compacts, which we introduce next. We note that these  are again homotopy equivalent to the unitaries of the form ${\rm Id}+K$, \cite{dlH}.

\begin{defn}Let $p\geq 1$. We say a unitary $U$ on $\H$ is \emph{$p$-Schatten} if $U=\Id+T$ for some  $T\in \L^p(\H)$. We say that a path $[0,1]\ni t\mapsto U_t$ of $p$-Schatten unitaries is piecewise \emph{differentiable} (or  piecewise $C^1$) if the path is continuously differentiable at all but finitely many points $t\in(0,1)$ in the Schatten norm $\|\cdot\|_p$ and is continuous on the whole interval $[0,1]$ in the Schatten norm $\|\cdot\|_p$. We denote by $\mathcal{U}_p(\H)$ the set of Schatten unitaries on $\H$ with the topology coming from the Schatten norm $\Vert\cdot\Vert_p$.
\end{defn}

For the remainder of this section we fix $p\geq 1$.

It is well-known that $\mathcal{U}_p(\H)$
is a Banach-Lie group with Lie algebra the skew-adjoint subspace of $\L^p(\H)$, \cite{dlH}. Similarly, the space  $\mathcal{U}_\infty(\H)$ of unitaries of the form ${\rm Id}+X,\ X\in \K(\H)$ with the norm topology is a Banach-Lie group.

%

Let $U_t : [0, 1]\to \mathcal{U}_p$ be a continuous path. By \cite[Proposition 2.1]{BLP} there exists a partition $\{0 = t_0 < t_1 <\cdots < t_n = 1\}$ of the interval and positive real
numbers $0 < \epsilon_j < \pi$, $j = 1, \dots , n$, such that $\ker(U_t- e^{i(\pi\pm\epsilon_j )}) = \{0\}$ for $t_{j-1} \leq t \leq t_j$.
Introduce the notation 
\[
k(t, \epsilon_j ) :=\sum_{0\leq\theta<\epsilon_j}
\dim \ker(U_t- e^{i(\pi+\theta)}).
\]
The sum defining $k(t, \epsilon_j )$ is necessarily finite.
\begin{defn}
\label{defn:BLP-SF}
Let $U_t : [0, 1]\to \mathcal{U}_p$ be a continuous path. For a finite partition $\{0 = t_0 < t_1 <\cdots < t_n = 1\}$ of the interval and positive real
numbers $0 < \epsilon_j < \pi$, $j = 1, \dots , n$, such that $\ker(U_t- e^{i(\pi\pm\epsilon_j )}) = \{0\}$ for $t_{j-1} \leq t \leq t_j$, define the \emph{spectral flow} for $U_\bullet$ by setting 
\[
\textup{sf}(U_\bullet)=\textup{wind}(U_\bullet ) =
\sum^n_{j=1}
k(t_j , \epsilon_j )- k(t_{j-1}, \epsilon_j ),
\]
\end{defn}
As shown in \cite[Proposition 2.1]{BLP} this definition is independent of the choice of the partition of the interval and of the choice of the $\epsilon_j$.
In \cite{BLP,DSBW,KirLes}, they recall the formula for the winding number on $\mathcal{U}_1(\H)$. For a differentiable loop $U_\bullet\in\mathcal{U}_1(\H)$ based at ${\rm Id}_\H$, we have
\[
\textup{wind}(U_\bullet)=\frac{1}{2\pi i}\int_0^1\Tr(U_t^*\dot{U}_t)\,\d t\in\Z.
\]
Our aim in this section is to extend this formula to all $\mathcal{U}_p(\H)$ with $1\leq p<\infty$.

\subsection{Exact one-forms}
\label{subsec:exact-forms}

 To prove the integral formula \eqref{eq:sf_Schatten_class_intro}, we firstly introduce two different one-forms and show that they are exact. In the following, we fix the Hilbert space $\H$ and write $\mathcal{U}_p=\mathcal{U}_p(\H)$ and similarly for Schatten ideals.

The tangent space $T_{\rm Id}\mathcal{U}_p$ to the identity in $\mathcal{U}_p$ is the skew-adjoint elements of $\L^p$. Given two tangent vectors $X,Y\in  T_{\rm Id}\mathcal{U}_p$ the Lie bracket is given by  $(X,Y)\mapsto [X,Y]$ where $[X,Y]=XY-YX$ is the standard commutator. 
The exponential map  $\exp: T_{\rm Id}\mathcal{U}_p\to \mathcal{U}_p, X\mapsto \exp(X)$ from the tangent space $ T_{\rm Id}\mathcal{U}_p$,  is onto and a local diffeomorphism, \cite[Proposition II.15]{dlH}.

The exponential map takes tangent vectors $X\in T_{\rm Id}\mathcal{U}_p$ to  curves through an arbitrary point $U\in \mathcal{U}_p$ with tangent vector at $U$ a translation of $X$ to the point  $U$,  so
\[
\exp(sX)\cdot U=Ue^{sX},\quad U\in \mathcal{U}_p,\quad X=-X^*\in \L^p.
\]

Now we define the two one-forms on
 the 
tangent space to  $\mathcal{U}_p$ at the point  $U\in\mathcal{U}_p.$

\begin{defn}
\label{defn:one-forms}
For $x\geq 0$ let 
\[
C_x=\frac{\Gamma(x+1)}{\sqrt{\pi}\Gamma(x+1/2)}.
\]
Let $n$ be an integer $n\geq p-1$ and let $r$ be real such that  $r\geq (p-1)/2$. Define one-forms on  
tangent vectors $X\in T_U\mathcal{U}_p$ at the point  $U\in\mathcal{U}_p$ by 
\[
\alpha_{U,n}(X):=
(-1)^n\frac{1}{2\pi i}\Tr(X(U-\Id)^n)
\]
and 
\[
\beta_{U,r}(X):=
-iC_r\Big(\frac{1}{2}\Big)^{2r+1}\Tr(X|U-\Id|^{2r}).
\]
\end{defn}

\begin{rmk}
We note that since $U\in \mathcal{U}_p$, the H\"older inequality implies that $(U-\Id)^n\in \L^{\frac{p}{n}}$ and $|U-\Id|^{2r}\in \L^{\frac{p}{2r}}$. Since $X\in \L^p$ and $n\in\mathbb{N}, r\in\mathbb{R}$ are chosen such that $n\geq p-1$ and $r\geq (p-1)/2$, another application of the H\"older inequality implies that both operators under the trace in Definition \ref{defn:one-forms} are trace-class operators, so that both one-forms are well-defined. In what follows, we will repeatedly use this argument without further reference. 
\end{rmk}

In the following two lemmas we show that the forms $\alpha_{U,n}$  and $\beta_{U,n}$ are  exact.  We start with the one-form $\alpha_{U,n}$.

\begin{lemma}
\label{lem:exctness}
For any $U\in \mathcal{U}_p$ and $n\in\mathbb{N}$ such that $n\geq p-1$ the one-form $\alpha_{U,n}$  is  exact.
\end{lemma}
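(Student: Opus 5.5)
The plan is to exhibit an explicit primitive $\Phi_n$ for $\alpha_{\cdot,n}$, given by spectral calculus, and then verify $d\Phi_n=\alpha_{\cdot,n}$ by differentiating along arbitrary $C^1$ paths. Throughout, a tangent vector at $U$ is written $X=U^*\dot U$, where $\dot U$ is the velocity of a curve through $U$. This is the convention fixed by $\exp(sX)\cdot U=Ue^{sX}$, so that $\alpha_{U,n}(X)=(-1)^n\frac{1}{2\pi i}\Tr(U^*\dot U(U-\Id)^n)$.

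\textbf{Step 1: the scalar primitive.} Set $g_n(z)=(z-1)^n/z$ and
\[
G_n(z)=\int_1^z \frac{(w-1)^n}{w}\,\d w .
\]
Long division gives $\frac{(z-1)^n}{z}=\sum_{k=0}^{n-1}(-1)^k(z-1)^{n-1-k}+\frac{(-1)^n}{z}$. Hence $G_n(z)=Q_n(z)+(-1)^n\Log z$, with $Q_n$ a polynomial satisfying $Q_n(1)=0$. We take the principal branch of $\Log$ on $\Sf^1$, that is, arguments in $(-\pi,\pi]$.

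The essential property is the local behaviour at $1$. Since $g_n(z)=(z-1)^n(1+O(z-1))$, we have $|G_n(e^{i\theta})|\le c\,|e^{i\theta}-1|^{n+1}$ uniformly on the circle, because $G_n$ is bounded on $\Sf^1$. As $n+1\ge p$ and $U-\Id\in\L^p$, the operator $G_n(U)=h(U)\,|U-\Id|^{n+1}$ has $h$ bounded Borel. It is therefore trace class. So
\[
\Phi_n(U):=(-1)^n\frac{1}{2\pi i}\Tr\big(G_n(U)\big)
\]
is a well-defined function on $\mathcal{U}_p$.

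\textbf{Step 2: differentiation along paths.} For a $C^1$ path $U_t$ in $\mathcal{U}_p$ I would prove
\[
\frac{\d}{\d t}\Tr(G_n(U_t))=\Tr\big(g_n(U_t)\dot U_t\big)=\Tr\big(U_t^*\dot U_t(U_t-\Id)^n\big),
\]
which gives $d\Phi_n=\alpha_{\cdot,n}$. I would split $G_n$ into its two pieces.

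For the polynomial piece $Q_n$, I would expand $Q_n(z)$ in powers of $(z-1)$. Differentiating monomials $(U_t-\Id)^m$ by the Leibniz rule and using cyclicity of the trace (justified by H\"older) gives $\Tr(Q_n'(U_t)\dot U_t)$ for the terms with $m\ge p$. For the low-degree terms, I would combine them with the $\Log$ piece before taking traces, so that only trace-class combinations are ever traced.

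Concretely, I would write $G_n=\lim_{N} G_n^{(N)}$ with the $G_n^{(N)}$ polynomials in $z,\bar z=z^{-1}$ that vanish to order $n+1$ at $1$. These converge in the weighted sense $|G_n-G_n^{(N)}|\le \eps_N|z-1|^{n+1}$, and the derivatives converge similarly. For each Laurent polynomial the identity follows from Leibniz and cyclicity. One then passes to the limit, using $\|(U_t-\Id)\|_p$-bounds, on any interval where $-1$ does not enter the spectrum discontinuously.

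\textbf{Step 3, the main obstacle: the branch point $-1$.} At $-1$ the function $G_n$ jumps by $(-1)^n2\pi i$ times an integer count of eigenvalues. Since $-1$ is at most an isolated eigenvalue of finite multiplicity for $U\in\mathcal{U}_p$, $\Tr(G_n(U))$ is smooth away from the set where eigenvalues cross $-1$. Across such a crossing it changes by $(-1)^n 2\pi i$ times the multiplicity crossing, so $\Phi_n$ jumps by an integer. The plan is to show that $\Phi_n$ is a genuine primitive modulo $\Z$, i.e.\ $\Phi_n$ is smooth as a map $\mathcal{U}_p\to\C/\Z$, and that $\alpha_{\cdot,n}=d\Phi_n$ in this sense. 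This is the sense of exactness needed for the spectral flow formula.

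To handle the crossing I would use the partition of Definition \ref{defn:BLP-SF}. On each $[t_{j-1},t_j]$, I would replace the cut at $-1$ by a cut at $e^{i(\pi\pm\eps_j)}$, which no eigenvalue meets. With this cut the formula of Step 2 holds verbatim, and the two choices of cut differ by exactly $k(t,\eps_j)$. I expect that verifying the uniform Schatten estimates for the moved-cut primitive will be the delicate part.
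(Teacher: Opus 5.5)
Your approach is sound and takes a genuinely different route from the paper, although it produces the same primitive.

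\textbf{The paper's route.} It fixes a logarithm $Y$ of $U_1$ and defines $\Theta(U_1)$ as the integral of $\alpha$ along $e^{tY}$. It then differentiates by varying the right-translated curve $U_1e^{sX}e^{(t-1)Y}$. The Maurer--Cartan factor is independent of $s$, the Leibniz sum is recognised as $\frac{\d}{\d t}(U_t-\Id)^n$, and integrating in $t$ leaves $\Tr(X(U_1-\Id)^n)$. Diagonalising $Y$ and substituting $w=e^{ity}$ gives $\int_0^1 iy(e^{ity}-1)^n\,\d t=\int_1^{e^{iy}}(w-1)^nw^{-1}\,\d w$. So the paper's $\Theta$, taken with the principal logarithm, is exactly your $\Phi_n$; you evaluate it in closed form and differentiate $\Tr(G_n(U_t))$ by functional calculus instead.

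\textbf{What your route buys.}
(i) Your primitive depends on $U$ alone, so its derivative along any $C^1$ curve needs no comparison of integrals over different curves. The paper instead equates $\Theta(U_1e^{sX})$ with the integral along $U_1e^{sX}e^{(t-1)Y}$, a curve starting at $e^{sU_1XU_1^*}$ rather than at $\Id$. That step tacitly uses the path-independence being proved, and for $n=0$ the computation visibly loses the term $\frac{1}{2\pi i}\Tr X$.
(ii) You make precise what exactness can mean. By Theorem \ref{thm:cayley-spec-flow-formula} the loop \eqref{eq:normilising_loop} has $\int\alpha=k\neq0$, so no $\C$-valued primitive exists on $\mathcal{U}_p$. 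The correct statement is yours: $\Phi_n$ is smooth into $\C/\Z$, with integer jumps at crossings of $-1$. The paper only alludes to this through the $Y$-dependence of $\Theta$.
(iii) Your Step 3 bookkeeping, telescoped over the partition of Definition \ref{defn:BLP-SF}, proves the $\alpha$-version of Theorem \ref{thm:cayley-spec-flow-formula} directly, without the homotopy classification and normalisation.

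\textbf{What the paper's route buys.} It is a short algebraic computation with no approximation theory or branch analysis. Its template also carries over to $\beta_{U,r}$ via the integral formula for fractional powers. There, for non-integer $r$, your weighted polynomial approximation would need rethinking, since $|z-1|^{2r}$ is not smooth at $z=1$.

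\textbf{One point to fix in Step 2.} No Laurent polynomials can approximate $G_n$ in your weighted sense on all of $\Sf^1$, because $G_n$ jumps at $-1$, where the weight equals $2^{n+1}$. The approximation must come after the localisation of Step 3; it is then routine rather than delicate.
On $[t_{j-1},t_j]$ the spectra of the $U_t$ stay a fixed distance from $e^{i(\pi+\eps_j)}$, since spectra of normal operators move norm-continuously. So the moved-cut primitive can be replaced by some $F=(z-1)^{n+1}k$ with $k\in C^\infty(\Sf^1)$, agreeing with it near all these spectra. Approximate $k$ in $C^1(\Sf^1)$ by trigonometric polynomials. The bounds $\|(U-\Id)^{n+1}\|_1\le\|U-\Id\|_p^{n+1}$ and $\|(U-\Id)^n h(U)\dot U\|_1\le\|h\|_\infty\|U-\Id\|_p^n\|\dot U\|_p$ then give the uniform estimates on the whole interval.

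A minor bookkeeping point: with the $(-\pi,\pi]$ convention, the moved-cut primitive differs from $\Phi_n$ by $k(t,\eps_j)-\dim\ker(U_t+\Id)$, not by $k(t,\eps_j)$. The extra term telescopes to zero over a loop, so nothing downstream changes.
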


\begin{proof}

Let $U_1\in \mathcal{U}_p$ and and let $Y=-Y^*\in\L^p$ be such that $U_1=e^Y$. Setting $U_t= e^{tY}$. $t\in[0,1]$, we define the function $\Theta:\mathcal{U}_p\to \mathbb{C}$ by setting
\begin{equation}
\Theta(U_1)=(-1)^n\frac{1}{2\pi i}\int_0^1\Tr(Y(U_t-\Id)^{n})\,\d t.
\label{eq:they-called-him-Theta}
\end{equation}
By the H\"older inequality, the function $t\mapsto Y(U_t-\Id)^n$, $t\in[0,1]$, is a continuous  trace-class valued function, and so the integral above is well-defined.

We claim that $d\Theta_{U_1}(X)=\alpha_{U_1,n}(X)$ for any $X\in T_{U_1}\mathcal{U}_p$. 
We firstly note that since $Y=U_t^*\frac{\d U_t}{\d t}$, we can rewrite the definition of $\Theta$ as follows:
\[
\Theta(U_1)=(-1)^n\frac{1}{2\pi i}\int_0^1\Tr\left(U_t^*\frac{\d U_t}{\d t}(U_t-\Id)^{n}\right)\,\d t.
\]

To compute differentials at the point $U_1\in\mathcal{U}_p$, we first compute that 
\[
\Theta(U_1e^{sX})=(-1)^n\frac{1}{2\pi i}\int_0^1\Tr\Big(e^{-(t-1)Y}e^{-sX}U_1^*\frac{\d (U_1e^{sX}e^{(t-1)Y})}{\d t}(U_1e^{sX}e^{(t-1)Y}-\Id)^{n}\Big)\,\d t.
\]
Noting that 
\[
\frac{\d}{\d s}\Big|_{s=0}U_1e^{sX}e^{(t-1)Y}=U_1Xe^{(t-1)Y}=U_1Xe^{-Y}U_0^*U_0e^{tY}=U_1XU_1^*U_t
\]
and
\[
\frac{\d}{\d s}\Big|_{s=0}e^{-(t-1)Y}e^{-sX}U_1^*=-U_t^*U_1XU_1^*
\]
we infer that 
\[
\frac{\d}{\d s}\Big|_{s=0}e^{-(t-1)Y}e^{-sX}U_1^*\frac{\d(U_1e^{sX}e^{(t-1)Y})}{\d t}
=U_t^*U_1XU_1^*\frac{\d U_t}{\d t}-U_t^*U_1XU_1^*\frac{\d U_t}{\d t}=0.
\]

Since, in addition, we have that 
$$e^{-(t-1)Y}e^{-sX}U_1^*\frac{\d (U_1e^{sX}e^{(t-1)Y})}{\d t}\Big|_{s=0}=Y,$$
we obtain that 
$$
(-1)^n2\pi i\frac{\d}{\d s}\Big|_{s=0}\Theta(U_1e^{sX})
=\int_0^1\Tr\Big(Y\frac{\d}{\d s}\Big|_{s=0}(U_1e^{sX}e^{(t-1)Y}-\Id)^{n}\Big)\,\d t.$$

Writing
\begin{align*}
\frac{\d}{\d s}\Big|_{s=0}&(U_1e^{sX}e^{(t-1)Y}-1)^{n})\\
&=\sum_{k=0}^{n-1} (U_1e^{(t-1)Y}-\Id)^k \Big(\frac{\d}{\d s}\Big|_{s=0}(U_1e^{sX}e^{(t-1)Y}-\Id)\Big)(U_1e^{(t-1)Y}-\Id)^{n-k-1}\\
&=\sum_{k=0}^{n-1}(U_t-\Id)^{k}U_1XU_1^*U_t(U_t-\Id)^{n-k-1},
\end{align*}
we find that 
$$(-1)^n2\pi i\frac{\d}{\d s}\Big|_{s=0}\Theta(U_1e^{sX})=\int_0^1\Tr\Big(Y\sum_{k=0}^{n-1}(U_t-\Id)^{k}U_1XU_1^*U_t(U_t-\Id)^{n-k-1}\Big)\,\d t.$$
Using repeatedly the cyclicity of the trace and unitarity of $U_t$ we conclude that 
\begin{align*}
(-1)^n2\pi i\frac{\d}{\d s}\Big|_{s=0}\Theta(U_1e^{sX})
&=\int_0^1\Tr\Big(U_1XU_1^*\sum_{k=0}^{n-1}(U_t-\Id)^{k}U_tY(U_t-\Id)^{n-k-1}\Big)\,\d t\\
&=\int_0^1\Tr\Big(U_1XU_1^*\frac{\d}{\d t}\big(U_t-\Id)^{n}\big)\Big)\,\d t\\
&=\Tr(U_1XU_1^*(U_1-\Id)^{n})
-\Tr(U_1XU_1^*(U_0-\Id)^{n}))\\
&=\Tr(X(U_1-\Id)^{n})\\
&=(-1)^n2\pi i\,\alpha_{U_1,n}(X),
\end{align*}
proving the claim that $d\Theta_{U_1}(X)=\alpha_{U_1,n}(X)$, and thus showing that the one-form $\alpha_{U,n}$ is exact. 
%
\end{proof}

Next we examine the one-forms $\beta_{U,r}$ with $r$ real. The proof of exactness for these forms requires the spectral theorem to handle non-integer powers.

\begin{lemma}
\label{lem:exactness}
For any $U\in \mathcal{U}_p$ and $r\in\mathbb{R}$ such that $r\geq (p-1)/2$, the one-form $\beta_{U,r}$ on $\mathcal{U}_p$ is exact.
\end{lemma}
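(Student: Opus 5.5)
I will follow the template of Lemma \ref{lem:exctness}: construct an explicit primitive by integrating along exponential paths and then verify that its differential is $\beta_{U,r}$. Fix $U_1\in\mathcal{U}_p$ and write $U_1=e^Y$ with $Y=-Y^*\in\L^p$. This is possible since $\exp$ is onto by \cite[Proposition II.15]{dlH}. Put $U_t=e^{tY}$ and define
\[
\Psi(U_1)=-iC_r\Big(\frac12\Big)^{2r+1}\int_0^1\Tr\big(Y|U_t-\Id|^{2r}\big)\,\d t .
\]
The integrand is trace class and continuous in $t$ by H\"older, because $|U_t-\Id|^{2r}\in\L^{p/2r}$ and $r\ge (p-1)/2$.

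A cleaner route is to reduce everything to the spectral calculus on the circle. Since $U$ is normal, $|U-\Id|^{2r}=f(U)$, where $f(e^{i\theta})=|e^{i\theta}-1|^{2r}=(2-2\cos\theta)^r$. We then seek $\Psi$ of the form $\Psi(U)=\Tr(F(U))$ for a scalar function $F$ on the circle with $F(1)=0$. The candidate is obtained by requiring $\frac{\d}{\d\theta}F(e^{i\theta})=-iC_r2^{-2r-1}f(e^{i\theta})\cdot i$, that is,
\[
F(e^{i\theta})=C_r2^{-2r-1}\int_0^\theta (2-2\cos\phi)^r\,\d\phi ,\qquad \theta\in(-\pi,\pi].
\]
This is well defined near $\theta=0$ and vanishes there to order $2r+1\ge p$, so $F(U)\in\L^1$.

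\textbf{Step 1.} At the point $\theta=\pi$ this $F$ may be discontinuous. The normalisation $C_r2^{-2r-1}$ should be precisely what makes the total integral $\int_{-\pi}^{\pi}$ equal to $1$, using $\int_0^{\pi}\sin^{2r}(\phi/2)\,\d\phi=\sqrt{\pi}\,\Gamma(r+1/2)/\Gamma(r+1)$. Consequently $F$ jumps by an integer there, and $\Tr F(U)$ jumps by an integer multiple of $\dim\ker(U+\Id)$. I therefore expect the primitive to be exact only modulo $\Z$, or exact in the sense the paper intends via the path-integral definition $\Psi$. I would present the primitive as $\Psi$ above, computed along the specific geodesic $e^{tY}$, so that the cut is handled by the choice of logarithm $Y$, exactly as in the proof of Lemma \ref{lem:exctness}.

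\textbf{Step 2.} This is the differential computation. As in Lemma \ref{lem:exctness}, I would compute $\frac{\d}{\d s}|_{s=0}\Psi(U_1e^{sX})$, using the path $U_1e^{sX}e^{(t-1)Y}$. The part of the derivative coming from $Y$ vanishes by the same cancellation as before. What remains is
\[
\int_0^1\Tr\Big(Y\,\frac{\d}{\d s}\Big|_{s=0}f\big(U_1e^{sX}e^{(t-1)Y}\big)\Big)\d t .
\]
For non-integer $r$ there is no finite product expansion, so I would replace it with the trace identity $\Tr(A\,Df(U)[B])=\Tr(f'(U)AB)$, valid when $A$ commutes with $U$. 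This can be proved via double operator integrals, or by approximating $f$ by trigonometric polynomials (for which it is the expansion used in Lemma \ref{lem:exctness}) and passing to the limit in $\L^1$. Here $Y$ commutes with $U_t$, so the integrand becomes
\[
\Tr\big(U_1XU_1^*\,\tfrac{\d}{\d t}f(U_t)\big).
\]
Integrating in $t$ gives $\Tr\big(U_1XU_1^*(f(U_1)-f(\Id))\big)=\Tr(Xf(U_1))$, which yields $d\Psi_{U_1}=\beta_{U_1,r}$.

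The main obstacle is the justification of this trace identity for the non-smooth function $f$, since $(2-2\cos\theta)^r$ is not smooth at $\theta=0$ when $r$ is not an integer. The difficulty is to do so with only Schatten-$p$ control, that is, to ensure the approximation converges in trace norm after multiplication by $Y\in\L^p$. I would first try approximating $f$ by $f_\epsilon(e^{i\theta})=(2-2\cos\theta+\epsilon)^r-\epsilon^r$, which is smooth on the circle and vanishes at $\theta=0$. The required convergence would then follow from the dominated bound $|f_\epsilon|\lesssim|e^{i\theta}-1|^{2r}$ together with H\"older.
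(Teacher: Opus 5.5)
Your outline is the paper's: $\Psi$ is the paper's $\Xi$ multiplied by $-iC_r2^{-2r-1}$. The variation along $\gamma_s(t)=U_1e^{sX}e^{(t-1)Y}$, with the $Y$-factor treated as constant, is exactly how the paper computes $\d\Xi$.

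Your Step 1 is correct, and in fact sharper than the statement. The jump of $F$ across $\theta=\pi$ is exactly $1$; the paper's own computation $B_r(U^{(k)}_\bullet)=k$ shows the same thing. So $\beta_{\bullet,r}$ is closed with integer periods, and ``exact'' can only mean that the $Y$-dependent $\Xi$ is a local primitive, as the remark after the lemma concedes.

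Where you genuinely depart from the paper is the analytic core. You invoke a general identity $\Tr(A\,Df(U)[B])=\Tr(f'(U)AB)$ for $[A,U]=0$. This makes the telescoping to $\Tr(U_1XU_1^*\frac{\d}{\d t}f(U_t))$ transparent and shows that the particular $f$ is irrelevant. The paper instead writes $r=n+x$ and represents $|U_t-\Id|^{2x}$ by $\frac{\sin x\pi}{\pi}\int_0^\infty\lambda^{-x}(1+\lambda T)^{-1}T\,\d\lambda$ with $T=|U_t-\Id|^2$. It then differentiates the resolvents explicitly and closes with an integration by parts in $t$, so every object is an explicit product of Schatten-class factors.

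The gap is that you never establish this identity in the Schatten setting you need, and the approximation you propose fails once $r>1$.
\begin{itemize}
\item With $a=|e^{i\theta}-1|^2$, you have $f_\eps=(a+\eps)^r-\eps^r\approx r\eps^{r-1}a$ for $a\ll\eps$. At $a=\eps^2$ this gives $f_\eps/a^r\approx r\eps^{1-r}\to\infty$, so the bound $|f_\eps|\lesssim a^r$ is false. It holds only for $r\le 1$, by subadditivity of $t\mapsto t^r$.
\item Worse, $f_\eps$ vanishes only to second order at $\theta=0$, so $Yf_\eps(U_t)$ is in general only in $\L^{p/3}$. For $p>3$, exactly the regime that forces $r>1$, your approximating primitives are not even defined.
\item Uniform approximation by trigonometric polynomials has the same defect: the approximants need not vanish to order $p-1$ at $1$, so their products with $Y$ need not be trace class.
\item The double-operator-integral route requires showing that $s\mapsto f(\gamma_s(t))$ is differentiable into $\L^{p/2r}$, although the direction $U_1XU_1^*U_t$ is only in $\L^p$. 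That gain must come from the vanishing of $f$ at $1$, and you do not supply it.
\end{itemize}

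The missing idea is the paper's splitting $|U-\Id|^{2r}=(2\Id-U-U^*)^n|U-\Id|^{2x}$ with $0\le x<1$. The Laurent-polynomial factor is handled by the finite Leibniz rule, using
$\frac{\d}{\d s}\big|_{s=0}(2\Id-\gamma_s-\gamma_s^*)=-U_1XU_1^*(U_t-\Id)+(U_t^*-\Id)U_1XU_1^*\in\L^{p/2}$.
Only the fractional factor then needs analysis. You can use the resolvent integral as the paper does, or your own device applied to that factor alone: $f_\eps=a^n((a+\eps)^x-\eps^x)$ for $0<x<1$ is smooth and satisfies $0\le f_\eps\le a^r$.

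Finally, your endpoint step $f(\Id)=0$ and the replacement of the geodesic by $\gamma_s$ both need $r>0$. For $r=0$ and $p=1$, the path $\gamma_s$ starts at $e^{sU_1XU_1^*}\neq\Id$, and the missing segment contributes $\frac{s}{2\pi i}\Tr X$. Your computation therefore returns $0$ instead of $\beta_{U_1,0}(X)$; treat that case directly via $\Psi(U_1)=\frac{1}{2\pi i}\Tr Y$.
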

\begin{proof}

Let $n$ be the integer part of $r$ so $r=n+x$ with $0\leq x<1$.  For $U_t=e^{tY}$, $t\in[0,1]$, define $\Xi:\mathcal{U}_p\to\C$ at the point $U_1$ by
\begin{equation}
\Xi(U_1)=\int_0^1\Tr(Y|U_t-\Id|^{2r})\,\d t=\int_0^1\Tr(Y|U_t-\Id|^{2n}|U_t-\Id|^{2x})\,\d t.
\label{eq:they-called-him-Xi}
\end{equation}
By the H\"older inequality, the function $t\mapsto Y|U_t-\Id|^{2r}$, $t\in[0,1]$, is a continuous  trace-class valued function, and so the integral above is well-defined.

Now, for $x\neq 0$, by the integral formula (see e.g. \cite[1.3.7]{ped})
$$T^{-\alpha}=\frac{\sin(\alpha\pi)}{\pi}\int_0^\infty\lambda^{-\alpha}(1+\lambda T)^{-1}T\,\d\lambda, \quad 0<\alpha<1, 0\leq T\in \B(\H),$$
 we can write 
\begin{equation}
|U_t-\Id|^{2x}=\frac{\sin(x\pi)}{\pi}\int_0^\infty\lambda^{-x}(1+\lambda|U_t-\Id|^2)^{-1}|U_t-\Id|^2\,\d\lambda.
\label{eq:ped}
\end{equation}
As the integral is absolutely convergent in operator norm with both the integrand and the value of the integral $|U_t-\Id|^{2x}\in\L^{p/2x}$ and $Y|U_t-\Id|^{2n}|U_t-\Id|^{2x}\in\L^1$, we may interchange the trace and integral to find that
\[
\Xi(U_1)=\frac{\sin(x\pi)}{\pi}\int_0^1\int_0^\infty\lambda^{-x}\Tr(Y|U_t-\Id|^{2n}(1+\lambda|U_t-\Id|^2)^{-1}|U_t-\Id|^2)\,\d\lambda\,\d t.
\]
We now compute the exterior derivative of $\Xi$ at $U_1$. As before, for $X\in T_{U_1}\mathcal{U}_p$ we have
\[
\d \Xi_{U_1}(X)=\frac{\d}{\d s}\Big|_{s=0}\Xi(U_1e^{sX}).
\]
We recall that
\begin{align*}
\frac{\d}{\d s}\Big|_{s=0}|U_1e^{sX}e^{(t-1)Y}-\Id|^2&=\frac{\d}{\d s}\Big|_{s=0}(2-U_1e^{sX}e^{(t-1)Y}-e^{-(t-1)Y}e^{-sX}U_1^*)\\
&=-(U_1Xe^{(t-1)Y}-e^{-(t-1)Y}XU_1^*)=-(U_1XU_1^*U_t-U_t^*U_1XU_1^*),
\end{align*}
and inspection shows that the derivative lies in $\L^{p}$.
The remaining elements needed to compute the differential are then
\begin{align*}
&\frac{\d}{\d s}\Big|_{s=0}|U_1e^{sX}e^{(t-1)Y}-\Id|^{2n}\\
&\qquad=-\sum_{k=0}^{n-1}|U_t-\Id|^{2k}(U_1XU_1^*U_t-U_t^*U_1XU_1^*)|U_t-\Id|^{2(n-k-1)}
\end{align*}
and 
\begin{align*}
&\frac{\d}{\d s}\Big|_{s=0}(1+\lambda|U_1e^{sX}e^{(t-1)Y}-\Id|^2)^{-1}\\
&\quad=\lim_{s\to 0}\frac{1}{s}(1+\lambda|U_1e^{sX}e^{(t-1)Y}-\Id|^2)^{-1}\Big(\lambda|U_t-\Id|^2-\lambda|U_1e^{sX}e^{(t-1)Y}-\Id|^2\Big)(1+\lambda|U_t-\Id|^2)^{-1}\\
&\quad=\lambda(1+\lambda|U_t-\Id|^2)^{-1}\big(U_1XU_1^*U_t-U_t^*U_1XU_1^*\big)(1+\lambda|U_t-\Id|^2)^{-1}.
\end{align*}
Thus, we have that
\begin{align*}
&\d \Xi_{U_1}(X)=\frac{\d}{\d s}\Big|_{s=0}\Xi(U_1e^{sX})\\
&=-\frac{\sin(x\pi)}{\pi}\int_0^1\int_0^\infty\lambda^{-x}\Tr\Bigg(Y|U_t-\Id|^{2n}(1+\lambda|U_t-\Id|^2)^{-1}(U_1XU_1^*U_t-U_t^*U_1XU_1^*))\\
&-Y\sum_{k=0}^{n-1}|U_t-\Id|^{2k}(U_1XU_1^*U_t-U_t^*U_1XU_1^*)|U_t-\Id|^{2(n-k-1)}(1+\lambda|U_t-\Id|^2)^{-1}|U_t-\Id|^2)\\
&+Y|U_t-\Id|^{2n}\lambda(1+\lambda|U_t-\Id|^2)^{-1}\big(U_1XU_1^*U_t-U_t^*U_1XU_1^*\big)(1+\lambda|U_t-\Id|^2)^{-1}|U_t-\Id|^2)\Bigg)\,\d\lambda\,\d t.
%
\end{align*}
As $U_t$ is normal and $YU_t=U_tY$, we can rearrange these expressions using cyclicity of the trace to find
\begin{align*}
&\d \Xi_{U_1}(X)
=-\int_0^1\Tr(Y|U_t-\Id|^{2n+2x-2}(U_1XU_1^*U_t-U_t^*U_1XU_1^*))\,\d t\\
&-n\int_0^1\Tr(Y|U_t-\Id|^{2n+2x-2}(U_1XU_1^*U_t-U_t^*U_1XU_1^*)\,\d t\\
&+\frac{\sin(x\pi)}{\pi}\int_0^1\int_0^\infty\lambda^{-x+1}\Tr(Y|U_t-\Id|^{2n+2}(1+\lambda|U_t-\Id|^2)^{-2}\big(U_1XU_1^*U_t-U_t^*U_1XU_1^*\big)\,\d\lambda\,\d t.
\end{align*}
For the last term, we observe that
\begin{align*}
\frac{\d}{\d t}(1+\lambda|U_t-\Id|^2)^{-1}=\lambda(1+\lambda|U_t-\Id|^2)^{-2}(U_tY-YU_t^*)
\end{align*}
so
\begin{align*}
&\frac{\sin(x\pi)}{\pi}\int_0^1\int_0^\infty\lambda^{-x+1}\Tr\big(Y|U_t-\Id|^{2n+2}(1+\lambda|U_t-\Id|^2)^{-2}(U_1XU_1^*U_t-U_t^*U_1XU_1^*)\big)\,\d\lambda\,\d t\\
&=\frac{\sin(x\pi)}{\pi}\int_0^1\int_0^\infty\lambda^{-x}\Tr\big(|U_t-\Id|^{2n+2}\frac{\d}{\d t}\big((1+\lambda|U_t-\Id|^2)^{-1}\big)U_1XU_1^*\big)\,\d\lambda\,\d t.
\end{align*}
Integrating by parts yields
\begin{align*}
&=\frac{\sin(x\pi)}{\pi}\int_0^1\int_0^\infty\lambda^{-x}\Tr\Big(\frac{\d}{\d t}\Big(|U_t-\Id|^{2n+2}(1+\lambda|U_t-\Id|^2)^{-1}\Big)U_1XU_1^*\Big)\,\d\lambda\,\d t\\
&-\frac{\sin(x\pi)}{\pi}\int_0^1\int_0^\infty\lambda^{-x}\Tr\Big(\frac{\d}{\d t}\Big(|U_t-\Id|^{2n+2}\Big)(1+\lambda|U_t-\Id|^2)^{-1}U_1XU_1^*\Big)\,\d\lambda\,\d t\\
&=\frac{\sin(x\pi)}{\pi}\int_0^\infty\lambda^{-x}\Tr\big(|U_1-\Id|^{2n+2}(1+\lambda|U_1-\Id|^2)^{-1}U_1XU_1^*\big)\,\d\lambda\\
&+i\frac{\sin(x\pi)}{\pi}\int_0^1\int_0^\infty\lambda^{-x}\Tr\Big(\big(\sum_{k=0}^n|U_t-\Id|^{2k}(U_tY-U_t^*Y)|U_t-\Id|^{2(n-k)}\big)\times\\
&\qquad\qquad\qquad\qquad\qquad\qquad\qquad\qquad\qquad\qquad\qquad\times(1+\lambda|U_t-\Id|^2)^{-1}U_1XU_1^*\Big)\,\d\lambda\,\d t\\
&=\Tr(X|U_1-\Id|^{2r})
+(n+1)\int_0^1\Tr\big((U_tY-U_t^*Y)|U_t-\Id|^{2n+2x-2}U_1XU_1^*\big)\,\d t\\
&=\Tr\big(X|U_1-\Id|^{2r})+(n+1)\int_0^1\Tr(Y|U_t-\Id|^{2n+2x-2}(U_1XU_1^*U_t-U_t^*U_1XU_1^*)\big)\,\d t.
\end{align*}
Combining our calculations now shows that
\[
\d \Xi_{U_1}(X)=\Tr(X|U_1-\Id|^{2r})
\]
and so normalising by $-i(1/2)^{2r+1}C_r$ we see that $\beta_r$ is exact.
\end{proof}

%
%
%
%
\begin{rmk}
In general the functions $\Theta,\Xi$ depend on the choice of path $e^{tY}$ joining a unitary $U=e^Y$ to the identity. In turn the choice of path depends on the choice of $Y$ such that $U=e^Y$. Near the identity, there is a unique such $Y$, but in general there will exist multiple choices. Nevertheless, the differentials $\alpha,\beta$ of $\Theta,\Xi$ are independent of this choice. We return to this issue in subsection \ref{subsec:paths}.
\end{rmk}

\subsection{Integral formula for the spectral flow of loops of unitaries}
\label{subsec:specflow-unitary}

Having established that the one-forms $\alpha_{U,n}$ and $\beta_{U,r}$ are exact for sufficiently large $n\in \mathbb{N}$ and $r\in\mathbb{R}$, we will now show that the spectral flow for a path of unitaries, as defined in Definition~\ref{defn:BLP-SF}, can be written as an integral of these exact one-forms.

\begin{thm}
\label{thm:cayley-spec-flow-formula}Let $p\geq 1$ and let $(0,1)\ni t\mapsto U_t$ be a Schatten differentiable path in $\mathcal{U}_p$ such that $U_0=U_1=\Id$. Then for $r\geq (p-1)/2$ or $n\geq p-1$ we have 
\begin{align*}
\textup{sf}(U_\bullet)&=\frac{-i}{2^{2r+1}}\frac{\Gamma(r+1)}{\sqrt{\pi}\Gamma(r+1/2)}\int_0^1\Tr\Big(U_t^*\frac{\d}{\d t}(U_t)|U_t-\Id|^{2r}\Big)\,\d t\\
&=(-1)^n\frac{1}{2\pi i}\int_0^1\Tr\Big(U_t^*\frac{\d}{\d t}(U_t)(U_t-\Id)^{n}\Big)\,\d t,
\end{align*}
whenever the integrals converge. 
\end{thm}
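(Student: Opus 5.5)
The plan is to use the exactness of $\alpha$ and $\beta$ (Lemmas \ref{lem:exctness} and \ref{lem:exactness}) to reduce to a small neighbourhood of $-1$. Near $-1$ we compare with the winding of a finite-dimensional determinant, which we can compute directly. Both the integrals and $\textup{sf}$ are additive under concatenation of paths, so we can work locally.

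\textbf{Step 1: partition and spectral splitting.} Take the partition $0=t_0<\dots<t_N=1$ and the $\epsilon_j$ of Definition \ref{defn:BLP-SF}. On $[t_{j-1},t_j]$, let $P_t$ be the Riesz projection of $U_t$ onto the arc $\{e^{i(\pi+\theta)}:|\theta|<\epsilon_j\}$. It is finite rank, of constant rank, and $C^1$ in $t$, since no eigenvalues cross $e^{i(\pi\pm\epsilon_j)}$. Write $U_t=U_t^{(1)}\oplus U_t^{(2)}$ with respect to $P_t$. The block $U_t^{(2)}$ has spectrum away from $-1$, so it has a $C^1$ logarithm $Y_t^{(2)}=\log U_t^{(2)}$ in $\L^p$, using the principal branch with cut at $-1$. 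The block $U_t^{(1)}$ is finite dimensional.

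\textbf{Step 2: exactness on each block.} Let $\omega$ denote either $\alpha_{\cdot,n}$ or $\beta_{\cdot,r}$ with its normalisation. On $U^{(2)}$ use the primitive $\Theta$ (resp. $\Xi$) defined with this branch, $\Theta(U)=(-1)^n\frac{1}{2\pi i}\Tr\int_0^1 Y(e^{sY}-\Id)^n\,\d s$. This gives a single-valued function of $U^{(2)}$, so its contribution over the whole loop telescopes.

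On $U^{(1)}$ everything is finite rank and each eigenvalue $e^{i\phi}$ can be treated separately. The form pulls back to $\phi'(t)\,f(\phi)\,\d t$ with $f(\phi)=(-1)^n\frac{1}{2\pi}(e^{i\phi}-1)^n$ (resp. $\frac{1}{2\pi}\frac{C_r}{2^{2r}}|e^{i\phi}-1|^{2r}$). We therefore need the scalar identity
\[
\int_{-\pi}^{\pi} f(\phi)\,\d\phi = 1 .
\]
For $\alpha$: expanding $(e^{i\phi}-1)^n$, only the constant term $(-1)^n$ survives, giving $1$. For $\beta$: $|e^{i\phi}-1|^{2r}=2^{2r}|\sin(\phi/2)|^{2r}$, and $\int_{-\pi}^{\pi}|\sin(\phi/2)|^{2r}\,\d\phi=2\sqrt\pi\,\Gamma(r+1/2)/\Gamma(r+1)$. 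This matches the choice of $C_r$.

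Hence on the finite block, the integral over $[t_{j-1},t_j]$ equals $F(U^{(1)}_{t_j})-F(U^{(1)}_{t_{j-1}})$, where $F$ is a primitive built from the antiderivative of $f$ with branch cut at $\phi=\pi$. Summing over the loop, the jumps of this primitive are exactly $+1$ or $-1$ for each eigenvalue crossing $-1$. These jumps are recorded by $k(t_j,\epsilon_j)-k(t_{j-1},\epsilon_j)$.

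\textbf{Step 3: glue the blocks.} Because the decomposition changes with $t$, we add the contributions over all intervals. Block-diagonality means $\omega_U(X)$ depends only on the diagonal blocks of $U^*\dot U$. The compressed derivative $P\dot UP$ differs from $\frac{\d}{\d t}U^{(1)}$ by terms involving $\dot P$. These terms are off-diagonal, so they vanish under the trace against functions of $U_t$, which commute with $P_t$.

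Finally, the endpoint values of the primitives agree at shared partition points up to the integer jumps. At $t=0$ and $t=1$ we have $U=\Id$, so the total primitive vanishes there. The sum is therefore $\textup{sf}(U_\bullet)$.

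\textbf{Main obstacle.} The main obstacle is Step 3: justifying that, at each partition point, the two primitives adapted to the different arcs differ exactly by the count of eigenvalues in $[\pi,\pi+\epsilon_j)$. An eigenvalue at angle $\theta\ge 0$ lies just past the branch cut, and its $F$-value jumps by exactly $1$ there. The order conventions must match Definition \ref{defn:BLP-SF}, and I would check the sign with the loop $e^{2\pi i t}$ on a one-dimensional space. Convergence for an unbounded parameter interval is assumed by hypothesis.
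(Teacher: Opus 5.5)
Your argument is correct in substance, but it is a genuinely different proof from the paper's. The paper never localises near $-1$. It shows that the loop functionals $B_r$ and $A_n$ are locally constant on the space of $C^1$ loops because $\beta_r$ and $\alpha_n$ are closed. It then evaluates them on the generators $\Id-P+Pe^{2\pi i t}$; your identity $\int_{-\pi}^{\pi}f(\phi)\,\d\phi=1$ is exactly this normalisation. It concludes from $\pi_1(\mathcal{U}_p)\cong\pi_1(\mathcal{U}_1)\cong\Z$ together with homotopy invariance of $\textup{sf}$. You instead compare the integral directly with the counting in Definition~\ref{defn:BLP-SF}. On each $[t_{j-1},t_j]$ there is an honest primitive, and primitives adapted to different arcs differ by integers which are precisely the eigenvalue counts. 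The paper's route is shorter but imports topological input: the homotopy equivalence $\mathcal{U}_p\simeq\mathcal{U}_1$, the generators of $\pi_1$, homotopies through $C^1$ loops, and homotopy invariance of $\textup{sf}$. Yours is self-contained and uses exactness only locally. It also handles the possibly improper integrals at $t=0,1$ via $\L^p$-continuity of the primitive at $\Id$. Finally, it yields Proposition~\ref{prop:path-sf-formula} for non-closed paths, with the convention of Remark~\ref{rmk:path-convention}, essentially for free.

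There are three things to fix when writing it out.

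First, the $\beta$-density is $f(\phi)=C_r2^{-2r-1}|e^{i\phi}-1|^{2r}$, since on an eigenvector $U^*\dot U$ acts as $i\phi'$. The density you wrote integrates to $1/\pi$, not $1$.

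Second, on $[t_{j-1},t_j]$ the finite-block primitive must use the continuous angle $\pi+\theta$, $|\theta|<\epsilon_j$, not a cut at $\phi=\pi$. Otherwise it is not a primitive on that interval.

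Third, the bookkeeping becomes clean if you drop the block splitting altogether. This also removes the eigenvalue branches, which need justification for merely $C^1$ families. Let $\Theta_j$ (likewise $\Xi_j$ for $\beta$) be $\Theta$ built with the logarithm cut along the ray through $e^{i(\pi+\epsilon_j)}$. This logarithm is holomorphic near $\mathrm{spec}(U_t)$ for all $t\in[t_{j-1},t_j]$, so $\frac{\d}{\d t}\Theta_j(U_t)$ is the integrand there. Let $\Theta_{\rm pr}$ be built with $\Log$ as in \eqref{eq:log-is-good}. By $2\pi$-periodicity of $f$, $\Theta_j(U_t)-\Theta_{\rm pr}(U_t)=k(t,\epsilon_j)-\dim\ker(U_t+\Id)$. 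Summing over $j$, the $\Theta_{\rm pr}$ terms telescope to $\Theta_{\rm pr}(U_1)-\Theta_{\rm pr}(U_0)=0$. The kernel terms telescope to $\dim\ker(U_1+\Id)-\dim\ker(U_0+\Id)=0$. What remains is exactly $\textup{sf}(U_\bullet)$. This also settles the strict versus non-strict count at $\theta=0$ that you flagged as the main obstacle.
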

\begin{proof}

We will start with the first equality. To this end fix $r\geq (p-1)/2$ and on the Banach manifold $C^1(\mathbb{S}^1,\mathcal{U}_p)$ of  (piecewise) $C^1$-loops in $\mathcal{U}_p$ introduce the function $B_r$ by setting 
 \[
B_r(U_\bullet)=-i\frac{1}{2^{2r+1}}C_r\int_0^1\Tr\Big(U_t^*\frac{\d}{\d t}(U_t)|U_t-\Id|^{2r}\Big)\,\d t
=\int_0^1\beta_r\left(U_t^*\frac{\d}{\d t}(U_t)\right)\,\d t,
\]
where  $[0,1]\ni t\mapsto U_{t,\bullet}$ is a differentiable path in $C^1(\mathbb{S}^1,\mathcal{U}_p)$. 
Note that the chain rule gives
\[
\frac{\d}{\d s}\Big|_{s=s_0}B_r( U_{s,\bullet})=dB_r\Big(\frac{\d}{\d s}\Big|_{s=s_0}U_{s,\bullet}\Big).
\]
Computing the differential at $X\in \L^p$ yields
\[
\d B_{r,U_{s_0,\bullet}}(X)
=\int_0^1\d\beta_r(U_{s_0,t}^*\frac{\d}{\d t}(U_{s_0,t}),X)\,\d t=0
\]
since $\beta_r$ is closed by Lemma \ref{lem:exactness}. Hence $B_r$ is locally constant and so homotopy invariant. Furthermore, the function  $B_r$ satisfies concatenation by construction.

To see that $B_r$ is integer-valued, let $P$ be a rank $k, k\in\mathbb{N},$ projection on the Hilbert space $\H$ and  define the loop
\begin{equation}\label{eq:normilising_loop}
U_t^{(k)}={\rm Id}-P+Pe^{i2\pi t}, t\in [0,1].
\end{equation}
As $k$ eigenvalues go anti-clockwise through $-1$ on this path, the spectral flow for this loop is  $k$. To compute $B_r$ for this loop we note that 
 $\frac{\d}{\d t}U_t^{(k)}=i2\pi Pe^{i\pi t}$ and $U_t^{(k)}-\Id=P(-1+e^{i2\pi t})$.  Hence, we can compute 
\begin{align*}
\int_0^1&\Tr\Big((U_t^{(k)})^*\frac{\d}{\d t}(U_t^{(k)})|U_t^{(k)}-\Id|^{2r}\Big)\,\d t
=\int_0^1\Tr\big(2\pi i P|-1+e^{i2\pi t}|^{2r}\big)\,\d t\\
&=2\pi i k \int_0^1 |-1+e^{i2\pi t}|^{2r}\,\d t=ik\pi 2^{2r+1}\int_0^1\sin^{2r}(t\pi)\,\d t\\
&=ki2^{2r+1}\frac{\sqrt{\pi}\Gamma(r+1/2)}{\Gamma(r+1)},
\end{align*}
where the last line follows from a standard integral. Recalling the definition of the constant $C_r$ (see Definition \ref{defn:one-forms}), we infer that \begin{equation}\label{eq:norm_coincides}
B_r(U_t^{(k)})=k=\textup{sf}(U_t^{(k)}).
\end{equation}

Now $\mathcal{U}_p$ is homotopy equivalent to $\mathcal{U}_1$, and each class in $\pi_1(\mathcal{U}_1)\cong\Z$ is represented by one of the paths $U_t^{(k)}$ or its reverse path. So for any loop $V_\bullet\in\mathcal{U}_p$ we have a homotopy $V_{s,\bullet}$ with $V_{0,\bullet}=V_\bullet$ and $V_{1,\bullet}=U^{(k)}_\bullet$. By the homotopy invariance of $B_r$ and the computation above
\[
B_r(V_{0,\bullet})=B_r(V_{1,\bullet})=k\in\Z.
\]
Referring to the normalisation check in \eqref{eq:norm_coincides}, we see that $B_r=\textup{sf}$ on every loop in $\mathcal{U}_p$. This is analogous to, but simpler than, 
Lesch' uniqueness characterisation of the spectral flow.

To prove the second asserted equality, fix $n\geq p-1$ and introduce the function $A_n$ on the Banach manifold $C^1(\mathbb{S}^1,\mathcal{U}_p)$ by setting 
\[
A_n(U_\bullet)=(-1)^n\frac{1}{2\pi i}\int_0^1\Tr\Big(U_t^*\frac{\d}{\d t}(U_t)(U_t-\Id)^{n}\Big)\,dt=\int_0^1\alpha_n(U_t^*\frac{\d}{\d t}(U_t))\,\d t,
\]
where  $[0,1]\ni t\mapsto U_{t,\bullet}$ be a differentiable path in $C^1(\mathbb{S}^1,\mathcal{U}_p)$. Using an identical proof as for the function $B_r$, one can show that the function is locally constant, homotopy invariant, descends to a map on $\pi_1(\mathcal{U}_p)$ and satisfies concatenation. 
For the simple path $U_t^{(k)}$ introduced in \eqref{eq:normilising_loop} we can check that
\begin{align*}
\int_0^1\Tr\Big((U_t^{(k)})^*\frac{\d}{\d t}(U_t^{(k)})(U_t^{(k)}-\Id)^{n}\Big)\,\d t
&=\int_0^1\Tr\big(2\pi iP(-1+e^{i2\pi t})^{k}\big)\,\d t
=(-1)^n2\pi i k 
\end{align*}
and so we again have $A_n(U_\bullet^{(k)})=k=\textup{sf}(U_\bullet^{(k)})$, showing that the function $A_n$ satisfies the same normalisation as the spectral flow. Thus, $A_n(U_\bullet)=\textup{sf}(U_\bullet)$, proving the second asserted equality. 
%
%
%
%
\end{proof}

In Theorem \ref{thm:cayley-spec-flow-formula}, the integral formula for spectral flow  is presented for a loop of unitaries indexed by the interval $[0,1]$. In many application (particularly those considered in Section \ref{sec:Lev-det} below) it is convenient to consider this formula for paths of unitaries indexed by $[0,\infty)$ or by $\mathbb{R}$. For this reason, we state the version of this formula in this setting. 

\begin{corl}\label{cor:cayley-spec-flow-formula}
Let $p\geq 1$ and let $n\in\mathbb{N}$ and $r\in \mathbb{R}$ be such that $r\geq (p-1)/2$ and $n\geq p-1$. Let $(U_s), s\in [0,\infty)$  be a path in $\mathcal{U}_p$ such that 
\begin{enumerate}
\item $(U_s)$ is $\L^p$-differentiable on any compact interval of $[0,\infty)$;
\item The limit $\lim_{s\to\infty}U_s=:U_\infty$ exists in the Schatten norm $\|\cdot\|_p$ and $U_0=U_\infty=\Id$;
\item The functions $s\mapsto \Tr\Big(U_s^*\frac{\d}{\d s}(U_s)|U_s-\Id|^{2r}\Big)$ and $s\mapsto \Tr\Big(U_s^*\frac{\d}{\d s}(U_s)(U_s-\Id)^{n}\Big)$ are integrable on $[0,\infty)$.
\end{enumerate}
 Then
\begin{align*}
\textup{sf}(U_\bullet)&=\frac{-i}{2^{2r+1}}\frac{\Gamma(r+1)}{\sqrt{\pi}\Gamma(r+1/2)}\int_0^\infty\Tr\Big(U_s^*\frac{\d}{\d s}(U_s)|U_s-\Id|^{2r}\Big)\,\d s\\
&=(-1)^n\frac{1}{2\pi i}\int_0^\infty\Tr\Big(U_s^*\frac{\d}{\d s}(U_s)(U_s-\Id)^{n}\Big)\,\d s.
\end{align*}
A similar result holds for a path indexed by $\mathbb{R}$ (in which case we assume that the limits in the Schatten norm as $s\to\pm \infty$ coincide and are the identity).
\end{corl}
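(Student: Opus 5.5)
The plan is to reduce the corollary to Theorem \ref{thm:cayley-spec-flow-formula} by a reparametrisation of $[0,\infty)$ onto $[0,1)$, and then to justify passing to the limit at the endpoint $t=1$. Fix a diffeomorphism $\phi:[0,1)\to[0,\infty)$, for instance $\phi(t)=t/(1-t)$, and define $V_t=U_{\phi(t)}$ for $t\in[0,1)$ and $V_1=U_\infty=\Id$. By hypothesis (2), $t\mapsto V_t$ is continuous on $[0,1]$ in $\|\cdot\|_p$ and is a loop based at $\Id$. By hypothesis (1) it is $\L^p$-differentiable on every $[0,T]\subset[0,1)$. Spectral flow is defined via Definition \ref{defn:BLP-SF} using only $\|\cdot\|_p$-continuity, and it is invariant under reparametrisation, so $\textup{sf}(U_\bullet)=\textup{sf}(V_\bullet)$.

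By the substitution $s=\phi(t)$, $V_t^*\dot V_t=\phi'(t)\,U_{\phi(t)}^*\frac{\d}{\d s}U_{\phi(t)}$. Hence the integrals over $[0,1)$ for $V$ equal the corresponding integrals over $[0,\infty)$ for $U$, and they converge absolutely by hypothesis (3). The only gap is that $V$ need not be differentiable at $t=1$. Theorem \ref{thm:cayley-spec-flow-formula} requires the loop to be differentiable except at finitely many interior points, with continuity everywhere.

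To close this gap I will approximate. For $T<1$ close to $1$, $\|V_T-\Id\|_p$ is small, so $V_T=e^{Y_T}$ with $Y_T=-Y_T^*\in\L^p$ given by the principal logarithm, and $\|Y_T\|_p\to0$ as $T\to1$. Consider the loop $W^{(T)}$ which follows $V$ on $[0,T]$ and then the geodesic $e^{(1-\tau)Y_T}$, $\tau\in[0,1]$, back to $\Id$. This loop is piecewise $C^1$, so Theorem \ref{thm:cayley-spec-flow-formula} applies to it. Moreover $W^{(T)}$ is homotopic to $V$ for $T$ near $1$: both pieces stay in a small $\|\cdot\|_p$-ball around $\Id$, in which $-1\notin\sigma$. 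Since spectral flow is a homotopy invariant of loops (or directly from Definition \ref{defn:BLP-SF}, because no eigenvalue reaches $-1$ near the end), $\textup{sf}(W^{(T)})=\textup{sf}(V_\bullet)$.

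Theorem \ref{thm:cayley-spec-flow-formula} therefore gives
\[
\textup{sf}(V_\bullet)=c\int_0^T\Tr\big(V_t^*\dot V_t|V_t-\Id|^{2r}\big)\,\d t-c\int_0^1\Tr\big(Y_T|e^{(1-\tau)Y_T}-\Id|^{2r}\big)\,\d\tau,
\]
where $c$ is the normalising constant. The last term is bounded by $\|Y_T\|_p\,\|e^{(1-\tau)Y_T}-\Id\|_p^{2r}$ via H\"older, which tends to $0$, and the first term tends to the full integral by integrability. The $(U-\Id)^n$ version is handled identically. The main obstacle is this endpoint step, namely guaranteeing a well-defined small logarithm and the H\"older estimate on the closing geodesic. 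If $2r<p-1$ is permitted by how the exponents are chosen, one should instead estimate in the $\L^{p/(2r)}$ norm, which still works because $|V-\Id|^{2r}\in\L^{p/2r}$ and $1/p+2r/p\le 1$ requires exactly $r\geq(p-1)/2$. The $\R$-indexed case follows with $\phi(t)=\tan(\pi(t-\tfrac12))$ and closing geodesics at both ends.
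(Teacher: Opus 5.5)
Your proof is correct. Its core is the paper's own argument: reparametrise $[0,\infty)$ onto $[0,1)$ (the paper uses $t=1-(1+s)^{-\alpha/2}$, you use $\phi(t)=t/(1-t)$, and the choice is immaterial), set $V_1=\Id$, and invoke Theorem~\ref{thm:cayley-spec-flow-formula}.

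The difference is what happens at $t=1$. The paper applies the theorem directly to $V_\bullet$, which is only differentiable on $[0,1)$. This is permitted by the letter of the theorem and of the paper's definition of piecewise differentiable, which only asks for differentiability on $(0,1)$ and assumes the integrals converge. So the ``gap'' you identify is not a gap relative to the theorem as stated. However, the theorem's proof is a homotopy-invariance argument on $C^1$ loops, and it does not by itself explain why an improper integral at a point of non-differentiability still computes the spectral flow. Your truncation supplies exactly this justification:
\begin{itemize}
\item You cut at $T<1$ and close with the short geodesic $e^{(1-\tau)Y_T}$, so the theorem is only ever applied to genuinely piecewise $C^1$ loops.
\item The spectral flow does not change, because the discarded piece $V|_{[T,1]}$ and the added geodesic both stay in a neighbourhood of $\Id$ whose spectrum avoids $-1$.
\item The closing term is bounded by $\|Y_T\|_p^{2r+1}$ (respectively $\|Y_T\|_p^{n+1}$), so it vanishes as $T\to1$.
\item Hypothesis (3) gives convergence of the truncated integrals to the full integral.
\end{itemize}
The paper's route is shorter; yours is self-contained and does not rely on reading the theorem as covering improper integrals at an endpoint.

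One small slip is in your final remark. The H\"older condition for trace class is $1/p+2r/p\geq 1$, not $\leq 1$, and the case $2r<p-1$ never arises under the hypotheses. Your main estimate is right precisely because $2r\geq p-1$ gives $2rp'\geq p$ for the conjugate exponent $p'$. Hence $\||A|^{2r}\|_{p'}=\|A\|_{2rp'}^{2r}\leq\|A\|_p^{2r}$.
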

\begin{proof}
 Consider the change of variable  $t=f(s)=1-(1+s)^{-\alpha/2}$, $s\in [0,\infty)$ and the path $V_{f(s)}=U_{s}$. Then  the loop $V_{f(s)}$ is a continuous  loop in $\mathcal{U}_p(H)$ with $V_0=V_1=\Id$. Since $(U_\bullet)$ is differentiable on any compact interval of $[0,\infty)$ we have that $V_\bullet$ is differentiable on $[0,1)$.
 Note that $U'_t=V'_{f(s)}f'(s)$, so that $U^*_tU'_t \frac{dt}{ds}=V^*_sV'_s$. Hence, by Theorem \ref{thm:cayley-spec-flow-formula} we have that 
\begin{align*}
\textup{sf}(U_s)_{s\in[0,\infty)}&=\textup{sf}(V_t)_{t\in[0,1]}\\
&=\frac{-i}{2^{2r+1}}\frac{\Gamma(r+1)}{\sqrt{\pi}\Gamma(r+1/2)}\int_0^1\Tr\Big(V_t^*\frac{\d}{\d t}(V_t)|U_t-\Id|^{2r}\Big)\,\d t\\
&=\frac{-i}{2^{2r+1}}\frac{\Gamma(r+1)}{\sqrt{\pi}\Gamma(r+1/2)}\int_0^\infty\Tr\Big(U_s^*\frac{\d}{\d s}(U_s)|U_s-\Id|^{2r}\Big)\,\d s,
\end{align*}
as required.

For the case when the path is indexed by $\mathbb{R}$ the argument is similar with a possible change of variables given by $t=\frac1{1+e^{-s}}$, $s\in \mathbb{R}$.
\end{proof}

\subsection{Regularised determinants}
\label{subsec:reg-dets}
We now aim to rewrite the right-hand side of the integral formula in Theorem \ref{thm:cayley-spec-flow-formula} in terms of regularised determinants of $U_t$.  We recall some basic definitions and results for regularised determinants and  refer to \cite{GGK, Simon05} for the general theory of regularised determinants.

Let, as before, $\H$ be a separable infinite dimensional Hilbert space.
Let $\{h_n\}_{n=1}^{\infty}$ denote an orthonormal basis for $\H$ and suppose $A\in\L^1$.  For each 
$N \in \N$, let $M_N \in \C^{N \times N}$ denote the matrix with entries
\begin{equation}
\delta_{j,k} + (h_j,Ah_k)_{\H},\quad 1\leq j,k\leq N.
\end{equation}
The sequence $\{ \textup{Det}(M_N)\}_{N=1}^{\infty}$ has a limit as $N \to \infty$, and its value does not depend on the orthonormal basis chosen.  One defines the Fredholm determinant
\begin{equation}
 \textup{Det}(\Id+A):=\lim_{N \to\infty} \textup{Det}(M_N).
\end{equation}
The Fredholm determinant is Fr\'echet differentiable with respect to $A$ (cf., e.g., \cite[Theorem 5.2]{Simon05}).
Moreover, if $\Omega \subseteq \C$ is an open set and $A:\Omega \to \L^1$ is analytic, then the function $ \textup{Det}(\Id + A(\,\cdot\,))$ is analytic in $\Omega$, and  on a dense open subset of $\Omega$
\begin{equation}\label{eq_ddz_logdet}
\frac{d}{dz} \Log\big( \textup{Det}(\Id+A(z))\big) = \tr\big((\Id+A(z))^{-1}A'(z)\big).
\end{equation}
Here we use the convention that $\Log:\C\setminus\{0\}\to\C$ satisfies 
\begin{equation}
\Log(re^{it})=\ln(r)+it\quad\mbox{for}\quad-\pi<t\leq \pi.
\label{eq:log-is-good}
\end{equation} 
This convention means that $\Log\textup{Det}$ has jump discontinuities, but is otherwise smooth.

For an operator $U$ with $U - \textup{Id} \in \L^p$, the operator $U \exp \left( \sum_{\ell = 0}^{p-1} \frac{(-1)^\ell}{\ell} (U-\textup{Id})^\ell \right)- \textup{Id}$ defines a trace-class operator and so we can define the $p$-determinant of $U$ by
\begin{align*}
\textup{Det}_p(U) &= \textup{Det} \left(U \exp \left( \sum_{\ell = 0}^{p-1} \frac{(-1)^\ell}{\ell} (U-\textup{Id})^\ell \right) \right).
\end{align*}
If $p \geq 2$ and $U - \textup{Id} \in \L^{p-1}$ also, we have the recursion formula (see e.g. \cite[Theorem 9.2]{Simon05})
\begin{align}\label{eq:Det-recursion}
\textup{Det}_p(U) &= \textup{Det}_{p-1}(U) \exp\left( \frac{(-1)^{p-1}}{p-1} \textup{Tr} \left( (U-\textup{Id})^{p-1} \right) \right),
\end{align}
and if further $U-{\rm Id}\in \L^q$ for all $1\leq q\leq p$ we have
\begin{align}\label{eq:Det-reduced}
\textup{Det}_p(U) &= \textup{Det}(U) \exp \left( \sum_{\ell = 1}^{p-1} \frac{(-1)^\ell}{\ell} \textup{Tr} \left( (U-\textup{Id})^\ell \right) \right).
\end{align}
%

In the following we shall need a version of Equation \eqref{eq_ddz_logdet} for the regularised determinant $\textup{Det}_p$ and a path of unitaries indexed by a real variable $t$. We start with an auxiliary lemma. The version of this lemma for analytic Schatten-valued maps is well-known (see e.g.   \cite[Lemma 9.1]{Simon05}).
\begin{lemma}\label{lem_derivative_computation}
Let $p\in\mathbb N$, and let $t\mapsto U_t$, $t\in[0,1]$ be a path of $p$-Schatten unitaries which is differentiable in Schatten norm on $(0,1)$. Define $$R_t:=U_t\exp\!\left(\sum_{j=1}^{p-1} \frac{(-1)^j}{j}(U_t-\Id)^j\right)-{\rm Id}.$$
Then $R_t\in \mathcal L^1$, the map $t\mapsto R_t$ is differentiable in the trace-class norm on $(0,1)$, and
$$\Tr\bigl((\Id+R_t)^{-1}R_t'\bigr)
=
(-1)^{p-1}\Tr\bigl(U_t^{-1}(U_t-\Id)^{p-1}U_t'\bigr),
\qquad t\in(0,1).
$$
\end{lemma}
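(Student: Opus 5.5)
Write $K_t:=U_t-\Id\in\L^p$ and introduce the entire function
\[
g(z):=(1+z)\exp\Bigl(\sum_{j=1}^{p-1}\tfrac{(-1)^j}{j}z^j\Bigr)-1 .
\]
Then $R_t=g(K_t)$, and everything reduces to properties of $g$.

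\emph{Step 1: $R_t\in\L^1$.} The power series of $\log(1+z)$ is $\sum_{j\ge1}(-1)^{j-1}z^j/j$. Hence $\exp(\sum_{j=1}^{p-1}(-1)^jz^j/j)=(1+z)^{-1}\exp(\sum_{j\ge p}(-1)^{j-1}z^j/j)$ near $0$, so $g$ vanishes to order $p$ at $0$. More concretely, differentiate directly:
\[
g'(z)=\exp(\cdots)\Bigl[1+(1+z)\sum_{j=1}^{p-1}(-1)^jz^{j-1}\Bigr].
\]
Since $(1+z)\sum_{j=0}^{p-2}(-z)^j=1-(-z)^{p-1}$, the bracket equals $1-(1-(-z)^{p-1})=(-1)^{p-1}z^{p-1}$. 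Thus
\[
g'(z)=(-1)^{p-1}z^{p-1}e(z),\qquad e(z):=\exp\Bigl(\sum_{j=1}^{p-1}\tfrac{(-1)^j}{j}z^j\Bigr).
\]
Because $g(0)=0$, $g(z)=z^p h(z)$ with $h$ entire. So $R_t=K_t^p\,h(K_t)\in\L^{p/p}=\L^1$ by Hölder.

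\emph{Step 2: trace-norm differentiability and the trace of the derivative.} Since $K_t$ is differentiable in $\L^p$, the products $K_t^p$ are differentiable in $\L^1$ by the Leibniz rule. The holomorphic functional calculus $h(K_t)$, defined by a power series converging in operator norm, is differentiable in operator norm. Hence $R_t$ is $\L^1$-differentiable.

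For the trace, I will use cyclicity together with the fact that $\Id+R_t=U_te(K_t)$ commutes with $K_t$. The standard identity is
\[
\Tr\bigl(F(K)'\,G(K)\bigr)=\Tr\bigl(F'(K)K'G(K)\bigr)
\]
for a monomial $F$ whenever the products are trace class: for $F(z)=z^m$ the derivative is $\sum_k K^kK'K^{m-1-k}$, and cyclic permutation inside the trace moves the $K$'s next to $G(K)$, with which they commute. Extend this to power series by norm convergence, taking care that trace-class membership is preserved in each term. This needs the $\L^1$ bound from Step 1, namely that each term carries at least $p$ factors of $K$ or $K'$ once we keep $g'$ in the factored form. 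With $G(z)=(\Id+g(z))^{-1}=((1+z)e(z))^{-1}$, valid since $U_t$ and $e(K_t)$ are invertible, we get
\[
\Tr\bigl((\Id+R_t)^{-1}R_t'\bigr)=\Tr\bigl(g'(K_t)K_t'\,(U_te(K_t))^{-1}\bigr).
\]
Substituting the formula for $g'$, the factor $e(K_t)$ cancels, leaving
\[
(-1)^{p-1}\Tr\bigl(K_t^{p-1}U_t'U_t^{-1}\bigr)=(-1)^{p-1}\Tr\bigl(U_t^{-1}(U_t-\Id)^{p-1}U_t'\bigr)
\]
by cyclicity and commutation of $U_t^{-1}$ with $K_t$.

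\emph{Main obstacle.} The main obstacle is justifying the term-by-term cyclic rearrangement. The expression $\sum_k K^kK'K^{m-1-k}$ is only trace class when $m\ge p$, since $K'$ is only in $\L^p$. The trace identity therefore cannot be applied to $g$ split into its parts; it must be applied to $g(z)=z^ph(z)$ as a whole. I would handle this by differentiating $K_t^p h(K_t)$ via the product rule. This yields terms of the form $K^aK'K^b h(K)$ with $a+b=p-1$, plus $K^p\,\frac{d}{dt}h(K_t)$. After multiplying by the bounded $G(K)$, both are trace class, and cyclicity applies to each. Recombining the terms uses the scalar identity $(z^ph)'=g'$. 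Alternatively, the same manipulation can be made rigorous by approximating $K_t$ by finite-rank operators, where all traces are finite, and passing to the limit in $\L^p$ using continuity of both sides.
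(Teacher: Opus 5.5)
Your proposal is correct and follows essentially the paper's argument. Both use the same entire function, write it as $z^p h(z)$ to get $R_t\in\L^1$ by H\"older, derive the same identity $g'(z)=(-1)^{p-1}z^{p-1}e(z)$ (which is the paper's $(1+g)^{-1}g'=(-1)^{p-1}z^{p-1}/(1+z)$), and apply cyclicity of the trace term by term, using that $\Id+R_t$ commutes with $U_t$. The only difference is cosmetic: you differentiate $K_t^p h(K_t)$ by the product rule, while the paper expands $g=\sum_{m\ge p}a_m z^m$ and differentiates each monomial of degree $m\ge p$ in $\L^1$; either way every term carries at least $p$ Schatten factors, so the cyclic rearrangements are legitimate.
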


\begin{proof}
Define a scalar function
$$
h(z):=(1+z)\exp\!\left(\sum_{j=1}^{p-1} \frac{(-1)^j}{j}z^j\right)-1,$$
so that $R_t=h(U_t-\Id).$

It is well know that $h(z)=z^pg(z)$ for an entire function $g$ (see e.g. the proof of \cite[Lemma 9.1]{Simon05}). In particular, the H\"older inequality implies that $R_t\in \L^1$. Computing $h'(z)$ yields
\[
h'(z)
=
(1+h(z))
\left(
\frac{1}{1+z}+\sum_{j=1}^{p-1} (-1)^j z^{j-1}
\right).
\]
Note that 
\[
\frac{1}{1+z}+\sum_{j=1}^{p-1} (-1)^j z^{j-1}
=
(-1)^{p-1}\frac{z^{p-1}}{1+z},
\]
and so 
\[
h'(z)=(1+h(z))\,(-1)^{p-1}\frac{z^{p-1}}{1+z},
\]
or, equivalently,
\begin{equation}\label{eq_h_prime}
(1+h(z))^{-1}h'(z)=(-1)^{p-1}\frac{z^{p-1}}{1+z}.
\end{equation}

Next, we show that  $t\mapsto R_t$ is differentiable in the trace norm and compute $R_t'$. We write 
\[
h(z)=\sum_{m=p}^{\infty} a_m z^m,
\]
where $a_m$ are such that $g(z)=\sum_{m=0}^\infty a_mz^m.$ 
For each integer $m\ge 1$, we have 
\[
\frac{d}{dt}(U_t-\Id)^m=\sum_{k=0}^{m-1}(U_t-\Id)^kU_t'(U_t-\Id)^{m-1-k}
\]
in the $\mathcal L^{p/m}$-norm. 
Hence we obtain 
\[
R_t'
=
\sum_{m=p}^{\infty} a_m \sum_{k=0}^{m-1}(U_t-\Id)^kU_t'(U_t-\Id)^{m-1-k}
\]
in $\mathcal L^1$.

Since $\Id+R_t=U_t\exp\!\left(\sum_{j=1}^{p-1} \frac{(-1)^j}{j}(U_t-\Id)^j\right)$ and $U_t$ is unitary, it follows that $\Id+R_t$ is invertible. Therefore
\[
(\Id+R_t)^{-1}R_t'\in \mathcal L^1.
\] 
Using the above formula for $R_t'$, we infer that 
\begin{align*}
\Tr\bigl((\Id+R_t)^{-1}R_t'\bigr)
&=
\sum_{m=p}^{\infty} a_m \sum_{k=0}^{m-1}
\Tr\left((\Id+R_t)^{-1}(U_t-\Id)^kU_t'(U_t-\Id)^{m-1-k}\right)\\
&=\sum_{m=p}^{\infty} m a_m
\Tr\left((\Id+R_t)^{-1}(U_t-\Id)^{m-1}U_t'\right),
\end{align*}
where in the last equation we used the cyclicity of the trace and that $R_t$ commutes with $U_t$. 
Since
\[
h'(z)=\sum_{m=p}^{\infty} m a_m z^{m-1},
\]
we deduce that
\[
\Tr\bigl((\Id+R_t)^{-1}R_t'\bigr)
=
\Tr\left((\Id+R_t)^{-1}h'(U_t-\Id)U_t'\right).
\]
By \eqref{eq_h_prime} we have that

\[
(\Id+R_t)^{-1}h'(U_t-\Id)=(-1)^{p-1}U_t^{-1}(U_t-\Id)^{p-1}.
\]
Substituting this into the trace identity above yields
\[
\Tr\bigl((I+R_t)^{-1}R_t'\bigr)
=
(-1)^{p-1}\Tr\bigl(U_t^{-1}(U_t-\Id)^{p-1}U_t'\bigr),
\]
as required. 
\end{proof}

The following lemma relates spectral flow formula to regularised determinants, which in Section~\ref{sec:Lev-det} will allow us to rewrite Levinson's theorem as a spectral flow formula.

%
\begin{lemma}\label{lem:det-p-to-det}
Suppose that $p \in \N$ and $U_\bullet: [0,1] \to \mathcal{U}_p$ is a path of $p$-Schatten unitaries which is differentiable in $\|\cdot\|_p$-norm on $(0,1)$. 
Then for almost all $t\in[0,1]$ 
\begin{align}\label{eq:Det-log-diff}
\frac{\frac{\d}{\d t}  \textup{Det}_p(U_t) }{\textup{Det}_p(U_t)}
&= (-1)^{p-1} \textup{Tr} \left( U^*_t U'_t (\textup{Id}-U_t)^{p-1} \right).
\end{align}
\end{lemma}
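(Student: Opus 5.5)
The plan is to reduce the statement to Lemma \ref{lem_derivative_computation} together with the logarithmic derivative formula for the ordinary Fredholm determinant. By definition, $\textup{Det}_p(U_t)=\textup{Det}(\Id+R_t)$, where $R_t$ is the trace-class operator of Lemma \ref{lem_derivative_computation}. That lemma shows that $t\mapsto R_t$ is differentiable in trace norm on $(0,1)$. It also shows that $\Id+R_t=U_t\exp\bigl(\sum_{j=1}^{p-1}\frac{(-1)^j}{j}(U_t-\Id)^j\bigr)$ is invertible, being a product of a unitary and an exponential. Hence $\textup{Det}_p(U_t)\neq 0$ for every $t$, and the left-hand side of \eqref{eq:Det-log-diff} makes sense wherever the derivative exists.

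The first step is to establish the real-variable analogue of \eqref{eq_ddz_logdet}. Since $\textup{Det}$ is Fr\'echet differentiable on $\Id+\L^1$ (\cite[Theorem 5.2]{Simon05}), with differential $\d\,\textup{Det}_{\Id+A}(B)=\textup{Det}(\Id+A)\Tr\bigl((\Id+A)^{-1}B\bigr)$ at invertible $\Id+A$, the chain rule applies to the trace-norm differentiable path $R_t$. It gives
\[
\frac{\d}{\d t}\textup{Det}(\Id+R_t)=\textup{Det}(\Id+R_t)\,\Tr\bigl((\Id+R_t)^{-1}R_t'\bigr),\qquad t\in(0,1).
\]
If one prefers not to quote the form of the differential, it can be derived as follows. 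Write $\Id+R_{t+h}=(\Id+R_t)\bigl(\Id+(\Id+R_t)^{-1}(R_{t+h}-R_t)\bigr)$ and use multiplicativity of $\textup{Det}$. Then apply the first-order expansion $\textup{Det}(\Id+B)=1+\Tr B+o(\|B\|_1)$, which follows from the estimate $|\textup{Det}(\Id+B)-1-\Tr B|\le \|B\|_1^2e^{\|B\|_1+1}$ or from Lidskii's theorem together with continuity.

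The second step is to substitute the identity from Lemma \ref{lem_derivative_computation}. This gives
\[
\frac{\frac{\d}{\d t}\textup{Det}_p(U_t)}{\textup{Det}_p(U_t)}=(-1)^{p-1}\Tr\bigl(U_t^*(U_t-\Id)^{p-1}U_t'\bigr).
\]
Cyclicity of the trace is legitimate here because $U_t'\in\L^p$ and $(U_t-\Id)^{p-1}\in\L^{p/(p-1)}$, so the product is trace class. Cyclicity together with the fact that $U_t^*$ commutes with $U_t-\Id$ turns the right-hand side into $\Tr\bigl(U_t^*U_t'(U_t-\Id)^{p-1}\bigr)$. This yields \eqref{eq:Det-log-diff} for every $t\in(0,1)$, and hence for almost every $t\in[0,1]$.

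There is one sign point to watch. The statement is written with $(\Id-U_t)^{p-1}$, which differs from $(U_t-\Id)^{p-1}$ by $(-1)^{p-1}$. I will therefore recheck the convention in \eqref{eq_h_prime} and in the statement carefully, and adjust the sign bookkeeping so that the final identity matches as written. The main obstacle I expect is the first step: justifying the logarithmic derivative for a merely real-differentiable path rather than an analytic one. The chain rule argument above handles this, and the "almost all $t$" qualifier leaves room for the finitely many points where differentiability might fail.
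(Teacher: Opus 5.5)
Your argument is correct and is essentially the paper's proof. The paper also writes $\textup{Det}_p(U_t)=\Det(\Id+R_t)$ and obtains the logarithmic derivative of the Fredholm determinant along a trace-norm differentiable path. It does so from multiplicativity plus the first-order expansion $\Det(\Id+B)=1+\Tr B+O(\|B\|_1^2)$, explicitly for $p=1$ and then applied to $\Id+R_t$, and it then substitutes Lemma~\ref{lem_derivative_computation}. Your observation that $\Id+R_t$ is invertible gives the identity at every $t\in(0,1)$. That is slightly cleaner than the paper's detour through intervals on which $\Log\textup{Det}_p$ is differentiable.

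The one thing to change is your plan for the sign. The discrepancy you noticed is genuine, and rechecking \eqref{eq_h_prime} will not remove it, because that identity is correct. What you derive, and what the paper's own proof concludes, is
\[
\frac{\frac{\d}{\d t}\textup{Det}_p(U_t)}{\textup{Det}_p(U_t)}=(-1)^{p-1}\Tr\bigl(U_t^*U_t'(U_t-\Id)^{p-1}\bigr)=\Tr\bigl(U_t^*U_t'(\Id-U_t)^{p-1}\bigr).
\]
The displayed statement carries both the factor $(-1)^{p-1}$ and $(\Id-U_t)^{p-1}$. It is therefore off by $(-1)^{p-1}$ and is false for even $p$.

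A quick check: take $p=2$ and $U_t=\Id-P+e^{it}P$ with $P$ a rank one projection. Then $\textup{Det}_2(U_t)=e^{it}e^{1-e^{it}}$, whose logarithmic derivative is $i(1-e^{it})$. The right-hand side as printed equals $-i(1-e^{it})$.

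The version with $(U_t-\Id)^{p-1}$ is also the one the paper uses later. It is what makes Theorem~\ref{thm:spec-flow-formula-det} follow from Theorem~\ref{thm:cayley-spec-flow-formula} with $n=p-1$, and it is the form invoked in Lemma~\ref{lem_integral_comp}. So conclude with the formula above and flag the printed statement as a sign typo, rather than adjusting signs to force agreement with it.
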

\begin{proof}
Assume first that $p=1$. 
%
%
%
%
Since $U_t$ is trace-class differentiable, we can write 
\[
U_{t+\varepsilon}=U_t+\varepsilon U_t'+O(\varepsilon^2)
\]
as \(\varepsilon\to 0\).
Applying the determinant to both sides and using multiplicativity of the determinant, we obtain
\[
\textup{Det}(U_{t+\varepsilon})
=
\textup{Det}(U_t)\textup{Det}\bigl(\Id+\varepsilon U_t^*U_t'+O(\varepsilon^2)\bigr).
\]
By \cite[p. 268]{Simon77} we have that 
\[
\textup{Det}\bigl(\Id+\varepsilon U_t^*U_t'+O(\varepsilon^2)\bigr)
=
1+\varepsilon \Tr(U_t^*U_t')+O(\varepsilon^2).
\]
Substituting this into the previous expression yields
\[
\textup{Det}(U_{t+\varepsilon})
=
\textup{Det}(U_t)+\varepsilon \textup{Det}(U_t)\Tr(U_t^*U_t')+O(\varepsilon^2),
\]
so that 
\[
\frac{\textup{Det}(U_{t+\varepsilon})-\textup{Det}(U_t)}{\varepsilon\,\textup{Det}(U_t)}
=
\Tr(U_t^*U_t')+O(\varepsilon).
\]
Passing to the limit as \(\varepsilon\to 0\), it follows that
\[
\frac{\frac{d}{dt}\textup{Det}(U_t)}{\textup{Det}(U_t)}=\Tr(U_t^{-1}U_t'),
\]
which proves the formula for $p=1$.

Let $p\in \N$ be arbitrary. Arguing similarly we have that  there is a decomposition of the interval $[0,1]$ into finitely many closed intervals intersecting only in singletons such that $\Log\textup{Det}_p(U_t)$ is differentiable on each interval. Let $R_t$ as in Lemma \ref{lem_derivative_computation}. By Lemma \ref{lem_derivative_computation}, $R_t$ is a trace-class differentiable function with 
\[
\Tr(R_t^{-1}R_t')=(-1)^{p-1}\Tr\bigl(U_t^{-1}(U_t-\Id)^{p-1}U_t'\bigr).
\] 
Therefore, since $\textup{Det}_p(U_t)=\textup{Det}(R_t)$ we infer that 
\[
\frac{\frac{d}{dt}\textup{Det}_p(U_t)}{\textup{Det}_p(U_t)}=\Tr(R_t^{-1}R_t')=(-1)^{p-1}\Tr\bigl(U_t^{-1}(U_t-\Id)^{p-1}U_t'\bigr),
\]
as required. 
\end{proof}

Combining Theorem \ref{thm:cayley-spec-flow-formula} and Lemma \ref{lem:det-p-to-det} we have the following result. 

\begin{thm}
\label{thm:spec-flow-formula-det}
Suppose that $p \in \N$ and $U_\bullet: [0,1] \to \mathcal{U}_p$ is a $C^1$ loop of $p$-Schatten  unitaries. Then
\begin{align*}
\textup{sf}(U_\bullet)&=\frac{1}{2\pi i}\int_0^1\frac{\frac{d}{dt}\textup{Det}_p(U_t)}{\textup{Det}_p(U_t)}\,\d t.\end{align*}
\end{thm}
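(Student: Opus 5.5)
The plan is to combine Lemma \ref{lem:det-p-to-det} with the second formula of Theorem \ref{thm:cayley-spec-flow-formula}, taking $n=p-1$. Since $p\in\N$, the integer $n=p-1$ satisfies $n\geq p-1$, so Theorem \ref{thm:cayley-spec-flow-formula} gives
\[
\textup{sf}(U_\bullet)=(-1)^{p-1}\frac{1}{2\pi i}\int_0^1\Tr\Big(U_t^*\frac{\d}{\d t}(U_t)(U_t-\Id)^{p-1}\Big)\,\d t .
\]
Convergence of this integral is automatic here. Because the loop is $C^1$ on the compact interval $[0,1]$, the map $t\mapsto U_t^*U_t'(U_t-\Id)^{p-1}$ is continuous into $\L^1$ by H\"older, so the integrand is a continuous scalar function.

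Next I compare integrands. Lemma \ref{lem:det-p-to-det} (equivalently Lemma \ref{lem_derivative_computation}, with $\textup{Det}_p(U_t)=\textup{Det}(\Id+R_t)$) identifies the logarithmic derivative of $\textup{Det}_p(U_t)$ with $(-1)^{p-1}\Tr\bigl(U_t^{-1}(U_t-\Id)^{p-1}U_t'\bigr)$. Cyclicity of the trace, together with the fact that $U_t^{-1}=U_t^*$ commutes with $(U_t-\Id)^{p-1}$, rewrites this as $(-1)^{p-1}\Tr\bigl(U_t^*U_t'(U_t-\Id)^{p-1}\bigr)$. That is exactly the integrand above, with the same sign.

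There is a sign point to watch. The displayed formula in Lemma \ref{lem:det-p-to-det} is written with $(\Id-U_t)^{p-1}$, which differs from $(U_t-\Id)^{p-1}$ by $(-1)^{p-1}$. I will use the version coming directly from Lemma \ref{lem_derivative_computation}, since its proof gives $(U_t-\Id)^{p-1}$. With that choice the two integrands coincide, including the sign $(-1)^{p-1}$.

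Finally, $\textup{Det}_p(U_t)=\textup{Det}(\Id+R_t)\neq 0$ because $\Id+R_t$ is invertible, so the quotient is defined for every $t$. It is continuous on $[0,1]$, hence the almost-everywhere identity of Lemma \ref{lem:det-p-to-det} holds everywhere and the integrals agree, which gives the theorem.

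The main obstacle is this sign bookkeeping between the two lemmas, together with making sure that $t\mapsto \textup{Det}_p(U_t)$ is genuinely $C^1$, so that the quotient is an honest derivative and not merely defined almost everywhere. For the latter I would invoke the trace-norm differentiability of $R_t$ from Lemma \ref{lem_derivative_computation} and the Fréchet differentiability of $\textup{Det}$ on $\Id+\L^1$.

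As a check, the right-hand side equals $\frac{1}{2\pi i}$ times the change in a continuous branch of $\log\textup{Det}_p(U_t)$ around the closed loop. It is therefore an integer winding number, consistent with the result.
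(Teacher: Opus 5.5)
Your proposal is correct and follows the paper's own route. The paper's proof simply compares the $n=p-1$ case of Theorem \ref{thm:cayley-spec-flow-formula} with the logarithmic-derivative identity of Lemma \ref{lem:det-p-to-det}. Your extra checks (convergence of the integral, nonvanishing of $\textup{Det}_p(U_t)=\textup{Det}(\Id+R_t)$, and pointwise rather than a.e.\ equality via the chain rule) fill in details the paper leaves implicit.

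You are also right about the sign. The displayed formula in Lemma \ref{lem:det-p-to-det} should read $(-1)^{p-1}\Tr\bigl(U_t^*U_t'(U_t-\Id)^{p-1}\bigr)$. This is what its proof (via Lemma \ref{lem_derivative_computation}) actually gives, and the normalising loop $U^{(k)}_t$ confirms it.
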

\begin{proof}
Comparing the integral with the result of Theorem \ref{thm:cayley-spec-flow-formula} completes the proof.
\end{proof}

We record the following result which follows immediately from equality \eqref{eq:Det-reduced}.

\begin{lemma}\label{lem:logdet_p_lodget}
Suppose that $U_\bullet: [0,1] \to \mathcal{U}_1$ is a trace-class path of unitaries. Then 
$$\frac{\frac{d}{dt}\textup{Det}_p(U_t)}{\textup{Det}_p(U_t)}
=
\frac{\frac{d}{dt}\textup{Det}(U_t)}{\textup{Det}(U_t)}
+ \frac{d}{dt}\sum_{\ell=1}^{p-1} \frac{(-1)^\ell}{\ell}
\Tr\big((U_t-\Id)^\ell\big).$$
\end{lemma}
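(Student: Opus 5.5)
The plan is to take the logarithm of \eqref{eq:Det-reduced} and differentiate in $t$. Since $U_t-\Id\in\L^1\subset\L^q$ for every $q\geq 1$, \eqref{eq:Det-reduced} applies for each $t$ and gives
\[
\textup{Det}_p(U_t)=\textup{Det}(U_t)\exp\Big(\sum_{\ell=1}^{p-1}\frac{(-1)^\ell}{\ell}\Tr\big((U_t-\Id)^\ell\big)\Big).
\]
I will write $E(t):=\sum_{\ell=1}^{p-1}\frac{(-1)^\ell}{\ell}\Tr\big((U_t-\Id)^\ell\big)$, so that $\textup{Det}_p(U_t)=\textup{Det}(U_t)\,e^{E(t)}$.

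First I check that $E$ is differentiable. The path $t\mapsto U_t$ is differentiable in trace norm, so each power $(U_t-\Id)^\ell$ is trace-norm differentiable by the Leibniz rule, as in the proof of Lemma \ref{lem_derivative_computation}. Because $\Tr$ is a continuous linear functional on $\L^1$, $E$ is a differentiable scalar function. Next, $\textup{Det}(U_t)$ is differentiable and never zero, since $U_t$ is invertible; this is the $p=1$ case of Lemma \ref{lem:det-p-to-det}.

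Then I apply the product rule:
\[
\frac{\d}{\d t}\textup{Det}_p(U_t)=\Big(\frac{\d}{\d t}\textup{Det}(U_t)\Big)e^{E(t)}+\textup{Det}(U_t)\,E'(t)\,e^{E(t)}.
\]
Dividing by $\textup{Det}_p(U_t)=\textup{Det}(U_t)e^{E(t)}\neq0$ gives exactly the stated identity.

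This works pointwise and needs no choice of branch of $\Log$, so the jump discontinuities of $\Log\textup{Det}$ do not cause trouble. The only point that needs care is the trace-norm differentiability of the powers, and that is routine.
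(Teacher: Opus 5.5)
Your proposal is correct and follows the same route as the paper, which records this lemma as an immediate consequence of \eqref{eq:Det-reduced} without further argument. Your product-rule computation supplies the omitted details: trace-norm differentiability of $E$, and differentiability and non-vanishing of $\textup{Det}(U_t)$.
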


\subsection{Spectral flow for general paths}
\label{subsec:paths}

So far we have considered spectral flow for loops of unitaries. In this section, we consider spectral flow for path of $p$-Schatten unitaries. Here the idea is to connect the endpoints of the path to the identity via a simple path. 

Let $U_\bullet$ be a $C^1$-path of unitaries in $\mathcal{U}_p$ for some $p\geq 1$ which is not closed (i.e. not a loop). Set $U_0=e^Y$, $U_1=e^Z$ for some skew $Y,Z\in\L^p$. Define paths $V_t=e^{tY}$ and $W_t=e^{(1-t)Z}$ for $t\in[0,1]$. The choice of $Y,Z$ is not unique and we address the choices below.

\begin{defn}
\label{defn:spec-flow-path}
With $U_\bullet$ a path in $\mathcal{U}_p$ and $V_\bullet$, $W_\bullet$ as above, we define
\[
\textup{sf}_{Y,Z}(U_\bullet):=\textup{sf}(V_\bullet*U_\bullet*W_\bullet)
\]
where $*$ denotes concatenation.
\end{defn}

\begin{prop}
\label{prop:path-sf-formula}
Recall $\Theta,\Xi$ defined in Equations \eqref{eq:they-called-him-Theta} and \eqref{eq:they-called-him-Xi} respectively.
With $U_\bullet$, $V_\bullet$, $W_\bullet$ as in Definition \ref{defn:spec-flow-path}, for any $n\geq p-1$ we have the formulae
\begin{align*}
\textup{sf}_{Y,Z}(U_\bullet)
&=\int_0^1\alpha(U^*_t\dot{U}_t)\,dt+\Theta(U_0)-\Theta(U_1)\\
&=\int_0^1\beta(U^*_t\dot{U}_t)\,dt+\Xi(U_0)-\Xi(U_1)\\
&=\frac{(-1)^n}{2\pi i} \int_0^1 \Tr(U_t^*\dot U_t(U_t-\Id)^{n}) dt+\frac{(-1)^n}{2\pi i} \int_0^1 \Tr(V_t^*\dot V_t(V_t-\Id)^{n}) dt\\
&\qquad+\frac{(-1)^n}{2\pi i} \int_0^1 \Tr(W_t^*\dot W_t(W_t-\Id)^{n}) dt\\
&=\frac1{2\pi i}\Big[\int_0^1\frac{\frac{d}{dt}\textup{Det}_p(U_t)}{\textup{Det}_p(U_t)}\,dt+\int_0^1\frac{\frac{d}{dt}\textup{Det}_p(V_t)}{\textup{Det}_p(V_t)}\,dt+\int_0^1\frac{\frac{d}{dt}\textup{Det}_p(W_t)}{\textup{Det}_p(W_t)}\,dt\Big].
\end{align*}
\end{prop}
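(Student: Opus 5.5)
The plan is to reduce everything to Theorem \ref{thm:cayley-spec-flow-formula} applied to the closed loop $V_\bullet*U_\bullet*W_\bullet$. This concatenation starts at $V_0=\Id$, passes through $V_1=e^Y=U_0$, runs along $U_\bullet$ to $U_1=e^Z=W_0$, and ends at $W_1=\Id$. It is a piecewise $C^1$ loop in $\mathcal{U}_p$ based at $\Id$. By Definition \ref{defn:spec-flow-path} and Theorem \ref{thm:cayley-spec-flow-formula}, $\textup{sf}_{Y,Z}(U_\bullet)$ equals the integral of $\alpha_n$ (respectively $\beta_r$) over this loop. Here we use that the integrals over a concatenation split as sums over the pieces, after reparametrising each piece to $[0,1]$, which does not change the integral of a one-form. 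This immediately gives the third displayed equality, the one written as the sum of three trace integrals over $U_\bullet,V_\bullet,W_\bullet$.

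For the first two equalities, I identify the $V$ and $W$ contributions with $\Theta$ and $\Xi$. By definition \eqref{eq:they-called-him-Theta}, with $U_0=e^Y$ and the path $e^{tY}=V_t$, we have $\Theta(U_0)=\int_0^1\alpha(V_t^*\dot V_t)\,dt$. For $W_t=e^{(1-t)Z}$, the reversal $t\mapsto W_{1-t}=e^{tZ}$ is exactly the path used to define $\Theta(U_1)$. Reversing orientation flips the sign, so $\int_0^1\alpha(W_t^*\dot W_t)\,dt=-\Theta(U_1)$. The same argument applies with $\beta$ and $\Xi$, up to the normalising constant $-iC_r2^{-2r-1}$. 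This constant should be absorbed by reading $\Xi$ as normalised so that $d\Xi=\beta$, as at the end of the proof of Lemma \ref{lem:exactness}. In the statement, $\Theta,\Xi$ must be understood as computed using the specific logarithms $Y,Z$; the remark after Lemma \ref{lem:exactness} notes the dependence on that choice.

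For the determinant formula, I apply Lemma \ref{lem:det-p-to-det} with $p$ replaced by $n+1$ when $n$ is an integer, noting $\mathcal{U}_p\subset\mathcal{U}_{n+1}$. The identity $(-1)^{n}\Tr(U^*\dot U(\Id-U)^{n})=\Tr(U^*\dot U(U-\Id)^{n})$ holds up to the sign $(-1)^n$. Matching the sign conventions gives each trace integral as the logarithmic derivative of $\textup{Det}_{n+1}$. Taking $n=p-1$ yields $\textup{Det}_p$, and the identity holds almost everywhere on each of the three pieces.

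I expect the main obstacle to be bookkeeping rather than analysis. The orientation sign for the $W$ piece has to come out right, and the normalisation of $\Xi$ has to be matched. Piecewise differentiability at the junction points is harmless, since Theorem \ref{thm:cayley-spec-flow-formula} is formulated for piecewise $C^1$ loops and the integrals are additive.
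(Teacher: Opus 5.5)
Your proposal is correct and follows the paper's argument: apply Theorem~\ref{thm:cayley-spec-flow-formula} (and, for the last line, Theorem~\ref{thm:spec-flow-formula-det} via Lemma~\ref{lem:det-p-to-det}) to the loop $V_\bullet*U_\bullet*W_\bullet$, and read off the geodesic pieces as $\Theta(U_0)$ and $-\Theta(U_1)$ (respectively the normalised $\Xi$); your direct identification from the definitions with the fixed logarithms $Y,Z$ is more accurate than the paper's appeal to $d\Theta=\alpha$, since $\Theta$ is not a single-valued primitive along $U_\bullet$. The one step to tighten is the determinant sign: instead of ``matching conventions'', use the identity $\frac{\d}{\d t}\textup{Det}_{n+1}(U_t)/\textup{Det}_{n+1}(U_t)=(-1)^n\Tr\big(U_t^*\dot U_t(U_t-\Id)^n\big)$ obtained in the proof of Lemma~\ref{lem:det-p-to-det} from Lemma~\ref{lem_derivative_computation}, because the displayed statement of Lemma~\ref{lem:det-p-to-det} has $\Id-U_t$ where $U_t-\Id$ is meant and taken literally would give the wrong sign for odd $n$.
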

\begin{proof}
These follow from 
Theorems \ref{thm:cayley-spec-flow-formula} and \ref{thm:spec-flow-formula-det} 
and the relations $d\Theta=\alpha$ and $d\Xi=\beta$.
\end{proof}

The functions $\Theta,\Xi$ play the same role as the eta and kernel correction terms \cite{CP1,CPotSuk} in spectral flow formulae for paths of self-adjoint Fredholm operators.

\begin{rmk}
\label{rmk:path-convention}
We now address the dependence of spectral flow for a (non-closed) path on the choice of the paths connecting endpoints to the identity. In the self-adjoint case, any two choices of admissible paths in the affine space between two fixed points are homotopic \cite{CP1}. In particular, there is no ambiguity when connecting two endpoints of the path to the same point. In the case of unitaries, this is no longer the case and two different loops starting and ending at the identity may have different spectral flow. 

In Definition \ref{defn:spec-flow-path} we have made a choice to connect the endpoints to the identity via geodesics. 
 Recall, that the exponential map $\exp:T\mathcal{U}_p\to \mathcal{U}_p$ is surjective, but not injective. Thus in general our definition of spectral flow for a non-closed path depends on how we represent the endpoints. 
 
 Nevertheless, when the endpoints are finite rank perturbations of the identity, we can make a convention to fix these choices. The tangent cut locus of $U(n)$ is \cite[Section 3]{Sakai77}
\[
Ad_G\Big\{\frac{i\pi}{X}\sum_{j=1}^nx_jE_{jj}:\,\sum_jx_j^2=1,\ X=\max_j|x_j|\Big\}\subset T_{{\rm Id}}U(n)
\]
where the $E_{jj}$ are the standard diagonal matrix units. 

Given a unitary finite rank perturbation of the identity $U$, we may diagonalise $U$ to be of the form ${\rm diag}(e^{i\theta_1},\dots,e^{i\theta_n},1,1,\dots)$. If $U$ is a cut point, then at least one $e^{i\theta_j}=-1$. We then choose to represent this point as $-1=e^{i\pi}$ (not $e^{-i\pi}$).
This choice is consistent with the (discontinuous) principal branch of $\Log$ extended to the negative real axis from above as in Equation \eqref{eq:log-is-good}. We will adopt this normalisation in the next section.
\end{rmk}

%
%
%

\section{Application to scattering theory}
\label{sec:Lev-det}

In this section we apply spectral flow for unitaries to the scattering operator $S$ of Schr\"{o}dinger systems. In this setting, the scattering matrix $S(\lambda)$, $\lambda\in[0,\infty)$  is a differentiable path of $p$-Schatten unitaries. Using Theorem \ref{thm:cayley-spec-flow-formula}, we will reformulate  Levinson's theorem as a spectral flow formula.  The interesting feature here is that as a function of energy, $\lambda\mapsto S(\lambda)-{\rm Id}$ is pointwise trace-class, but the trace of $S^*(\lambda)S'(\lambda)$ is not integrable. This makes it impossible to apply the classical winding number formula for the spectral flow in this setting. Including additional powers of $S(\lambda)-\Id$ in the integral formula for the spectral flow provides the appropriate correction.  

\subsection{Scattering theory for Schr\"{o}dinger operators}

Consider the free Laplacian $H_0=-\sum_j\partial_j^2$ acting on $L^2(\R^d)$ 
with domain the second Sobolev space $H^{2}(\R^d)$. For $V \in C_c^\infty(\R^d)$ we can define the Schr\"{o}dinger operator 
$H = H_0 +V$,
also with domain $H^{2}(\R^d)$. The spectrum of $H_0$ is entirely absolutely continuous and given by $\sigma_{ac}(H_0) = [0,\infty)$ and the spectrum of $H$ consists of the absolutely continuous part $\sigma_{ac}(H) = \sigma_{ac}(H_0) = [0,\infty)$ and a finite number of non-positive eigenvalues with finite multiplicity.

For $z \in \C \setminus \R$ we denote the resolvent of $H_0$ by
$R_0(z) = (H_0-z)^{-1}$,
considered  as a bounded operator from $H^{0}(\R^d)$ to $H^{2}(\R^d)$. 
Let $v = |V|^\frac12$ and $u = \textup{sign}(V)$. Then for $\lambda > 0$ the limiting absorption principle \cite[Theorems 4.1 and 4.2]{agmon75} tells us that the limits
\begin{align}\label{eq:SLAP}
v R_0(\lambda \pm i0) v  &:= \lim_{\eps \to 0} vR_0(\lambda \pm i \eps) v \quad \textup{ and } \quad (u+vR_0(\lambda+i0) v)^{-1} := \lim_{\eps \to 0} (u+vR_0(\lambda+i\eps) v)^{-1}
\end{align}
exist in the topology of $\B(L^2(\R^d))$. 

The wave operators are
$W_\pm := \mathop{\slim}_{t \to \pm \infty} \e^{itH} \e^{-it H_0}$,
and satisfy the following properties (see \cite[Theorem 1.6.2]{yafaev10} and \cite[Proposition 2.1]{alexander24}).
\begin{lemma}
The wave operators are isometries and  $\textup{Range}(W_\pm) = P_{ac}(H)L^2(\R^d)$. Hence they are Fredholm operators with index
\begin{align}
\textup{Index}(W_\pm) &= -N,
\end{align}
where $N$ is the total number of eigenvalues (counted with multiplicity) of $H$.
Moreover the scattering operator 
$S = W_+^* W_-$
is a unitary operator which commutes with $H_0$
\end{lemma}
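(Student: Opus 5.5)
The plan is to derive everything from the definitions of $W_\pm$ and standard facts about the Schr\"odinger pair $(H_0,H)$ with $V\in C_c^\infty(\R^d)$. The statement has four parts: isometry, range, index, and the properties of $S$.

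\textbf{Isometry and range.}
1. The isometry property is immediate. For each $t$ the operator $\e^{itH}\e^{-itH_0}$ is unitary, so the strong limit $W_\pm$ preserves norms.
2. For the range, I will invoke asymptotic completeness for short-range (here compactly supported, smooth) potentials, e.g. \cite[Theorem 1.6.2]{yafaev10}. This gives existence and completeness of $W_\pm$, namely $\textup{Range}(W_\pm)=P_{ac}(H)L^2(\R^d)$.
3. The intertwining relation $\e^{isH}W_\pm=W_\pm\e^{isH_0}$ follows by shifting $t\mapsto t+s$ in the limit. It shows the range lies in the absolutely continuous subspace of $H$, and completeness gives equality.
4. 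I also need the absence of singular continuous spectrum and of embedded positive eigenvalues. These come from the limiting absorption principle \eqref{eq:SLAP} together with Kato's theorem for compactly supported $V$. Hence $P_{ac}(H)=\Id-P_{pp}(H)$, where $P_{pp}(H)$ projects onto the span of the eigenvectors of the finitely many non-positive eigenvalues.

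\textbf{Index.}
1. Since $W_\pm$ is an isometry, $\ker W_\pm=\{0\}$.
2. Its range is closed with cokernel $P_{pp}(H)L^2(\R^d)$. This space has dimension $N<\infty$, because there are finitely many eigenvalues, each of finite multiplicity.
3. Therefore $W_\pm$ is Fredholm with $\textup{Index}(W_\pm)=0-N=-N$.
4. The main point needing care is that the zero-energy eigenvalue, if present, is counted in $N$ and lies outside the range. This holds because eigenvectors are orthogonal to $P_{ac}(H)L^2$.

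\textbf{Properties of $S$.}
1. From $W_\pm^*W_\pm=\Id$ and $W_\pm W_\pm^*=P_{ac}(H)$ (the range projection), we get
\[
S^*S=W_-^*W_+W_+^*W_-=W_-^*P_{ac}(H)W_-=W_-^*W_-=\Id,
\]
since $P_{ac}(H)W_-=W_-$.
2. Symmetrically, $SS^*=W_+^*P_{ac}(H)W_+=\Id$, so $S$ is unitary.
3. Commutation follows from the intertwining relations:
\[
\e^{isH_0}S=\e^{isH_0}W_+^*W_-=W_+^*\e^{isH}W_-=W_+^*W_-\e^{isH_0}=S\e^{isH_0}.
\]
Since $S$ commutes with the unitary group $\e^{isH_0}$, it commutes with $H_0$ in the sense of functional calculus.

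\textbf{Main obstacle.}
The only genuinely nontrivial input is asymptotic completeness together with the absence of singular continuous spectrum. I will not reprove these; I will cite \cite{yafaev10} and \cite[Proposition 2.1]{alexander24}, whose hypotheses are satisfied by $V\in C_c^\infty$.
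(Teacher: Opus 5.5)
Your proposal is correct and matches the paper, which gives no separate argument and simply cites \cite[Theorem 1.6.2]{yafaev10} and \cite[Proposition 2.1]{alexander24}. Those references supply exactly the inputs you rely on: asymptotic completeness, and the spectral structure $P_{ac}(H)=\Id-P_{pp}(H)$ with $\dim P_{pp}(H)L^2(\R^d)=N<\infty$. The deductions you add --- trivial kernel and closed range for the isometries, $S^*S=SS^*=\Id$ from $W_\pm W_\pm^*=P_{ac}(H)$, and commutation with $H_0$ via the intertwining relations --- are the standard steps implicit in those citations.
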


Define the unitary operator $F_0: L^2(\R^d) \to L^2(\R^+) \otimes L^2(\Sf^{d-1})$ by
\begin{align}
[F_0 f](\lambda,\omega) &= 2^{-\frac12} \lambda^{\frac{d-2}{4}} (2\pi)^{-\frac{d}{2}} \int_{\R^d} \e^{-i \lambda^\frac12 \langle x, \omega \rangle} f(x) \, \d x.
\end{align}
The operator $F_0$ diagonalises $H_0$ as multiplication by the spectral variable in $\R^+$ (see \cite[Section 1.2]{yafaev10}), and so there exists a family of unitary operators $\{S(\lambda)\}_{\lambda \in \R^+}$ acting on $L^2(\Sf^{d-1})$, defined by the relation
$[S(\lambda) f](\lambda,\omega) = [F_0 S F_0^* f](\lambda,\omega)$.
The operator $S(\lambda)$ is called the scattering matrix at energy $\lambda$
 and has the representation
\begin{align}\label{eq:stat-scat-mat}
S(\lambda) &= \textup{Id} - 2\pi i F_0(\lambda) v (u + vR_0(\lambda+i0) v)^{-1} v F_0(\lambda)^*,
\end{align}
where  $F_0(\lambda)v: L^2(\R^d)\to L^2(\Sf^{d-1})$ is given by $
[F_0(\lambda)vf](\omega) = [F_0 vf](\lambda,\omega)$.

In the following proposition we collect some basic properties of the scatting matrix ensuring that it is a path of unitaries for which we can consider the spectral flow formulae. 

\begin{prop}\cite[Proposition 8.1.5 and Proposition 8.1.9]{yafaev10}
\label{prop:SM-basic}
Suppose that $V \in C_c^\infty(\R^d)$. We have that $S(\lambda)-\textup{Id} \in \L^p(L^2(\Sf^{d-1}))$ for all $p \in \N$ and $S(\lambda)$ is $k$-times differentiable in $\L^p(L^2(\Sf^{d-1}))$ for any $k\in \mathbb{N}$.  Furthermore,
\begin{align}\label{eq:SM_high_energy_estimate}
\| S(\lambda)-\textup{Id}\|_p &\leq C \lambda^{-\frac12+\frac{d-1}{2p}},
\end{align}
for some constants $C>0$ and $\lambda\geq \lambda_0>0$. In particular,  $S(\lambda) \to \textup{Id}$ in $\L^q(L^2(\Sf^{d-1}))$ for all $q \geq d$ and the function $\lambda\mapsto \Tr_{L^2(\Sf^{d-1})}((S(\lambda) - \textup{Id})^p)$ is absolutely integrable on $[0,\infty)$ for $p>d+1$. 
\end{prop}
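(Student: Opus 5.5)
Since Proposition~\ref{prop:SM-basic} is quoted from \cite[Propositions 8.1.5 and 8.1.9]{yafaev10}, the plan is to derive each clause from the stationary representation \eqref{eq:stat-scat-mat} rather than reprove scattering theory from scratch. Write $S(\lambda)-\Id=-2\pi i\,F_0(\lambda)v\,M(\lambda)\,vF_0(\lambda)^*$ with $M(\lambda)=(u+vR_0(\lambda+i0)v)^{-1}$. By the limiting absorption principle \eqref{eq:SLAP}, $M(\lambda)$ is bounded on $L^2(\R^d)$ for $\lambda>0$. The Schatten properties will then come from the trace operator $F_0(\lambda)v:L^2(\R^d)\to L^2(\Sf^{d-1})$.

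\textbf{Step 1: Schatten membership.} Because $V\in C_c^\infty$, the kernel $(\omega,x)\mapsto \lambda^{(d-2)/4}e^{-i\lambda^{1/2}\langle x,\omega\rangle}v(x)$ of $F_0(\lambda)v$ is smooth in $\omega$ with compact $x$-support. Hence $F_0(\lambda)v$ composed with any power of $(1-\Delta_{\Sf^{d-1}})^{k}$ is Hilbert--Schmidt, and so $F_0(\lambda)v\in\L^q$ for every $q>0$. The product is then in every $\L^p$.

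\textbf{Step 2: differentiability.} Differentiating in $\lambda$ only produces extra polynomial factors $\langle x,\omega\rangle$, which stay compactly supported in $x$. The resolvent term $vR_0(\lambda+i0)v$ is $C^\infty$ in $\lambda>0$ in operator norm, since its kernel is analytic in $\lambda^{1/2}$ and multiplied by compactly supported $v$; the inverse $M(\lambda)$ inherits this smoothness. The product rule then gives $C^k$ differentiability in every $\L^p$ on compact subsets of $(0,\infty)$.

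\textbf{Step 3: high-energy estimate.} Use $\|M(\lambda)\|\le C$ for large $\lambda$, which follows because $\|vR_0(\lambda+i0)v\|=O(\lambda^{-1/2})$ under Agmon's estimate. Then bound
\[
\|S(\lambda)-\Id\|_p\le C\,\|F_0(\lambda)v\|_{2p}^2 .
\]
Interpolate between the two endpoint bounds:
\[
\|F_0(\lambda)v\|_{\B}^2=\|vF_0(\lambda)^*F_0(\lambda)v\|=O(\lambda^{-1/2}),
\]
which is the trace-restriction bound with the $\lambda^{(d-2)/2}$ normalisation, and
\[
\|F_0(\lambda)v\|_2^2=O(\lambda^{(d-2)/2}).
\]
Interpolation gives $\|F_0(\lambda)v\|_{2p}^2\le \|F_0(\lambda)v\|_{\B}^{2(1-1/p)}\|F_0(\lambda)v\|_2^{2/p}$. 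The exponent comes out as $-\tfrac12+\tfrac{d-1}{2p}$.

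\textbf{Step 4: consequences.} Taking $q\ge d$ in the estimate makes the exponent negative, so $S(\lambda)\to\Id$ in $\L^q$. For the trace, $|\Tr((S-\Id)^p)|\le\|S-\Id\|_p^p\le C\lambda^{-p/2+(d-1)/2}$, which is integrable at infinity exactly when $p>d+1$. Near $0$ the trace is bounded by Step 1 together with continuity down to $\lambda=0$.

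\textbf{Main obstacle.} The hardest step is the uniform operator-norm decay of $F_0(\lambda)v$ in Step 3. Step 1 alone gives only $\lambda$-growing Schatten bounds. I would first try the Agmon--Kato trace lemma, which gives a uniform bound on $\sup_\lambda\|F_0(\lambda)\langle x\rangle^{-s}\|$ for $s>1/2$. A second, related difficulty is continuity of $M(\lambda)$ at $\lambda=0$, where zero-energy resonances can occur. Only boundedness of $\Tr(\cdot)$ near $0$ is needed, so I would handle that point by the cited \cite{yafaev10} results.
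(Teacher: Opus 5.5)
The paper gives no proof of its own here; it simply quotes Yafaev. Your Steps 1--3 are in substance the standard argument behind such results, and they are correct.
\begin{itemize}
\item $F_0(\lambda)v$ lies in every Schatten class by the Sobolev argument on $\Sf^{d-1}$.
\item Smoothness on $(0,\infty)$ follows from the product rule.
\item With $A=F_0(\lambda)v$, the inequality $\|A\|_{2p}^{2p}=\Tr((A^*A)^p)\le\|A\|^{2(p-1)}\|A\|_2^2$, together with $\|A\|_2^2=c\,\lambda^{(d-2)/2}$ and $\|A\|^2=O(\lambda^{-1/2})$, gives exactly the exponent $-\frac12+\frac{d-1}{2p}$.
\end{itemize}

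What you call the main obstacle is not one. By Stone's formula, $F_0(\lambda)^*F_0(\lambda)=\frac{1}{2\pi i}\big(R_0(\lambda+i0)-R_0(\lambda-i0)\big)$. Hence $\|F_0(\lambda)v\|^2\le\frac1\pi\max_\pm\|vR_0(\lambda\pm i0)v\|=O(\lambda^{-1/2})$, by the same Agmon estimate you already use to bound $M(\lambda)$.

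Be careful with the phrasing in your last paragraph. A merely \emph{uniform} bound on $\|F_0(\lambda)\langle x\rangle^{-s}\|$ would only give $\|S(\lambda)-\Id\|_p=O(\lambda^{(d-2)/(2p)})$, which does not decay. You need the $\lambda^{-1/4}$ decay, which the trace lemma does give once the factor $\lambda^{(d-2)/4}$ in $F_0(\lambda)$ is accounted for. Also, the Neumann-series bound on $M(\lambda)$ requires $u^2=1$, i.e.\ the convention $u=1$ on $\{V=0\}$.

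The one claim your argument does not deliver is integrability of $\Tr((S(\lambda)-\Id)^p)$ near $\lambda=0$. Step 1 is pointwise in $\lambda>0$ and gives no uniformity as $\lambda\to0$. ``Continuity down to $\lambda=0$'' is exactly the threshold problem, since $M(\lambda)$ blows up at $0$ when there are zero-energy eigenvalues or resonances.
\begin{itemize}
\item For $d=1$ the fibre is $\C^2$ and there is nothing to prove.
\item For $d\ge5$, the crude bound $\|S(\lambda)-\Id\|_1\le 2\pi\|M(\lambda)\|\,\|F_0(\lambda)v\|_2^2=O(\lambda^{(d-2)/2}\|M(\lambda)\|)$, combined with Jensen's $\|M(\lambda)\|=O(\lambda^{-1})$, suffices.
\item For $2\le d\le 4$, the low-energy behaviour of $M(\lambda)$ depends on the resonance structure. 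It has to be read off from the threshold expansions, e.g.\ $M(\lambda)=L(\lambda)+G(\lambda)$ with $G(\lambda)$ finite rank.
\end{itemize}
In every dimension $d\ge2$ this is genuinely extra input, not a consequence of Steps 1--3. The paper supplies it separately, in the proposition immediately following Theorem~\ref{thm:scat-mat-zero}, which shows that $S(\lambda)$ has a trace-norm limit as $\lambda\to0$. You should cite that result, or the threshold literature it relies on, for the behaviour at $0$ rather than Step 1.
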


By Proposition \ref{prop:SM-basic} we have that $S(\lambda)$ is a locally differentiable path of $\L^1(L^2(\Sf^{d-1}))$-class unitaries, but the limit $S(\infty)=\Id$ holds only in $\L^d(L^2(\mathbb{S}^{d-1}))$.

The final item we require before we can apply the spectral flow formula of Theorem \ref{thm:cayley-spec-flow-formula} is the low energy behaviour of $S(\bullet)$. %
The low energy behaviour is dictated by the presence (or absence) of zero energy resonances of $H$, namely distributional solutions of $H\psi=0$, and whether $d\geq 2$. 
%
\begin{defn}\label{defn:resonances}
Suppose that $V \in C_c^\infty(\R^d)$. If $d \neq 2$ we say there is an $s$-resonance if there exists a non-zero bounded distributional solution to $H\psi = 0$. If $d = 2$ we say there is a $p$-resonance if there exists a non-zero distributional solution $\psi$ to $H\psi = 0$ with $\psi \in L^q(\R^2) \cap L^\infty(\R^2)$ for some $q > 2$. We say that there is an $s$-resonance if there exists a non-zero bounded distributional solution $\psi$ to $H\psi = 0$ with $\psi \notin L^q(\R^2)$ for all $q < \infty$.
\end{defn}
General bounds on the resolvent of $H$ \cite{jensen80} show that there can be no resonances for dimension $d \geq 5$. The naming convention for the types of resonances is due to the fact that for spherically symmetric potentials, $s$-resonances can only occur with angular momentum zero and $p$-resonances with angular momentum one (see \cite{bolle88, KS80}).

The following result describes the low energy behaviour of the scattering matrix in the norm of $\B(L^2(\Sf^{d-1}))$. The low energy behaviour of the scattering operator has been obtained in dimension $d = 1$ in \cite[Theorem 4.1]{bolle85} (see also \cite[Proposition 9]{kellendonk08} and \cite[Theorem 2.15]{zworski19}), in dimension $d = 2$ in \cite[Theorem 4.3]{bolle88} and \cite[Theorem 1.1]{richard21}, in dimension $d = 3$ in \cite[Section 5]{jensen79} and in dimension $d \geq 4$ in \cite[Theorem 2.15]{AR23}. 
\begin{thm}\label{thm:scat-mat-zero}
Suppose that $V \in C_c^\infty(\R^d)$. Then for $d \neq 1,3$ we have $S(0) = \textup{Id}$. For $d = 3$ we have $S(0) = \textup{Id}-2P_s$, where $P_s$ is the projection onto spherical harmonics of order zero in $L^2(\Sf^2)$ if there exists a resonance and $P_s = 0$ otherwise. For $d = 1$ we have
\begin{align*}
S(0) &= \begin{pmatrix} 0 & -1 \\ -1 & 0 \end{pmatrix}
\end{align*}
if there does not exist a resonance and if there does exist a resonance, there exists $c_+, c_- \in \R \setminus\{0\}$ with $c_+^2+c_-^2 = 1$ and
\begin{align*}
S(0) &= \begin{pmatrix} 2 c_+ c_- & c_+^2-c_-^2 \\ c_-^2 - c_+^2 & 2 c_+ c_- \end{pmatrix}.
\end{align*}
\end{thm}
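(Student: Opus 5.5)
The statement collects known results, so the plan is to give one uniform mechanism and then carry out the dimension-by-dimension bookkeeping, relying on the cited low-energy resolvent expansions (Jensen–Kato, Jensen–Nenciu, and \cite{bolle85,bolle88,jensen79,AR23,richard21}). The starting point is the stationary formula \eqref{eq:stat-scat-mat},
\[
S(\lambda)-\Id=-2\pi i\,F_0(\lambda)v\,M(\lambda)^{-1}\,vF_0(\lambda)^*,\qquad M(\lambda):=u+vR_0(\lambda+i0)v.
\]
The first step is to record the size of the outer factors. From the explicit kernel of $F_0$, $F_0(\lambda)v$ has operator norm $O(\lambda^{(d-2)/4})$, and as $\lambda\to0$ it converges, after this rescaling, to the rank-one map $f\mapsto c_d\int v f\,\d x$ into the constants on $\Sf^{d-1}$. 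The next-order terms involve the moments $\int x_j v f$, which correspond to first-order spherical harmonics. The second step is to expand $M(\lambda)^{-1}$ as $\lambda\to0$ using the expansion $R_0(\lambda+i0)=\sum_k\lambda^{k/2}G_k$ (with additional $\log\lambda$ terms in even dimensions). This yields the structure of $M(\lambda)^{-1}$ as a regular part plus singular finite-rank parts, whose ranges are described by the zero-energy eigenfunctions and resonances of Definition~\ref{defn:resonances}.

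The third step treats $d\ge5$ and the regular parts in every dimension. If $M(0)$ is invertible, which is the generic situation in $d\ge3$, then $S(\lambda)-\Id=O(\lambda^{(d-2)/2})\to0$ in $d\ge3$. When zero is an eigenvalue, $M(\lambda)^{-1}$ has a singular term $\lambda^{-1}Q$, with $Q$ a finite-rank projection onto $\{v\psi: H\psi=0,\ \psi\in L^2\}$. I will show that this term does not survive multiplication by the outer factors. For a genuine $L^2$ eigenfunction one has $\int v\cdot v\psi=\int V\psi=0$, and in $d\ge5$ also the first moments vanish. This pushes the contribution to order $\lambda^{(d-2)/2}\lambda^{-1}\lambda^{1/2}$ or better, which tends to zero. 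Since there are no resonances for $d\ge5$, this gives $S(0)=\Id$ there.

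The fourth step is the case analysis for $d=1,2,3,4$. For $d=3$, a resonance contributes $\lambda^{-1/2}\,|v\phi\rangle\langle v\phi|$-type terms with $\int V\phi\neq0$. Against the outer factors of total order $\lambda^{1/2}$ this gives a nonzero limit that is rank one on the constants. Using the normalisation identities in the expansion, I will check that this limit is exactly $-2P_s$. In $d=4$ and $d=2$ the singular terms come with $\log\lambda$ factors, namely $(\lambda\log\lambda)^{-1}$ and $(\log\lambda)^{-1}$. These are always dominated by the powers of $\lambda$ from $F_0(\lambda)v$, after using $\int V\psi=0$ or the vanishing of first moments where needed, so $S(0)=\Id$. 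For $d=1$ the fibre is $\C^2$ and the outer factors are $O(\lambda^{-1/4})$ each. Here I will do an explicit two-by-two computation, following \cite{bolle85}: with no resonance $M(\lambda)^{-1}$ has a $\lambda^{1/2}$ rank-one leading term giving total reflection with coefficient $-1$. With a resonance $\psi$ having asymptotic values $c_\pm$ at $\pm\infty$, normalised so that $c_+^2+c_-^2=1$, one obtains the stated matrix.

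I expect the main obstacle to be the even dimensions $d=2,4$, together with the precise constants in $d=1,3$. In $d=2,4$ the Jensen–Nenciu expansions have several nested singular projections, namely $s$-resonances, $p$-resonances and eigenvalues, with mixed $\lambda^{k}$ and $\log\lambda$ terms. One must verify that every singular term is annihilated or dominated after sandwiching by $F_0(\lambda)v$. For this I would lean directly on the orthogonality relations satisfied by the ranges of the singular projections, as tabulated in \cite{bolle88,AR23}.
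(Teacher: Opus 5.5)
The paper gives no proof of this theorem. It simply collects the dimension-specific results of \cite{bolle85}, \cite{bolle88,richard21}, \cite{jensen79} and \cite{AR23}.

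Your route is the mechanism behind those references:
\begin{itemize}
\item expand $F_0(\lambda)v=\lambda^{(d-2)/4}(A_0+\lambda^{1/2}A_1+\cdots)$, with $A_0$ the zeroth moment into the constants and $A_1$ the first moments;
\item expand $M(\lambda)^{-1}$ at threshold;
\item kill the singular pieces using moment conditions on the ranges of the obstruction projections.
\end{itemize}
It is also the same splitting the paper uses in the next proposition to upgrade to trace-norm limits. Because your outer factors are Hilbert--Schmidt and the singular parts are finite rank, that upgrade comes essentially for free in your approach.

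Your $d=1$ (no resonance) and $d=3$ bookkeeping is correct. In $d=3$, let $S_1$ be the projection onto $\ker(u+vG_0v)$. The resonance part of $M(\lambda)^{-1}$ is $\lambda^{-1/2}\tau^{-1}$ times the projection onto $S_1v$, where $\tau=\frac{i}{4\pi}\|S_1v\|^2$ comes from $vG_1v=\frac{i}{4\pi}|v\rangle\langle v|$. Sandwich with $A_0f=2^{-1/2}(2\pi)^{-3/2}(\int vf)\,1$ and use $|1\rangle\langle 1|=4\pi P_s$; this gives exactly $-2P_s$. What your route does not buy is independence from the literature: the full singular structure in $d=2,4$ and the $c_\pm$ formula in $d=1$ are still imported.

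Two of your order counts are wrong.

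\textbf{The case $d\ge5$.} It is false that $L^2$ zero modes satisfy $\int V\psi=0$ or have vanishing first moments. Since $|x|^{2-d}$ is square integrable at infinity for $d\ge5$, nothing forces this. For a radial $V$ with an $s$-wave zero-energy eigenfunction, Newton's theorem gives $\psi=b|x|^{2-d}$ outside $\mathrm{supp}\,V$ with $b\propto\int V\psi\neq0$. This is exactly why there are no resonances there. The claim is also unnecessary: $\|F_0(\lambda)v\|^2\,\|M(\lambda)^{-1}\|=O(\lambda^{(d-4)/2})\to0$ already, which is what the paper uses. In $d=3,4$ the condition $\int V\psi=0$ is true and needed. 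In $d=3$ you must use it on both sides, since the one-sided count $\lambda^{1/2}\lambda^{-1}\lambda^{1/2}$ is only $O(1)$.

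\textbf{The case $d=2$.} This step fails as written. There $\lambda^{(d-2)/4}=1$, so $F_0(\lambda)v$ supplies no decay by itself, and $(\log\lambda)^{-1}$ is not a singular term. The actual singular terms of $M(\lambda)^{-1}$ are:
\begin{itemize}
\item a term growing like $\log\lambda$ on the $s$-resonance space;
\item a $(\lambda\log\lambda)^{-1}$ term on the $p$-resonance space;
\item a $\lambda^{-1}$ term on the eigenspace.
\end{itemize}
All the decay must come from orthogonality. Every bounded zero-energy solution in $d=2$ has $\int V\psi=0$, i.e.\ the obstruction projections lie under the complement of the projection onto $v$. So each outer factor contributes $\lambda^{1/2}$ on these spaces. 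Eigenfunctions additionally have vanishing first moments, giving $\lambda$ per side. The three contributions are then $O(\lambda\log\lambda)$, $O(|\log\lambda|^{-1})$ and $O(\lambda)$, with cross terms handled the same way. In particular $S(\lambda)\to\Id$ only at rate $|\log\lambda|^{-1}$, both generically and with a $p$-resonance.

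With these corrections your plan goes through.
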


In the following, we shall require a little more detailed analysis, showing that the values of $S(0)$ in Theorem \ref{thm:scat-mat-zero} are, in fact, the trace-class limit of $S(\lambda)$ as $\lambda\to 0$. The result below is folklore and we briefly outline the proof for completeness. 

\begin{prop}Let, as before, $V\in C_c^\infty(\mathbb{R}^d)$. Suppose that $d\geq 2$ and $d\neq 3$. Then $\lim_{\lambda\to 0} S(\lambda)=\Id$ in the trace-class norm. If $d=3$, then $\lim_{\lambda\to 0} S(\lambda)=\Id-2P_s$ in the trace-class norm. For $d=1$, $S(0)=\lim_{\lambda\to 0}S(\lambda)$ in the trace-class norm.
\end{prop}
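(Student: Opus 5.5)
The plan is to upgrade the operator norm convergence $S(\lambda)\to S(0)$ of Theorem \ref{thm:scat-mat-zero} to trace-class convergence. We use the structure of the stationary representation \eqref{eq:stat-scat-mat},
\[
S(\lambda)-\Id=-2\pi i\,F_0(\lambda)v\,M(\lambda)\,vF_0(\lambda)^*,\qquad M(\lambda):=(u+vR_0(\lambda+i0)v)^{-1},
\]
together with the fact that $V$ has compact support, so $v$ is smooth and compactly supported. The key point is that the trace-class property comes entirely from the outer factors $F_0(\lambda)v$ and $vF_0(\lambda)^*$. These are integral operators from $L^2(\R^d)$ to $L^2(\Sf^{d-1})$ with kernels
\[
2^{-1/2}\lambda^{(d-2)/4}(2\pi)^{-d/2}e^{-i\lambda^{1/2}\langle x,\omega\rangle}v(x),
\]
which are smooth in $\omega$, compactly supported in $x$, and depend analytically on $\lambda^{1/2}$. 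Since smoothing operators into $L^2(\Sf^{d-1})$ with compactly supported weights are Schatten class of every order, the factor $F_0(\lambda)v$ lies in $\L^2$. We can strengthen this: writing $F_0(\lambda)v=(1+\Delta_{\Sf})^{-N}(1+\Delta_{\Sf})^{N}F_0(\lambda)v$ with $N$ large, the first factor is trace class and the second is Hilbert--Schmidt with norm locally bounded in $\lambda$. Hence $\lambda^{-(d-2)/4}F_0(\lambda)v$ extends continuously to $\lambda=0$ in $\L^2$ (indeed in every $\L^q$), and $\|F_0(\lambda)v\|_2=O(\lambda^{(d-2)/4})$.

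For the case with no singular behaviour of $M(\lambda)$ (generic $d\ge 2$ with no resonance or zero eigenvalue), we write $S(\lambda)-\Id$ as a product $A(\lambda)M(\lambda)B(\lambda)$ with $A,B$ convergent in $\L^2$ and $M(\lambda)$ convergent in operator norm. Hölder's inequality then gives convergence in $\L^1$ to $A(0)M(0)B(0)$. This limit is $0$ when $d\ge 3$ because of the prefactor $\lambda^{(d-2)/2}\to0$, and it must coincide with the operator-norm limit $S(0)-\Id$ from Theorem \ref{thm:scat-mat-zero}. Thus the trace-class limit exists and equals the one identified there.

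The main obstacle is the presence of zero-energy resonances or eigenvalues, and the low dimensions $d=1,2$, where $M(\lambda)$ blows up as $\lambda\to0$ and $vR_0v$ has logarithmic terms for $d=2$. In these cases we invoke the Jensen--Nenciu type expansions used in the references for Theorem \ref{thm:scat-mat-zero} (\cite{bolle85,bolle88,jensen79,AR23,richard21}). They give
\[
M(\lambda)=\sum_j \lambda^{-a_j}(\log\lambda)^{b_j}\,Q_jC_jQ_j+\text{(bounded, norm-convergent)},
\]
where the $Q_j$ are finite rank projections. The singular terms therefore enter $S(\lambda)-\Id$ only through finite rank operators $F_0(\lambda)vQ_j$. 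On a finite-dimensional range, operator-norm convergence and trace-norm convergence are equivalent up to the rank. We first check that $F_0(\lambda)vQ_j$ converges in $\L^1$ after the same rescaling used in the norm computation; this holds because the Taylor expansion of $e^{-i\lambda^{1/2}\langle x,\omega\rangle}$ converges in $\L^1$ when the weight $v$ is compactly supported. Then each singular contribution converges in $\L^1$ to its norm limit, which is the finite-rank piece (for example $-2P_s$ when $d=3$, or the $2\times2$ matrix when $d=1$, where $L^2(\Sf^0)=\C^2$ makes everything trivially trace class). The remaining regular part is handled by the Hölder argument above.

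Combining the two parts, $S(\lambda)\to S(0)$ in $\L^1$, with $S(0)$ as in Theorem \ref{thm:scat-mat-zero}. The delicate step is verifying the cancellations in the singular expansion at trace-norm level rather than operator-norm level, and for that we would follow the operator-norm proofs in the cited works line by line, replacing each norm estimate by a Hölder/$\L^2$ estimate on the outer factors.
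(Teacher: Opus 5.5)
Your proposal is correct and follows essentially the paper's route. You use the Hilbert--Schmidt control $\|F_0(\lambda)v\|_2=O(\lambda^{(d-2)/4})$, split $(u+vR_0(\lambda+i0)v)^{-1}$ into a norm-convergent part handled by H\"older plus finite-rank singular pieces, and upgrade operator-norm to trace-norm convergence on uniformly bounded rank; the paper also short-cuts $d\geq 5$ with the bound $\|(u+vR_0(\lambda+i0)v)^{-1}\|=O(\lambda^{-1})$. The ``delicate step'' you flag is unnecessary, since the singular part is $S(\lambda)-\Id$ minus the regular part, both of which converge in operator norm, so its uniformly bounded rank already gives trace-norm convergence, exactly as the paper argues.
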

\begin{proof}Since for $d=1$ the scattering matrix is a finite-dimensional marix, we immediately have the equality in the trace-class norm.

By equality \eqref{eq:stat-scat-mat} we have that 
$$
S(\lambda)-\Id =- 2\pi i F_0(\lambda) v (u + vR_0(\lambda+i0) v)^{-1} v F_0(\lambda)^*.
$$

It follows from \cite[equality 1.2.5]{yafaev10} that for  \(f\in L^2(\mathbb{R}^d)\), we have that 
\[
(F_0(\lambda) v f)(\omega)
=\frac{1}{\sqrt{2}}\,\lambda^{\frac{d-2}{4}}
\,\widehat{(vf)}(\sqrt{\lambda}\,\omega).
\]
Therefore, the integral kernel $K(\lambda;\omega,x)$ of the operator $F_0(\lambda) v$ is given by 
\[
K(\lambda;\omega,x)
=\frac{1}{\sqrt{2}}(2\pi)^{-d/2}
\lambda^{\frac{d-2}{4}}
e^{-i\sqrt{\lambda}\,\omega\cdot x}v(x).
\]
A direct computation implies that the Hilbert-Schmidt norm can be estimated as 
\begin{equation}\label{eq_F_0_lambda_at_zero}
\|F_0(\lambda)v\|_2= \mathcal{O}(\lambda^{\frac{d-2}{4}})\to 0,\quad \lambda\to 0.
\end{equation}

If $d\geq 5$, then the operator
$(u + vR_0(\lambda+i0) v)^{-1}$ has asymptotic expansion as $\lambda\to 0$ in the operator norm satisfying $(u + vR_0(\lambda+i0) v)^{-1}=\mathcal{O}(\frac1\lambda)$  (see \cite{AR23}). Therefore, the asymptotic in \eqref{eq_F_0_lambda_at_zero} guarantees that $S(\lambda)\to \Id$ in the trace-class norm. 

For dimensions $2\leq d\leq 4$ the key is the expansion of $ (u + vR_0(\lambda+i0) v)^{-1}$ as $\lambda\to 0$.
By the threshold expansions of $(u + vR_0(\lambda+i0) v)^{-1}$, one may write
\[
(u + vR_0(\lambda+i0) v)^{-1}=L(\lambda)+G(\lambda),
\]
where $L(\lambda)$ denotes the leading part and $G(\lambda)$ is finite rank for $\lambda$ near $0$ (see \cite{bolle88} and \cite{richard21} for $d=2$,  \cite[Section 5]{jensen79} for $d=3$, and \cite{AR23} for $d=4$). 
Hence
\[
S(\lambda)-I
=
-2\pi i\,F_0(\lambda)\,v\,L(\lambda)\,v\,F_0(\lambda)^*
-
2\pi i\,F_0(\lambda)\,v\,G(\lambda)\,v\,F_0(\lambda)^*.
\]

The behaviour of the first term is already known from the proofs of the operator-norm
limits of $S(\lambda)$ at threshold: in dimensions $d=2$ and $d=4$ it tends to $0$ ( \cite[Theorem 4.3]{bolle88} and \cite[Theorem 1.1]{richard21} for $d=2$ and  \cite[Theorem 2.15]{AR23} for $d=4$),
while in dimension $d=3$ it converges to the finite-rank resonance correction
$-2P_s$ \cite[Section 5]{jensen79}. Thus no additional argument is needed for the leading part.

For the second term, since $G(\lambda)$ is finite rank, the operator
\[
F_0(\lambda)\,v\,G(\lambda)\,v\,F_0(\lambda)^*
\]
is finite rank as well. Therefore, the existing operator-norm convergence results in fact prove trace-norm convergence. 
It follows that the operator-norm threshold limits of $S(\lambda)$ automatically upgrade
to trace-norm limits:
\[
S(\lambda)\to I \quad\text{in }\mathcal L_1 \ \text{for } d=2,4,
\]
and
\[
S(\lambda)\to I-2P_s\quad\text{in }\mathcal L_1 \ \text{for } d=3.\qedhere
\]
\end{proof}

\subsection{Determinants of the scattering operator}

To relate Levinson's theorem to spectral flow we firstly need to establish certain formulas for regularised  determinants of the scattering matrix.
%
%
%

By Proposition \ref{prop:SM-basic}, we have that $S(\lambda)-\textup{Id} \in \L^p(L^2(\Sf^{d-1}))$ for all $p \in \N$, and so the determinant of $S(\lambda)$ is well-defined. We have the following more precise statement, due to Guillop\'{e} \cite[Theorem III.1]{guillope81}.

\begin{lemma}\label{lem:guillope-det-S}
Suppose that $V = q_1 q_2$ with $q_1,q_2 \in C_c^\infty(\R^d)$. Then for all $\lambda > 0$ and $p \geq \lfloor \frac{d}{2} \rfloor$ we have
\begin{align*}
&\textup{Det} \left(S(\lambda) \right) \\
&= \frac{\textup{Det}_p \left( \textup{Id}+q_1 R_0(\lambda - i 0 ) q_2 \right)}{\textup{Det}_p \left( \textup{Id}+q_1 R_0(\lambda+i0) q_2 \right)} \exp \left( \sum_{\ell = 1}^{p-1} \frac{(-1)^\ell}{\ell} \textup{Tr} \left( \left(q_1 R_0(\lambda+i0) q_2 \right)^\ell - \left(q_1 R_0(\lambda-i0) q_2 \right)^\ell \right) \right).
\end{align*}
\end{lemma}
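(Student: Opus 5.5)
Since Lemma \ref{lem:guillope-det-S} is attributed to Guillop\'e, the natural plan is to reproduce his argument. It has three parts: a stationary factorisation of $S(\lambda)$ through the Birman--Schwinger operator, Sylvester's identity $\textup{Det}(\Id+AB)=\textup{Det}(\Id+BA)$ to move the determinant from $L^2(\Sf^{d-1})$ to $L^2(\R^d)$, and the multiplicativity of the regularised determinant. Put $K_\pm:=q_1R_0(\lambda\pm i0)q_2$. These are Hilbert--Schmidt for $d\le 3$ and in general lie in $\L^p(L^2(\R^d))$ for $p>d/2$, by the limiting absorption principle and Kato--Seiler--Simon type bounds on the compactly supported truncations. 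The first step is the analogue of \eqref{eq:stat-scat-mat} with the factorisation $V=q_1q_2$ in place of $v\,u\,v$:
\[
S(\lambda)=\Id-2\pi i\,F_0(\lambda)q_2(\Id+K_+)^{-1}q_1F_0(\lambda)^*.
\]
The second step uses Stone's formula, $R_0(\lambda+i0)-R_0(\lambda-i0)=2\pi i\,F_0(\lambda)^*F_0(\lambda)$, so that
\[
K_+-K_-=2\pi i\,q_1F_0(\lambda)^*F_0(\lambda)q_2 .
\]

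Next I apply Sylvester's identity with $A=F_0(\lambda)q_2(\Id+K_+)^{-1}$ and $B=q_1F_0(\lambda)^*$. Since $F_0(\lambda)q_2$ is Hilbert--Schmidt (its kernel is smooth and compactly supported in $x$, as in \eqref{eq_F_0_lambda_at_zero}), the product $AB$ is trace class, and the identity gives
\[
\textup{Det}(S(\lambda))=\textup{Det}\bigl(\Id-(\Id+K_+)^{-1}(K_+-K_-)\bigr)=\textup{Det}\bigl((\Id+K_+)^{-1}(\Id+K_-)\bigr).
\]
The operator $(\Id+K_+)^{-1}(\Id+K_-)$ is $\Id$ plus a trace-class operator, although $K_\pm$ individually are only in $\L^p$. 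Only the difference $K_+-K_-$ needs to be trace class, and it is, because it factors through $F_0(\lambda)$.

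Finally I need to express $\textup{Det}\bigl((\Id+K_+)^{-1}(\Id+K_-)\bigr)$ through $\textup{Det}_p$. For $\L^p$ operators $A_1,A_2$ with $(\Id+A_1)(\Id+A_2)-\Id\in\L^1$, one has the product formula
\[
\textup{Det}_p((\Id+A_1)(\Id+A_2))=\textup{Det}_p(\Id+A_1)\,\textup{Det}_p(\Id+A_2)\exp(\Tr X_p(A_1,A_2)),
\]
where $X_p$ is an explicit polynomial correction (Simon, \emph{Trace ideals}, Thm 9.2). Rather than handle $X_p$ directly, I would prove the claimed identity by a homotopy/analytic-continuation argument. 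Replace $q_2$ by $zq_2$ with $z\in\C$; both sides then become meromorphic in $z$. For $|z|$ small, every operator involved is close to $\Id$, so the reduction \eqref{eq:Det-reduced} applies term by term after formally expanding $\Log\textup{Det}(\Id+K)=\sum_\ell\frac{(-1)^{\ell+1}}{\ell}\Tr K^\ell$. In that expansion the terms with $\ell\ge p$ assemble into $\Log\textup{Det}_p$, and the terms with $\ell<p$ pair up into the differences $\Tr(K_+^\ell-K_-^\ell)$, which are finite because $K_+-K_-\in\L^1$. The identity then holds for small $z$ and extends to $z=1$ by uniqueness of analytic continuation, given that $\textup{Det}_p(\Id+zK_+)\neq 0$ along the way. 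Where it vanishes, both sides have matching zeros, or one avoids these points via a path in $z$.

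The main obstacle is justifying the trace-class membership of each $\Tr(K_+^\ell-K_-^\ell)$ for $\ell<p$. I would handle it by telescoping,
\[
K_+^\ell-K_-^\ell=\sum_j K_+^j(K_+-K_-)K_-^{\ell-1-j},
\]
which is trace class since $K_+-K_-\in\L^1$ and $K_\pm$ are bounded. The remaining care is the choice of $p\ge\lfloor d/2\rfloor$ guaranteeing $K_\pm\in\L^p$, and the analyticity of $z\mapsto\textup{Det}_p(\Id+zK_\pm)$.
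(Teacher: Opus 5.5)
The paper does not prove this lemma; it is quoted from Guillop\'e \cite[Theorem III.1]{guillope81}, so your argument has to stand on its own. The first half does. The stationary formula, Stone's formula and Sylvester's identity correctly give $\Det(S(\lambda))=\Det\bigl((\Id+K_-)(\Id+K_+)^{-1}\bigr)$. With your choice of $A,B$ the factors come out in this order, which does not change the determinant.

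\textbf{The gap is in the last step, which carries the real content of the lemma.} You justify the small-$z$ identity by saying all operators are close to $\Id$, so \eqref{eq:Det-reduced} applies term by term. But \eqref{eq:Det-reduced} needs $zK_\pm\in\L^1$. Smallness of $z$ gives convergence of the series, not trace-class membership. For $d\geq2$ and every $z\neq0$, $\Log\Det(\Id+zK_\pm)$ is undefined, as are individual traces such as $\Tr K_\pm$. The ``formal expansion'' is therefore only a heuristic. What must be proved is the genuinely noncommutative statement
\[
\Log\Det\bigl((\Id+zK_+)^{-1}(\Id+zK_-)\bigr)=\sum_{\ell\geq1}\frac{(-1)^{\ell+1}}{\ell}z^\ell\,\Tr\bigl(K_-^\ell-K_+^\ell\bigr),\qquad |z|\ \text{small}.
\]
The telescoping bound you single out as the main obstacle only shows that the right-hand side makes sense.

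One way to close the gap is to compare logarithmic derivatives.
\begin{itemize}
\item Left side: put $\Id+A(z)=(\Id+zK_+)^{-1}(\Id+zK_-)$. Then $(\Id+A)^{-1}A'$ is the conjugate, by $\Id+zK_-$, of
$K_-(\Id+zK_-)^{-1}-K_+(\Id+zK_+)^{-1}=(\Id+zK_+)^{-1}(K_--K_+)(\Id+zK_-)^{-1}\in\L^1$.
So the two operators have the same trace.
\item Right side: use $\frac{\d}{\d z}\Log\Det_p(\Id+zK)=(-1)^{p-1}z^{p-1}\Tr\bigl(K^p(\Id+zK)^{-1}\bigr)$, which is the computation of Lemma \ref{lem_derivative_computation}. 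Combine it with
$K(\Id+zK)^{-1}=\sum_{\ell=1}^{p-1}(-z)^{\ell-1}K^\ell+(-z)^{p-1}K^p(\Id+zK)^{-1}$.
This produces the same trace.
\end{itemize}
Both sides equal $1$ at $z=0$. Both are analytic on $\{z:\Id+zK_+\ \text{invertible}\}$, the complement of a discrete set, which is connected and contains $1$. The identity theorem then gives equality at $z=1$.

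Alternatively, you can avoid $z$ altogether by approximation.
\begin{itemize}
\item Set $K_-^{(n)}=P_nK_-P_n$ and $K_+^{(n)}=K_-^{(n)}+(K_+-K_-)$, with finite-rank projections $P_n\to\Id$ strongly.
\item For these trace-class operators, the identity is just multiplicativity of $\Det$ plus \eqref{eq:Det-reduced}.
\item Pass to the limit using continuity of $\Det_p$ on $\Id+\L^p$, and $\L^1$-convergence of $(\Id+K_+^{(n)})^{-1}(K_+-K_-)$ and of the telescoped differences.
\end{itemize}

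\textbf{Range of $p$.} Contrary to your last sentence, $p\geq\lfloor d/2\rfloor$ does not guarantee $K_\pm\in\L^p$; you need $p>d/2$. For example:
\begin{itemize}
\item For $d=3$, $p=1$, the operators $K_\pm$ are Hilbert--Schmidt but in general not trace class.
\item For $d=4$, $p=2$, they are not even Hilbert--Schmidt, since the kernel of $R_0$ behaves like $|x-y|^{-2}$.
\end{itemize}
In these cases $\Det_p(\Id+K_\pm)$ is undefined. The hypothesis as printed is too weak; the paper itself later takes $p\geq2$ for $d=2,3$ and $p\geq d$ for $d\geq4$. Your argument, once repaired, proves the lemma for $p>d/2$. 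You should state that restriction explicitly rather than claim that the printed range suffices.
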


We also recall the following \cite[Propositions 7.1.17, 7.1.22, 9.1.2 and 9.1.3]{yafaev10}
\begin{lemma}\label{lem:det-limits}
Suppose that $V = q_1 q_2$ with $q_1, q_2 \in C_c^\infty(\R^d)$. If  $d = 2,3$ let $p \geq 2$ and if $d \geq 4$ let $p \geq d$. Define for $z \in \C \setminus \R$ the function
\begin{align*}
D_p(z) = \textup{Det}_p(\textup{Id}+q_1 R_0(z) q_2).
\end{align*}
Then we have
$\lim_{|z| \to \infty} D_p(z) = 1$
uniformly in $\textup{Arg}(z)$. By the limiting absorption principle \eqref{eq:SLAP} this limit extends to the positive real axis also. Furthermore, we have $D_p(\lambda \pm i0)$ is continuous in $\lambda$.
\end{lemma}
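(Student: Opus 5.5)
The plan is to work from the standard theory of regularised determinants of Birman--Schwinger operators and to cite the relevant results of Yafaev. First I make sure $D_p(z)$ is well-defined. For $z\in\C\setminus\R$ the operator $q_1R_0(z)q_2$ lies in $\L^p$ once $p>d/2$. This follows from the Kato--Seiler--Simon inequality $\|f(x)g(-i\nabla)\|_p\le C\|f\|_p\|g\|_p$, applied with $f=q_1$ (compactly supported) and $g(\xi)=(|\xi|^2-z)^{-1}$, which lies in $L^p(\R^d)$ exactly when $2p>d$. For $d=2,3$ we have $p\ge2>d/2$, and for $d\ge4$ we have $p\ge d>d/2$, so $D_p(z)$ is defined. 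It is also analytic in $z$, because $z\mapsto q_1R_0(z)q_2$ is analytic in $\L^p$ and $\textup{Det}_p$ is an entire function on $\L^p$.

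Next comes the decay $D_p(z)\to1$. I use the standard bound $|\textup{Det}_p(\Id+A)-1|\le \|A\|_p\exp(C(\|A\|_p+1)^p)$, so it suffices to show $\|q_1R_0(z)q_2\|_p\to0$ as $|z|\to\infty$, uniformly in $\textup{Arg}(z)$. Away from the positive real axis this follows from Kato--Seiler--Simon together with dominated convergence for $\|(|\xi|^2-z)^{-1}\|_{L^p}$. Near the positive axis this estimate breaks down, and this is the main obstacle. There one needs the high-energy limiting absorption bounds of Agmon/Yafaev type. In operator norm these give $\|q_1R_0(\lambda\pm i0)q_2\|\to0$ like $\lambda^{-1/2}$. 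In $\L^p$ one can use the representation $R_0(\lambda+i0)=\textup{P.V.}+i\pi F_0(\lambda)^*F_0(\lambda)$ and estimate each piece in Schatten norm; this is where the restriction $p\ge d$ for $d\ge4$ enters, matching the bound $\|S(\lambda)-\Id\|_p\lesssim\lambda^{-1/2+(d-1)/2p}$ of Proposition \ref{prop:SM-basic}. Rather than redo this, I would invoke Yafaev's Propositions 7.1.17 and 7.1.22, which give uniform $\L^p$-boundedness and decay of $q_1R_0(z)q_2$ on the closed upper and lower half-planes, including the boundary values. The uniformity in the argument then follows from combining the off-axis and near-axis estimates, for instance through a Phragmén--Lindelöf/maximum principle argument applied to the analytic function $D_p-1$ on the sector near the positive axis.

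Finally I treat the boundary values and continuity. The limiting absorption principle \eqref{eq:SLAP}, strengthened to $\L^p$-convergence via Yafaev's Propositions 9.1.2--9.1.3 (the kernel estimates for compactly supported weights), shows that $q_1R_0(\lambda\pm i\eps)q_2\to q_1R_0(\lambda\pm i0)q_2$ in $\L^p$, and that this limit is $\L^p$-continuous in $\lambda>0$. Since $\textup{Det}_p$ is continuous on $\L^p$, it follows that $D_p(\lambda\pm i0)$ exists and is continuous in $\lambda$. By the uniformity of the high-energy bound, the limit $D_p(z)\to1$ also extends to $\lambda\to\infty$ along the real axis.
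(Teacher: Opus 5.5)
\textbf{Gap: the case $d=3$, $p=2$.} Your reduction to $\|q_1R_0(z)q_2\|_p\to0$ is a valid sufficient condition, via $|\Det_p(\Id+A)-1|\le\|A\|_p\exp(C(\|A\|_p+1)^p)$, but it cannot cover every case of the lemma.

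The exponent $-\frac12+\frac{d-1}{2p}$ you quote is negative exactly when $p>d-1$. The hypotheses of the lemma differ from $p\ge d$ in precisely one case, $d=3$, $p=2$, and there your sufficient condition is false. In $\R^3$ the kernel of $q_1R_0(\lambda+i0)q_2$ is $q_1(x)e^{i\sqrt{\lambda}|x-y|}q_2(y)/(4\pi|x-y|)$, so
\[
\|q_1R_0(\lambda+i0)q_2\|_2^2=\frac{1}{16\pi^2}\iint_{\R^3\times\R^3}\frac{|q_1(x)|^2\,|q_2(y)|^2}{|x-y|^2}\,dx\,dy
\]
is independent of $\lambda$. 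Likewise $\|q_1R_0(z)q_2\|_2$ stays bounded away from $0$ as $|z|\to\infty$ with $\mathrm{Im}\sqrt{z}$ bounded.

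Consequences for your argument when $d=3$, $p=2$:
\begin{itemize}
\item you never establish $D_2(\lambda\pm i0)\to1$;
\item the Phragm\'en--Lindel\"of step fails, since it only transports boundary decay into the sector;
\item your final sentence about the real-axis limit fails for the same reason;
\item no proposition of Yafaev can supply the Hilbert--Schmidt decay you want to cite, because that decay does not hold.
\end{itemize}

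\textbf{The missing ingredient} is a cancellation argument for the quadratic trace. Set $K=K(z)=q_1R_0(z)q_2$. Then $\|K\|=O(|z|^{-1/2})$ in operator norm, uniformly in $\mathrm{Im}\sqrt{z}\ge0$, while $\|K\|_2$ stays bounded. Using $|\Tr(K^n)|\le\|K\|^{n-2}\|K\|_2^2$, for $|z|$ large you get
\[
\Det_2(\Id+K)=\exp\Bigl(-\tfrac12\Tr(K^2)+\sum_{n\ge3}\tfrac{(-1)^{n+1}}{n}\Tr(K^n)\Bigr),\qquad \Bigl|\sum_{n\ge3}\tfrac{(-1)^{n+1}}{n}\Tr(K^n)\Bigr|\le C\|K\|\,\|K\|_2^2 .
\]
It then remains to show $\Tr(K(z)^2)\to0$, which is not a norm statement. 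Substituting $y=x+r\omega$ gives $\Tr(K(z)^2)=\int_0^\infty e^{2i\sqrt{z}\,r}\Phi(r)\,dr$, where $\Phi(r)=(16\pi^2)^{-1}\int_{\Sf^2}\int_{\R^3}V(x)V(x+r\omega)\,dx\,d\omega$ is smooth and compactly supported. One integration by parts gives $O(|z|^{-1/2})$ uniformly, boundary values included.

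Equivalently, you could run your argument with $p=3$, where $\|K\|_3\to0$, and use $\Det_2(\Id+K)=\Det_3(\Id+K)\exp(-\tfrac12\Tr(K^2))$. This route still needs $\Tr(K^2)\to0$.

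With this added, the rest of your plan is a reasonable expansion of the bare citation to Yafaev that constitutes the paper's proof: Kato--Seiler--Simon for well-definedness, Schatten-norm decay when $p\ge d$, and $\L^p$-continuity of the boundary values.
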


Combining Lemmas \ref{lem:guillope-det-S} and \ref{lem:det-limits} with equation \eqref{eq:Det-reduced} we obtain the following result.
\begin{prop}\label{prop:scat-limit}
Suppose that $V = q_1 q_2$ with $q_1, q_2 \in C_c^\infty(\R^d)$ and let $S(\lambda)$ be the associated scattering matrix at energy $\lambda$. If  $d = 2,3$ let $p \geq 2$ and if $d \geq 4$ let $p \geq d$. Then 
\begin{align}
\lim_{\lambda \to \infty} \textup{Tr} \left(\sum_{\ell = 1}^{p-1} \frac{(-1)^\ell}{\ell}  (S(\lambda)-\textup{Id})^\ell +\sum_{\ell = 1}^{p-1} \frac{(-1)^\ell}{\ell} \textup{Tr} \left( \left(q_1 R_0(\lambda+i0) q_2 \right)^\ell - \left(q_1 R_0(\lambda-i0) q_2 \right)^\ell \right) \right)&= 0.
\end{align}
\end{prop}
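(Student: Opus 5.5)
The plan is to take logarithms in Guillop\'e's formula (Lemma \ref{lem:guillope-det-S}) and rewrite the full Fredholm determinant $\textup{Det}(S(\lambda))$ through the regularised determinant $\textup{Det}_p(S(\lambda))$ by means of \eqref{eq:Det-reduced}. This is legitimate because $S(\lambda)-\Id\in\L^q$ for every $q\geq 1$ (Proposition \ref{prop:SM-basic}). Solving \eqref{eq:Det-reduced} for $\textup{Det}(S(\lambda))$ gives
\[
\textup{Det}(S(\lambda))=\textup{Det}_p(S(\lambda))\exp\Big(-\sum_{\ell=1}^{p-1}\frac{(-1)^\ell}{\ell}\Tr\big((S(\lambda)-\Id)^\ell\big)\Big).
\]
Setting this equal to the right-hand side of Lemma \ref{lem:guillope-det-S} and rearranging yields
\[
\exp\big(E(\lambda)\big)=\textup{Det}_p(S(\lambda))\,\frac{D_p(\lambda+i0)}{D_p(\lambda-i0)},
\]
where $E(\lambda)$ denotes exactly the quantity inside the limit in the statement, namely the sum of the two trace expressions. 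The hypothesis $p\geq\lfloor d/2\rfloor$ needed for Guillop\'e's lemma is implied by the assumptions $p\geq 2$ for $d=2,3$ and $p\geq d$ for $d\geq4$.

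Next, I let $\lambda\to\infty$ in this identity. By Lemma \ref{lem:det-limits}, $D_p(\lambda\pm i0)\to 1$. For the regularised determinant I use $\textup{Det}_p(U)=\textup{Det}(\Id+h(U-\Id))$ with $h(z)=z^pg(z)$, as in Lemma \ref{lem_derivative_computation}. Then $\|h(S(\lambda)-\Id)\|_1\lesssim \|S(\lambda)-\Id\|_p^p$, which by \eqref{eq:SM_high_energy_estimate} is $O(\lambda^{-p/2+(d-1)/2})\to0$ once $p\geq d$. Continuity of $\textup{Det}$ in trace norm then gives $\textup{Det}_p(S(\lambda))\to1$. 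When $d=2,3$ with $p=2$ or small $p$, I instead use $p'=\max(p,d)$ together with the recursion \eqref{eq:Det-recursion}, or simply note that $\|S(\lambda)-\Id\|_p\to0$ already holds for $p\geq d$. This step needs care in low dimensions, and I would handle it by checking that the exponent $-\tfrac12+\tfrac{d-1}{2p}$ is negative. Altogether, $\exp(E(\lambda))\to1$.

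The main obstacle is passing from $\exp(E(\lambda))\to1$ to $E(\lambda)\to0$, since a priori $E(\lambda)$ could approach $2\pi i k$ for some nonzero $k$. To rule this out I would show that each of the two sums in $E$ tends to $0$ individually. For the first sum, $|\Tr((S(\lambda)-\Id)^\ell)|\leq\|S(\lambda)-\Id\|_\ell^\ell$, which tends to zero for $\ell\geq d$ by \eqref{eq:SM_high_energy_estimate}. For $\ell<d$ this bound fails, so I would instead rely on continuity: $E(\lambda)$ is continuous in $\lambda>0$ by Lemma \ref{lem:det-limits} and the differentiability in Proposition \ref{prop:SM-basic}. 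Hence, for $\lambda$ large, $E(\lambda)$ stays in a single small neighbourhood of one point $2\pi i k$, and the limit is $2\pi i k$. To identify $k=0$, I would compare with the coupling-constant family $V\mapsto sV$, $s\in[0,1]$. Every term of $E$ is continuous in $s$, and $E\equiv0$ at $s=0$. If the convergence $D_p\to1$ is uniform in $s$, which the standard Born-series bounds for $q_1R_0(z)q_2$ in the Schatten norms provide, then the integer $k$ is constant in $s$ and therefore equals $0$. This gives the asserted limit.
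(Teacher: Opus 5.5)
Your skeleton is the paper's: Guillop\'e plus \eqref{eq:Det-reduced} give $\exp(E(\lambda))=\textup{Det}_p(S(\lambda))\,D_p(\lambda+i0)/D_p(\lambda-i0)$. Schatten decay then gives $\textup{Det}_p(S(\lambda))\to1$, and Lemma~\ref{lem:det-limits} gives $D_p(\lambda\pm i0)\to1$.

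\textbf{The case $d=3$, $p=2$.} This case is allowed by the hypotheses, and your argument for it does not work. The exponent $-\frac12+\frac{d-1}{2p}$ in \eqref{eq:SM_high_energy_estimate} is $0$ there, not negative. Passing to $p'=3$ via \eqref{eq:Det-recursion} cannot help either, because $|\textup{Det}_2(U)|=|\textup{Det}(U)|\,|e^{-\Tr(U-\Id)}|=\exp\big(\tfrac12\|U-\Id\|_2^2\big)$ for every $U\in\mathcal{U}_1$. In fact the assertion itself appears to fail in this case.
\begin{itemize}
\item For $p=2$ the resolvent term is $-\Tr\big(q_1(R_0(\lambda+i0)-R_0(\lambda-i0))q_2\big)=-2\pi i\,\Tr\big(F_0(\lambda)VF_0(\lambda)^*\big)$, which lies in $i\R$ since $V$ is real.
\item Unitarity gives $-\Re\Tr(S(\lambda)-\Id)=\tfrac12\|S(\lambda)-\Id\|_2^2$.
\item Hence $\Re E(\lambda)=\tfrac12\|S(\lambda)-\Id\|_2^2$.
\end{itemize}
For $d=3$ and $V\neq0$ a Born-approximation computation indicates this tends to a positive constant. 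The Born term $2\pi F_0(\lambda)VF_0(\lambda)^*$ has Hilbert--Schmidt norm converging to a nonzero limit, and the remainder should be $o(1)$ in Hilbert--Schmidt norm. Equivalently, the total cross-section decays only like $\lambda^{-1}$, so \eqref{eq:SM_high_energy_estimate} is sharp here. The paper's proof has the same defect: it only obtains $\textup{Det}_p(S(\lambda))\to1$ for $p\geq d$. The hypothesis should read $p\ge d$ for all $d\ge2$, and that is all that is used later, where $p=d$ (Lemma~\ref{lem_integral_comp}).

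\textbf{The branch step, for $p\geq d$.} Here your argument is sound, and it departs from the paper only in passing from $\exp(E(\lambda))\to1$ to $E(\lambda)\to0$. The paper takes logarithms factor by factor and asserts each tends to $0$ because eigenvalues accumulate only at $1$. That does not fix a branch, so it does not exclude $E(\lambda)\to2\pi ik$ with $k\neq0$. Your two steps do settle it: continuity on $[\Lambda,\infty)$ gives $E(\lambda)\to2\pi ik$, and the deformation $V\mapsto sV$ with $E\equiv0$ at $s=0$ gives $k=0$.

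This needs inputs that are not in the cited lemmas, and you should state them:
\begin{itemize}
\item continuity of $E_s(\lambda)$ in $(s,\lambda)$;
\item uniformity in $s\in[0,1]$ of \emph{both} $\textup{Det}_p(S_s(\lambda))\to1$ and $D_{p,s}(\lambda\pm i0)\to1$ (you mention only the latter).
\end{itemize}
These come from the Neumann series once $\|vR_0(\lambda\pm i0)v\|\le\frac12$, since the high-energy bounds are then uniform in $s$.

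Lemma~\ref{lem:det-limits} gives continuity of $D_p$, not of the resolvent sum. For the resolvent sum, write $K_+^\ell-K_-^\ell=\sum_jK_+^j(K_+-K_-)K_-^{\ell-1-j}$, where $K_\pm=q_1R_0(\lambda\pm i0)q_2$ and $K_+-K_-=2\pi i\,q_1F_0(\lambda)^*F_0(\lambda)q_2$ is trace class.

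Finally, drop the claim that each of the two sums tends to $0$ separately. They are individually $\pm P_d(\lambda)+o(1)$ with opposite signs. This is a nonzero constant for $d=2$ and unbounded for $d\ge3$ whenever $\int V\neq0$.
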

\begin{proof}
By Proposition \ref{prop:SM-basic} we have  that $S(\lambda) \to \textup{Id}$ in $\L^q(L^2(\Sf^{d-1}))$ for all $q \geq n$. Hence for $p \geq d$, we have that 
$\lim_{\lambda \to \infty} \textup{Det}_p(S(\lambda)) = 1$.
Since $S(\lambda)$ is a compact perturbation of the identity and can only have eigenvalues accumulating at $1$, we further have that
\begin{align*}
\lim_{\lambda \to \infty} \ln \textup{Det}_p(S(\lambda)) &= 0.
\end{align*}
Combining Lemma \ref{lem:guillope-det-S} with  equation \eqref{eq:Det-reduced} we obtain the relation
\begin{align*}
& \textup{Det}_p(S(\lambda)) \\
&=  \textup{Det}(S(\lambda)) \exp \left( \sum_{\ell = 1}^{p-1} \frac{(-1)^\ell}{\ell} \textup{Tr} \left( (S(\lambda)-\textup{Id})^\ell \right) \right) \\
&=  \frac{\textup{Det}_p \left( \textup{Id}+q_1 R_0(\lambda - i 0 ) q_2 \right)}{\textup{Det}_p \left( \textup{Id}+q_1 R_0(\lambda+i0) q_2 \right)} \exp \left( \sum_{\ell = 1}^{p-1} \frac{(-1)^\ell}{\ell} \textup{Tr} \left( \left(q_1 R_0(\lambda+i0) q_2 \right)^\ell - \left(q_1 R_0(\lambda-i0) q_2 \right)^\ell \right) \right) \times \\
& \exp \left( \sum_{\ell = 1}^{p-1} \frac{(-1)^\ell}{\ell} \textup{Tr} \left( (S(\lambda)-\textup{Id})^\ell \right) \right).
\end{align*}
By Lemma \ref{lem:det-limits} we have that 
\begin{align*}
\lim_{\lambda \to \infty} \frac{\textup{Det}_p \left( \textup{Id}+q_1 R_0(\lambda - i 0 ) q_2 \right)}{\textup{Det}_p \left( \textup{Id}+q_1 R_0(\lambda+i0) q_2 \right)} &= 1.
\end{align*}
Thus we obtain also the limit
\begin{align*}
\lim_{\lambda \to \infty} \exp \left( \sum_{\ell = 1}^{p-1} \Big(\frac{(-1)^\ell}{\ell}  \textup{Tr} \left((S(\lambda)-\textup{Id})^\ell \right)  +  \textup{Tr} \left( \left(q_1 R_0(\lambda+i0) q_2 \right)^\ell - \left(q_1 R_0(\lambda-i0) q_2 \right)^\ell \Big)\right) \right) &= 1.
\end{align*}
Since $q_1 R_0(\lambda \pm i 0)q_2$ are compact operators, the operators $\textup{Id}+q_1 R_0(\lambda \pm i 0) q_2$ can only have eigenvalues accumulating at $1$ \cite[Corollary 8.8.2]{yafaev10}, from which it follows that
\begin{align*}
\lim_{\lambda \to \infty} \ln \textup{Det}_p \left( \textup{Id}+q_1 R_0(\lambda \pm i 0 ) q_2 \right) &= 0.
\end{align*}
Finally, since $S(\lambda)$ has eigenvalues accumulating only at $1$, we have
\begin{align*}
\lim_{\lambda \to \infty} \ln \exp \left( \sum_{\ell = 1}^{p-1} \frac{(-1)^\ell}{\ell}  \textup{Tr} \left((S(\lambda)-\textup{Id})^\ell \right)  \right) &= 0,
\end{align*}
from which the result follows.
\end{proof}
From \cite[Theorem 3.13]{alexander24} we have the following estimate, which when combined with Lemma~\ref{lem:guillope-det-S} and Proposition \ref{prop:scat-limit} shows that $\textup{Det}(S(\lambda))$ does not necessarily have a limit  as $\lambda\to\infty$.
\begin{lemma}
\label{lem:asymptotic}
Suppose that $q_1, q_2 \in C_c^\infty(\R^d)$ and $V = q_1 q_2$. Let $p$ satisfy the assumptions of Proposition \ref{prop:scat-limit}. Then for all $J \in \N$ and $1 \leq j \leq  J$ there exists $C_j(d,V) \in \C$ such that for sufficiently large $\lambda$ we have the estimate
\begin{align*}
\left|\sum_{\ell = 1}^{p-1} \frac{(-1)^\ell}{\ell} \textup{Tr} \left( \left(q_1 R_0(\lambda+i0) q_2 \right)^\ell - \left(q_1 R_0(\lambda-i0) q_2 \right)^\ell \right) + \sum_{j=1}^J C_j(d,V) \lambda^{\frac{d}{2}-j} \right| &\leq C \lambda^{\frac{d}{2}-J-3}.
\end{align*}
\end{lemma}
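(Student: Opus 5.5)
This estimate is the high-energy asymptotic expansion of the regularised spectral shift quantities. It is imported from \cite[Theorem 3.13]{alexander24}, so the plan is to reconstruct the mechanism behind that expansion rather than reprove it from scratch.

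\textbf{Step 1: reduce to single traces.} Set
\[
T_\ell(\lambda):=\textup{Tr}\big((q_1R_0(\lambda+i0)q_2)^\ell-(q_1R_0(\lambda-i0)q_2)^\ell\big),\qquad 1\le \ell\le p-1 .
\]
By the limiting absorption principle \eqref{eq:SLAP}, each power is trace class once $\ell$ is large relative to $d$. For small $\ell$, where this fails, one only ever needs the \emph{difference}. That difference involves the jump $R_0(\lambda+i0)-R_0(\lambda-i0)=2\pi i\,F_0(\lambda)^*F_0(\lambda)$, which is trace class after sandwiching by $q_1$ and $q_2$, since $F_0(\lambda)q_j$ is Hilbert--Schmidt by the kernel computation leading to \eqref{eq_F_0_lambda_at_zero}. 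Telescoping
\[
A^\ell-B^\ell=\sum_{k}A^k(A-B)B^{\ell-1-k}
\]
writes $T_\ell$ as a finite sum of traces of products containing exactly one such jump factor.

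\textbf{Step 2: expand each term.} Two routes seem available; I would try the second first.
\begin{itemize}[leftmargin=*]
\item Expand each $q_1R_0(\lambda\pm i0)q_2$ at high energy using the free resolvent kernel (Hankel functions) together with stationary phase.
\item Use Fourier variables. Each $T_\ell(\lambda)$ is $\frac{d}{d\lambda}$ of a Birman--Krein-type quantity, namely the $\ell$-th Born term of the spectral shift function $\xi(\lambda)$. The spectral shift function has the classical high-energy expansion $\xi(\lambda)\sim\sum_j c_j\lambda^{d/2-j}$, with coefficients given by heat-invariant integrals of $V$ and its derivatives. This is obtained from the small-$t$ heat trace expansion $\textup{Tr}(e^{-tH}-e^{-tH_0})\sim t^{-d/2}\sum_j a_j t^j$, via a Tauberian or Mellin argument, which is legitimate because $\xi$ is smooth for compactly supported smooth $V$.
\end{itemize}
The finite Born series truncated at order $p-1$ differs from the full $\log\textup{Det}$ by $\ln D_p(\lambda\pm i0)$. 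By Lemma \ref{lem:det-limits}, $\ln D_p(\lambda\pm i0)\to0$, and in fact it decays faster than any power once its own expansion is carried out. Hence the polynomial part is entirely captured by the sum over $\ell\le p-1$. This produces the coefficients $C_j(d,V)$, which are polynomials in $V$ and its derivatives integrated over $\R^d$, matching finitely many heat invariants.

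\textbf{Step 3: control the remainder, which is the main obstacle.} After truncating at $j=J$, the error must be shown to be $O(\lambda^{d/2-J-3})$ uniformly. The Tauberian passage only gives asymptotics, not derivative remainders. I would therefore instead work directly with each Born term written as an oscillatory integral in momentum space, integrating by parts in the angular/radial variables. Smoothness and compact support of $q_1,q_2$ give arbitrary decay of the Fourier transforms, and so a remainder bound of any prescribed order. The loss of three powers relative to the naive next term reflects the bookkeeping in \cite{alexander24}. Since the statement is quoted, one may ultimately invoke \cite[Theorem 3.13]{alexander24} directly for this remainder estimate.
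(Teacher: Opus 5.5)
Your fallback, invoking \cite[Theorem 3.13]{alexander24}, is exactly what the paper does: the lemma is quoted from that source with no argument of its own, so at that level your proposal agrees with the paper. The mechanism you sketch in Step~2 would not stand on its own, however.

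First, $T_\ell$ is not a $\lambda$-derivative of a Born term. Taking logarithms in Lemma~\ref{lem:guillope-det-S} and using the Birman--Krein formula $\Det S(\lambda)=e^{-2\pi i\xi(\lambda)}$ gives, modulo $2\pi i\Z$,
\begin{align*}
\sum_{\ell=1}^{p-1}\frac{(-1)^\ell}{\ell}T_\ell(\lambda)=-2\pi i\,\xi(\lambda)+\big[\log D_p(\lambda+i0)-\log D_p(\lambda-i0)\big].
\end{align*}
So $\frac{(-1)^{\ell+1}}{\ell}T_\ell$ is itself the $\ell$-th Born term of $2\pi i\,\xi$. It is $\Tr(S^*S')=-2\pi i\,\xi'$ that is a derivative, and your reading would shift every exponent by one.

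Second, and decisively, the bracket does not decay faster than any power. The $\ell$-th Born term is homogeneous of degree $\ell$ in $V$ and contributes from order $\lambda^{\frac d2-\ell}$ on. Its leading coefficient is a multiple of $\int V^\ell$; this is precisely where the $\int V^\ell$ heat invariant comes from. Hence the bracket is generically of exact order $\lambda^{\frac d2-p}$. The case $p=2$, $d=3$ makes this stark: the sum is just $-T_1$, and
\begin{align*}
T_1(\lambda)=2\pi i\,\Tr\big(F_0(\lambda)VF_0(\lambda)^*\big)=2\pi i\,\frac{\textup{Vol}(\Sf^{d-1})}{2(2\pi)^d}\,\lambda^{\frac d2-1}\int_{\R^d}V(x)\,\d x
\end{align*}
is an exact monomial. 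Meanwhile the $\lambda^{-1/2}\int V^2$ term of $\xi$ lives entirely in the bracket.

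Two consequences follow. The $C_j$ agree with the heat-trace coefficients only for $j\le p-1$; this is enough for $P_d$, since $\lfloor d/2\rfloor\le p-1$ under the standing assumptions on $p$. More importantly, Route~2 cannot yield the expansion of the truncated sum without an independent expansion of the bracket. That is no easier than expanding each $T_\ell$ directly, as in your Route~1 and Step~3. Note also that smoothness of $\xi$ does not let a Tauberian argument produce a full expansion of $\xi$ from the heat trace; the existence of that expansion has to be established by other means.

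Finally, your comment that the exponent reflects a ``loss of three powers'' reads it backwards. The bound $\lambda^{\frac d2-J-3}$ is two powers \emph{smaller} than the first omitted term $C_{J+1}\lambda^{\frac d2-J-1}$. Taken literally for every $J$, comparing the instances $J$ and $J+1$ forces $C_{J+1}=0$, hence $C_j=0$ for all $j\ge2$. This is harmless when the sum reduces to $-T_1$ ($p=2$, allowed for $d=2,3$). It is false as soon as $T_2$ is present: for $d=4$ it contradicts the nonzero $\int V^2$ constant term in the paper's own $P_4$. The printed remainder should therefore be treated as a misprint, presumably for $O(\lambda^{\frac d2-J-1})$, which is all Proposition~\ref{prop:asymptotic_2} needs. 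Check it against \cite{alexander24} rather than rationalising it.
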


The constants $C_j(d,V)$ are directly related to the expansion of the heat trace $\textup{Tr} \left( \e^{-tH}-\e^{-tH_0} \right)$ as $t \to 0^+$. For our statement of Levinson's theorem in Section \ref{sec:levinson-statement} we need polynomials in the energy with the $C_j(d,V)$ as coefficients as follows.

\begin{defn}\label{defn:high-energy-poly1}
We define the high-energy polynomial $P_d$ by
\begin{align}
P_d(\lambda) &= \sum_{j=1}^{\lfloor \frac{d}{2} \rfloor} C_j(d,V) \lambda^{\frac{d}{2}-j}.
\end{align}
We use the notation $p_d(\lambda) := P_d'(\lambda)$ also.
\end{defn}

We can explicitly compute the lowest order polynomials (see \cite[Section 4]{alexander24}, finding $P_1 = 0$,
\begin{align*}
P_2(\lambda) &= -\frac{ (2\pi i) \textup{Vol}(\Sf^1)}{2(2\pi)^2} \int_{\R^2} V(x) \, \d x, \\
P_3(\lambda) &= -\frac{(2\pi i) \lambda^\frac12 \textup{Vol}(\Sf^2)}{2(2\pi)^3} \int_{\R^3} V(x) \, \d x,  \\
P_4(\lambda) &= - \frac{(2\pi i) \lambda \textup{Vol}(\Sf^3)}{2(2\pi)^4}\! \int_{\R^4} \!V(x)  \d x +\! \frac{(2\pi i) \textup{Vol}(\Sf^3)}{4(2\pi)^4}\! \int_{\R^4}\! V(x)^2 \d x.
\end{align*}

With these notations at hand, we can reformulate Proposition \ref{prop:scat-limit} and  Lemma \ref{lem:asymptotic} as follows.

\begin{prop}
\label{prop:asymptotic_2}
Suppose that $q_1, q_2 \in C_c^\infty(\R^d)$ and $V = q_1 q_2$. Let $p$ satisfy the assumptions of Proposition \ref{prop:scat-limit}. Then there exists a polynomial $P_d$ such that for sufficiently large $\lambda$ we have the estimate
\begin{align*}
\lim_{\lambda\to \infty}\Big(\textup{Tr} \Big(\sum_{\ell = 1}^{p-1} \frac{(-1)^\ell}{\ell}  (S(\lambda)-\textup{Id})^\ell\Big)+P_d(\lambda)\Big)=0.
\end{align*}
\end{prop}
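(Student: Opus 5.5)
The plan is to combine Proposition \ref{prop:scat-limit} with Lemma \ref{lem:asymptotic}, using the definition of $P_d$ in Definition \ref{defn:high-energy-poly1}. Write
\[
A(\lambda):=\textup{Tr}\Big(\sum_{\ell=1}^{p-1}\frac{(-1)^\ell}{\ell}(S(\lambda)-\Id)^\ell\Big),\qquad
B(\lambda):=\sum_{\ell = 1}^{p-1} \frac{(-1)^\ell}{\ell} \textup{Tr} \left( \left(q_1 R_0(\lambda+i0) q_2 \right)^\ell - \left(q_1 R_0(\lambda-i0) q_2 \right)^\ell \right).
\]
Proposition \ref{prop:scat-limit} states exactly that $A(\lambda)+B(\lambda)\to 0$ as $\lambda\to\infty$. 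It therefore suffices to show that $B(\lambda)-P_d(\lambda)\to 0$. Then
\[
A(\lambda)+P_d(\lambda)=\big(A(\lambda)+B(\lambda)\big)-\big(B(\lambda)-P_d(\lambda)\big)\to 0 .
\]

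To control $B$, I would apply Lemma \ref{lem:asymptotic} with $J$ chosen large, say $J\ge \lfloor d/2\rfloor$. This gives
\[
B(\lambda)=-\sum_{j=1}^{J}C_j(d,V)\lambda^{\frac d2-j}+O\big(\lambda^{\frac d2-J-3}\big).
\]
The terms with $j>\lfloor d/2\rfloor$ have exponent $\tfrac d2-j<0$, so they tend to $0$. The error term also tends to $0$, since $\tfrac d2-J-3<0$. Hence $B(\lambda)+\sum_{j\le\lfloor d/2\rfloor}C_j\lambda^{d/2-j}\to 0$.

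The main obstacle is the sign convention. With the lemma as written, this yields $B+P_d\to0$, not $B-P_d\to 0$. Under that reading, the quantity that tends to zero would be $A-P_d$, i.e.\ $P_d$ would enter with a minus sign. I would therefore check the sign convention of $C_j(d,V)$ against \cite[Theorem 3.13]{alexander24}, where the constants are defined. Because the proposition only asserts that some polynomial $P_d$ exists, I would resolve the discrepancy by taking $P_d$ to be the polynomial built from these constants with the sign that makes $B(\lambda)-P_d(\lambda)\to0$. Concretely, this means absorbing the sign into the $C_j$, which is harmless for an existence statement. With that convention fixed, the two limits above combine to give the claim.

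A minor point remains in the borderline case $d$ even with $j=d/2$. That term is a constant, which correctly stays inside $P_d$, so no convergence issue arises. In every dimension, the requirement $J\ge\lfloor d/2\rfloor$ ensures that all discarded terms vanish at infinity.
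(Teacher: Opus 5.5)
Your argument is the paper's. There the proposition is presented as a direct reformulation of Proposition \ref{prop:scat-limit} (i.e.\ $A+B\to0$) and Lemma \ref{lem:asymptotic}, with the terms $j>\lfloor d/2\rfloor$ and the remainder discarded just as you do. You are also right that the display of Lemma \ref{lem:asymptotic}, read literally, gives $B+P_d\to0$ and hence $A-P_d\to0$.

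The step I would not accept is your resolution. Despite the existential wording, $P_d$ here is the polynomial of Definition \ref{defn:high-energy-poly1}. The proposition is used in Lemma \ref{lem_integral_comp} (with $p=d$) precisely in the form $A(\lambda)+P_d(\lambda)\to0$, alongside Theorem \ref{thm:Levinson}, whose $P_d$ and $p_d$ are the same objects. So the sign cannot be ``absorbed into the $C_j$'' without breaking the convergence argument there.

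The slip is in the displayed sign of Lemma \ref{lem:asymptotic}. By Stone's formula $R_0(\lambda+i0)-R_0(\lambda-i0)=2\pi i\,F_0(\lambda)^*F_0(\lambda)$ and the explicit kernel of $F_0(\lambda)$, the $\ell=1$ term of $B$ is
\[
-\Tr\big(q_1(R_0(\lambda+i0)-R_0(\lambda-i0))q_2\big)=-2\pi i\,\Tr\big(F_0(\lambda)VF_0(\lambda)^*\big)=-\frac{2\pi i\,\lambda^{\frac{d-2}{2}}\,\textup{Vol}(\Sf^{d-1})}{2(2\pi)^d}\int_{\R^d}V(x)\,\d x .
\]
The terms with $\ell\geq2$ are of lower order. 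This is exactly the leading term of $P_2,P_3,P_4$ as listed after Definition \ref{defn:high-energy-poly1}. Hence $B-P_d\to0$ with the Definition's $P_d$, and the lemma should read $B-\sum_jC_j(d,V)\lambda^{\frac d2-j}$. Then $A+P_d=(A+B)-(B-P_d)\to0$, exactly as stated. Your fallback choice, taking $P_d$ to be the non-decaying part of the expansion of $B$, is the right one. It simply coincides with Definition \ref{defn:high-energy-poly1} rather than differing from it by a sign.

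A further caution applies equally to the paper's reformulation. Proposition \ref{prop:scat-limit} is only proved for $p\geq d$, since its proof needs $S(\lambda)\to\Id$ in $\L^p$. For $d=3$, $p=2$ it actually fails. Unitarity gives $\mathrm{Re}\,\Tr(S(\lambda)-\Id)=-\frac12\|S(\lambda)-\Id\|_2^2$. For $V\neq0$ this tends to a nonzero constant in three dimensions, because the Born term has Hilbert--Schmidt norm of order $\lambda^0$ there. By contrast, $B$ and $P_3$ are purely imaginary when $p=2$. So your argument is sound for $p\geq d$, which covers the case $p=d$ actually used later.
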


\subsection{Levinson's theorem and spectral flow}\label{sec:levinson-statement}

The following version of Levinson's theorem is the result of many papers by numerous authors, for instance \cite{alexander24, ANRR, AR23-4D, bolle88, bolle77, levinson49}, with the precise statement below taken from \cite[Theorem 5.10]{alexander24}.

\begin{thm}\label{thm:Levinson}
Suppose that $V \in C_c^\infty(\R^d)$ with corresponding scattering operator $S$, and that there are no zero energy resonant states and that $d\geq2$. Then the number of bound states $N$ of $H = H_0+V$ is given by
\begin{align}
- N & =   \frac{1}{2\pi i} \int_0^\infty \left( \textup{Tr}\left(S(\lambda)^*S'(\lambda) \right) - p_d(\lambda) \right) \, \d \lambda - \frac{1}{2\pi i} P_d(0)+N_{res},
\end{align}
where $p_d$ and $P_d$ are as in Definition \ref{defn:high-energy-poly1}
 and 
\begin{align*}
N_{res} &= \begin{cases} \frac12 & \quad \textup{ if } d=1 \textup{ and there are no resonances,} \\
\textup{number of }p\textup{ resonances} & \quad \textup{ if } d = 2, \\
\frac12 & \quad \textup{ if } d = 3 \textup{ and there exists a resonance}, \\
\textup{number of }s\textup{ resonances} & \quad \textup{ if } d = 4, \\
0 & \quad \textup{ otherwise}.
\end{cases}
\end{align*}
\end{thm}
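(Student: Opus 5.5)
The plan is to follow the determinant route used in \cite{alexander24}, and to reduce everything to a winding-number count for a function that is analytic on $\C\setminus[0,\infty)$.

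\textbf{Step 1: express the integrand through a perturbation determinant.} Write $V=q_1q_2$ with $q_1,q_2\in C_c^\infty(\R^d)$; for instance take $q_1=v\chi$ and $q_2=uv$, with $\chi$ a cutoff equal to one on $\operatorname{supp}V$. Put $D_p(z)=\textup{Det}_p(\Id+q_1R_0(z)q_2)$, where $p$ is as in Lemma \ref{lem:det-limits}. Combine Lemma \ref{lem:guillope-det-S} with the $p=1$ case of Lemma \ref{lem:det-p-to-det}, which gives $\Tr(S^*S')=\frac{d}{d\lambda}\log\textup{Det}\,S(\lambda)$. Then, for $\lambda>0$,
\[
\Tr(S(\lambda)^*S'(\lambda))=\frac{d}{d\lambda}\Big[\log D_p(\lambda-i0)-\log D_p(\lambda+i0)+\sum_{\ell=1}^{p-1}\tfrac{(-1)^\ell}{\ell}\Tr\big((q_1R_0(\lambda+i0)q_2)^\ell-(q_1R_0(\lambda-i0)q_2)^\ell\big)\Big].
\]

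\textbf{Step 2: remove the high-energy growth.} Lemma \ref{lem:asymptotic} says the trace sum equals $-P_d(\lambda)+O(\lambda^{d/2-\lfloor d/2\rfloor-1})$, which tends to zero. Subtracting $p_d=P_d'$ therefore leaves an integrand whose antiderivative has a finite limit at infinity. Lemma \ref{lem:det-limits} gives $D_p(\lambda\pm i0)\to1$, so the logarithms there tend to zero. Integrating from $\epsilon$ to $R$ and letting $R\to\infty$, one gets
\[
\int_\epsilon^\infty\big(\Tr(S^*S')-p_d\big)\,\d\lambda-P_d(0)=\big[\log D_p(\lambda+i0)-\log D_p(\lambda-i0)\big]_{\lambda=\epsilon}+(\text{terms vanishing as }\epsilon\to0),
\]
using continuity of the trace sum at $\lambda=0$ to account for the constant term. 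This step produces the $-\frac{1}{2\pi i}P_d(0)$ in the statement.

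\textbf{Step 3: count zeros by the argument principle.} Since $D_p$ is analytic on $\C\setminus[0,\infty)$, the phase jump across $[0,\infty)$ is $2\pi i$ times the winding of $D_p$ around a keyhole contour. This contour consists of the two lips of $[\epsilon,\infty)$, a large circle where $D_p\to1$ (so it contributes nothing), and a small circle of radius $\epsilon$ about the origin. The zeros of $D_p$ in $\C\setminus[0,\infty)$ are exactly the negative eigenvalues of $H$, counted with multiplicity by the Birman--Schwinger principle. Their contribution is $-2\pi i N$, with the sign fixed by orientation. What remains is the small-circle contribution as $\epsilon\to0$.

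\textbf{Step 4: the threshold, which is the main obstacle.} The limit of the small-circle term must be computed from the low-energy expansions of $(u+vR_0(z)v)^{-1}$, case by case in $d$. For $d\ge5$, and for $d=2,4$ without resonances, one must show the argument change around a vanishing circle tends to zero. When zero is an eigenvalue, the $O(1/z)$ singular part must yield exactly the count of zero-energy eigenvalues, so that those are included in $N$. Resonances give the fractional or integer corrections $N_{res}$; for $d=3$ the half-winding matches $S(0)=\Id-2P_s$. I would try first to read the small-circle winding off the leading singular term of $\log D_p(z)$ near $0$, obtained from the expansions in \cite{jensen79,bolle88,AR23}. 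Under the theorem's hypothesis of no resonances, the only surviving threshold contributions are the zero-energy eigenvalues. Assembling these pieces gives the stated formula.
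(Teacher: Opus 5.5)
The paper contains no proof of this statement: it is quoted from \cite[Theorem 5.10]{alexander24}, so your argument has to stand on its own. Its architecture (Guillop\'e's identity, the high-energy expansion, the argument principle for $D_p$ on a keyhole contour) is a legitimate classical route. There is, however, a genuine gap. Step 4 carries the whole content of Levinson's theorem, and it is announced rather than proved. For instance, the claim that a zero eigenvalue of multiplicity $m$ contributes exactly $-m$ needs $D_p(z)=z^m g(z)$ with $g$ continuous and non-vanishing up to $z=0$ in the slit plane. That in turn needs the threshold expansion of $\Id+q_1R_0(z)q_2$ in each dimension, together with non-degeneracy of the leading coefficient on its kernel, and none of this is supplied. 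Moreover, the two concrete threshold claims you do make are false in $d=2$.

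In $d=2$ one has $q_1R_0(z)q_2=-\frac{\log z}{4\pi}Q+B(z)$, with $Q$ rank one, $\Tr Q=\int V$, and $B(z)$ bounded in $\L^2$ as $z\to0$.
\begin{itemize}
\item Since $\Det_2(\Id+B+F)=\Det_2(\Id+B)\,\Det\big(\Id+(\Id+B)^{-1}F\big)\,e^{-\Tr F}$ for trace-class $F$ (when $\Id+B$ is invertible), $D_2(z)$ contains the factor $z^{\frac{1}{4\pi}\int V}$. So in the regular case the clockwise small circle contributes $-\frac{i}{2}\int V=P_2(0)$ in the limit, not $0$.
\item Correspondingly, with $p=2$ the trace sum equals $-\Tr\big(q_1(R_0(\lambda+i0)-R_0(\lambda-i0))q_2\big)=-\frac{i}{2}\int V$ for every $\lambda>0$; that is, it is identically $P_2$. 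It therefore does not vanish at $\lambda=0$, and the right-hand side of your Step 2 identity is missing a term $-P_2(0)$. For $p\geq3$ the trace sum generically even diverges like $\log\lambda$ at $0$, so you must take $p=2$.
\end{itemize}
The two errors cancel, so your final formula survives in $d=2$, but only by accident. The correct bookkeeping is $\int_0^\infty\Tr(S^*S')\,\d\lambda=\lim_{\epsilon\to0}[\log D_2(\epsilon+i0)-\log D_2(\epsilon-i0)]$. This equals the small-circle term, which tends to $P_2(0)$ in the regular case, minus $2\pi i$ times the number of negative eigenvalues.

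There are also two smaller points. First, with Lemma \ref{lem:asymptotic} as printed (trace sum $=-P_d+o(1)$), your Step 1 identity would force you to add $p_d$, not subtract it. What Step 2 needs, and what Propositions \ref{prop:scat-limit} and \ref{prop:asymptotic_2} together give, is trace sum $=P_d+o(1)$. Indeed, for $d=2,3,4$ the $\ell=1$ term alone is exactly $C_1(d,V)\lambda^{d/2-1}$ with the listed coefficients. You should state that you use this sign. Second, $v=|V|^{1/2}$ is not smooth in general, so your $q_1,q_2$ do not satisfy the $C_c^\infty$ hypotheses of Lemmas \ref{lem:guillope-det-S}--\ref{lem:asymptotic}; take $q_1=\chi$ and $q_2=V$ instead.
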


\bigskip

\begin{lemma}\label{lem_integral_comp}
Suppose that $q_1, q_2 \in C_c^\infty(\R^d)$ and $V = q_1q_2$. Then the integral 
$$
\int_0^\infty \Tr\!\big(S^*(\lambda) S'(\lambda) (S(\lambda)-\Id)^{d-1}\big)\, d\lambda
$$ 
converges and 
\begin{align*}
&(-1)^{d-1}\frac{1}{2\pi i}\int_0^\infty \Tr\big(S^*(\lambda) S'(\lambda) (S(\lambda)-\Id)^{d-1}\big)\, d\lambda
=(-1)^{d-1}\frac{1}{2\pi i}\int_0^\infty\frac{\frac{d}{d\lambda}\Det_d(S(\lambda))}{\Det_d(S(\lambda))}\\
&=\frac{1}{2\pi i} \int_0^\infty \left( \textup{Tr}\left(S(\lambda)^*S'(\lambda) \right) - p_d(\lambda) \right) \, \d \lambda-\frac1{2\pi i}\sum_{l=1}^{d-1}  \frac{(-1)^\ell}{\ell}
\Tr\big((S(0)-\Id)^\ell\big)- \frac{1}{2\pi i} P_d(0).
\end{align*}
\end{lemma}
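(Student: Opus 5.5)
The plan is to work on a truncated interval $[0,\Lambda]$, convert the integrand into a logarithmic derivative of a regularised determinant, and then let $\Lambda\to\infty$ using the high-energy asymptotics of Proposition \ref{prop:asymptotic_2}. One sign convention in the paper needs to be fixed first, and I flag below what this forces on the middle expression.

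\textbf{Step 1: convergence.} By Proposition \ref{prop:SM-basic}, $S(\lambda)-\Id\in\L^q$ for every $q$, and $\lambda\mapsto S(\lambda)$ is $C^1$ in each $\L^q$. Hence $\Tr(S^*S'(S-\Id)^{d-1})$ is continuous on $(0,\infty)$. The trace-class limit of $S(\lambda)$ at $\lambda=0$ established above gives integrability near $0$.

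\textbf{Step 2: integrand identity and the sign issue.} Lemma \ref{lem:det-p-to-det}, in the form written in its display, reads $\frac{(\Det_p U)'}{\Det_p U}=(-1)^{p-1}\Tr(U^*U'(\Id-U)^{p-1})=\Tr(U^*U'(U-\Id)^{p-1})$. Taken literally with $p=d$, this gives pointwise equality of the two integrands, hence the first equality, once convergence of the $\Det_d$ integral is known.

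The paper's own derivation points the other way. The power series identity \eqref{eq_h_prime} in Lemma \ref{lem_derivative_computation} gives $(\Det_p U)'/\Det_p U=(-1)^{p-1}\Tr(U^*U'(U-\Id)^{p-1})$, consistent with that lemma's statement but not with the display of Lemma \ref{lem:det-p-to-det}. Under this reading, the left side of the lemma equals $\frac1{2\pi i}\int(\Det_d S)'/\Det_d S$ without the extra $(-1)^{d-1}$.

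I will therefore fix one convention and carry it through. I expect that the middle expression, as worded with the $(-1)^{d-1}$ factor, is only literally correct when $(-1)^{d-1}=1$, i.e.\ for $d$ odd. For even $d$, under the convention of \eqref{eq_h_prime}, the factor on the middle term should be removed.

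\textbf{Step 3: the determinant integral.} By Lemma \ref{lem:logdet_p_lodget}, which applies since $S(\lambda)$ is trace-class differentiable on compacts,
\[
\frac{(\Det_d S)'}{\Det_d S}=\Tr(S^*S')+\frac{d}{d\lambda}\sum_{\ell=1}^{d-1}\frac{(-1)^\ell}{\ell}\Tr\big((S-\Id)^\ell\big).
\]
Integrating over $[0,\Lambda]$ and adding and subtracting $p_d=P_d'$, I get
\[
\int_0^\Lambda\big(\Tr(S^*S')-p_d\big)\,\d\lambda+\Big[\sum_{\ell=1}^{d-1}\frac{(-1)^\ell}{\ell}\Tr\big((S(\Lambda)-\Id)^\ell\big)+P_d(\Lambda)\Big]-\sum_{\ell=1}^{d-1}\frac{(-1)^\ell}{\ell}\Tr\big((S(0)-\Id)^\ell\big)-P_d(0).
\]
The lower endpoint terms use the trace-class threshold limit of $S(\lambda)$.

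\textbf{Step 4: the limit $\Lambda\to\infty$ (main obstacle).} The bracket tends to $0$ by Proposition \ref{prop:asymptotic_2}. Its hypotheses on $p$ are $p\ge2$ for $d=2,3$ and $p\ge d$ for $d\ge4$, so $p=d$ is admissible for every $d\ge2$. For $d=1$ there is nothing to prove: the matrix is $2\times2$, $P_1=0$ and the sum is empty.

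Because the bracket converges and the left side converges by Step 1, $\int_0^\Lambda(\Tr(S^*S')-p_d)\,\d\lambda$ has a limit. This yields the improper integral on the right-hand side and, after dividing by $2\pi i$, the stated final expression.

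The delicate points are both in this step. First, one must check that Proposition \ref{prop:asymptotic_2} controls exactly the combination appearing here, with the same $\Log$ branch. This is the concern raised in Proposition \ref{prop:scat-limit}, where eigenvalues accumulate only at $1$. Second, the limit has to be taken so that the $\Log\Det_d$ jumps at $-1$ crossings never enter: I work only with the logarithmic derivative, never with $\Log\Det_d$ itself, so no jumps appear.
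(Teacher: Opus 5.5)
Your Steps 2--4 carry out the same computation as the paper. You apply Lemma \ref{lem:logdet_p_lodget} and the $p=1$ case of Lemma \ref{lem:det-p-to-det}, add and subtract $p_d$, and evaluate the boundary terms by Proposition \ref{prop:asymptotic_2} at infinity and by the trace-class threshold limit at $0$. Your sign diagnosis is also correct. Lemma \ref{lem_derivative_computation} and the last line of the proof of Lemma \ref{lem:det-p-to-det} give $(\Det_pU)'/\Det_pU=(-1)^{p-1}\Tr(U^*U'(U-\Id)^{p-1})$, and this is the identity the paper's proof of this lemma actually uses. So the first and last expressions agree, and the middle one carries a spurious $(-1)^{d-1}$ when $d$ is even.

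The gap is convergence at infinity.
\begin{itemize}
\item Step 1 only gives continuity of the integrand on $(0,\infty)$ and says nothing about $\lambda\to\infty$. Proposition \ref{prop:SM-basic} bounds $\|S(\lambda)-\Id\|_p$ but gives no high-energy bound on $S'(\lambda)$, so no decay of $\Tr(S^*S'(S-\Id)^{d-1})$ is available.
\item Step 4 nevertheless uses ``the left side converges by Step 1'' to conclude that $\int_0^\Lambda(\Tr(S^*S')-p_d)\,\d\lambda$ has a limit. Your identity on $[0,\Lambda]$, together with the vanishing of the bracket, only shows that the two improper integrals converge or diverge together.
\item You have established neither, so neither the convergence assertion of the lemma nor the equality follows.
\item The same omission occurs in your $d=1$ remark, where ``nothing to prove'' skips exactly the convergence claim. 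Near $0$, trace-norm continuity of $S$ alone does not control $S'$ either.
\end{itemize}

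The paper supplies the missing input in the opposite direction. Convergence of $\int_0^\infty(\Tr(S^*S')-p_d)\,\d\lambda$ is imported from Theorem \ref{thm:Levinson}, and the boundary terms then give convergence of the left side.

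If you prefer not to cite Levinson's theorem, you can close the gap yourself. To do so you must work with a continuous logarithm, which your last paragraph deliberately avoids; the logarithmic derivative alone carries no information about the tail.
\begin{itemize}
\item $\Det_dS(\lambda)$ is $C^1$ and non-vanishing on $(0,\infty)$.
\item It tends to $1$ as $\lambda\to\infty$, because $\|S(\lambda)-\Id\|_d\le C\lambda^{-1/(2d)}$ and $\Det_d$ is continuous on $\Id+\L^d$.
\item It tends to $\Det_dS(0)\neq 0$ as $\lambda\to 0$, by the trace-norm threshold limit and the invertibility of $S(0)$.
\item For a continuous branch $\ell$ of $\log\Det_dS$ one has $\int_a^b(\Det_dS)'/\Det_dS\,\d\lambda=\ell(b)-\ell(a)$.
\item $\ell$ has limits at both ends. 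Near each end, $\Det_dS$ stays in a small disc around a non-zero limit, on which a continuous logarithm exists and differs from $\ell$ by a constant in $2\pi i\Z$.
\end{itemize}
This proves directly that the left side converges. Your Step 4 then legitimately yields convergence of the Levinson-type integral as a by-product, independently of Theorem \ref{thm:Levinson}.
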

\begin{proof}
We introduce the notation 
$$
H_d(\lambda) := \sum_{\ell=1}^{d-1} \frac{(-1)^\ell}{\ell}
\Tr\big((S(\lambda)-I)^\ell\big), \lambda\in [0,\infty).
$$

Since for any $\lambda\in[0,\infty)$ we have that $S(\lambda)-\Id$ is a trace-class operator, Lemma \ref{lem:det-p-to-det}  and Lemma \ref{lem:logdet_p_lodget} implies that  
\begin{align*}
(-1)^{d-1} &\Tr\big(S^*(\lambda) S'(\lambda) (S(\lambda)-\Id)^{d-1}\big)
=\frac{\frac{d}{d\lambda}\Det_d(S(\lambda))}{\Det_d(S(\lambda)}
=\frac{\frac{d}{d\lambda}\Det(S(\lambda))}{\Det(S(\lambda)}+\frac{d}{d\lambda}H_d(\lambda).
\end{align*}
Using Lemma \ref{lem:det-p-to-det} once again, we have that 
\begin{align*}
(-1)^{d-1} &\Tr\big(S^*(\lambda) S'(\lambda) (S(\lambda)-\Id)^{d-1}\big)
=\Tr(S^*(\lambda)S'(\lambda))+\frac{d}{d\lambda}H_d(\lambda)\\
&=\Big[ \Tr(S^*(\lambda)S'(\lambda))-p_d(\lambda)\Big]+\Big[\frac{d}{d\lambda}H_d(\lambda)+p_d(\lambda)\Big].
\end{align*}
By Theorem \ref{thm:Levinson} the integral 
$$
\int_0^\infty \left( \textup{Tr}\left(S(\lambda)^*S'(\lambda)-p_d(\lambda) \right)\right) \, d\lambda
$$ 
is well-defined. 
Since $p_d(\lambda)=P_d'(\lambda)$, Proposition \ref{prop:asymptotic_2} implies that 
\begin{align*}
\int_0^\infty \Big[\frac{d}{d\lambda}H_d(\lambda)+p_d(\lambda)\Big] \, d\lambda=\lim_{\lambda\to\infty} (H_d(\lambda) +P_d(\lambda))-\lim_{\lambda\to 0+}(H_d(\lambda) +P_d(\lambda))\\
&=-H_d(0)-P_d(0). 
\end{align*}

Thus, 
 \begin{align*}
(-1)^{d-1}\frac{1}{2\pi i}&\int_0^\infty \Tr\big(S^*(\lambda) S'(\lambda) (S(\lambda)-\Id)^{d-1}\big)\, d\lambda\\
&=\frac1{2\pi i} \int_0^\infty \left( \textup{Tr}\left(S(\lambda)^*S'(\lambda)-p_d(\lambda) \right)\right) \, d\lambda-\frac{1}{2\pi i} H_d(0)-\frac{1}{2\pi i} P_d(0),
\end{align*}
proving that the integral converges.
\end{proof}

We now prove the main result of this section, which gives reformulation of Theorem \ref{thm:Levinson} as a spectral flow formula. For the cases of dimension 1 and 3, we use the convention of Remark \ref{rmk:path-convention} to fix the path from ${\rm Id}$ to $S(0)$ when it is needed.

\begin{thm}\label{thm:sf_Levinson}
Suppose that $q_1, q_2 \in C_c^\infty(\R^d)$ and $V = q_1q_2$. Then
\[
\textup{sf}(S(\bullet))=\begin{cases}
\frac1{2\pi i} \int_0^\infty \left( \textup{Tr}\left(S(\lambda)^*S'(\lambda)\right)\right) \, d\lambda+\frac{1}{2}, & d=1 \mbox{ and there are no resonances},\\
\frac1{2\pi i} \int_0^\infty \left( \textup{Tr}\left(S(\lambda)^*S'(\lambda)-p_3(\lambda) \right)\right) \, d\lambda+\frac12\Tr(P_s),& d=3, \\
\frac1{2\pi i} \int_0^\infty \left( \textup{Tr}\left(S(\lambda)^*S'(\lambda)-p_d(\lambda) \right)\right) \, d\lambda-\frac{1}{2\pi i} P_d(0),&\text{otherwise}.\end{cases}
\]
\end{thm}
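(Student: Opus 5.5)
We split according to whether $S(0)=\Id$ or not. When $S(0)=\Id$ (that is, $d=2$ and $d\geq 4$, and also $d=3$ with no resonance, where $P_s=0$), the path $S(\bullet)$ is a loop based at $\Id$. Indeed, by Proposition \ref{prop:SM-basic} it tends to $\Id$ as $\lambda\to\infty$ in $\L^q$ for $q\geq d$, and by the trace-class threshold proposition it tends to $\Id$ as $\lambda\to0$.

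We therefore apply Corollary \ref{cor:cayley-spec-flow-formula} with $p=d$ and $n=d-1\geq p-1$. Integrability of the integrand is supplied by Lemma \ref{lem_integral_comp}. This gives
\[
\textup{sf}(S(\bullet))=(-1)^{d-1}\frac{1}{2\pi i}\int_0^\infty \Tr\big(S^*(\lambda) S'(\lambda) (S(\lambda)-\Id)^{d-1}\big)\,\d\lambda .
\]
Lemma \ref{lem_integral_comp} rewrites the right-hand side as the regularised integral, minus $\frac{1}{2\pi i}H_d(0)$ and $\frac1{2\pi i}P_d(0)$. Since $S(0)=\Id$, the term $H_d(0)=\sum_\ell \frac{(-1)^\ell}{\ell}\Tr((S(0)-\Id)^\ell)$ vanishes, and we obtain the ``otherwise'' case.

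When $S(0)\neq \Id$, the path is not closed. Following Definition \ref{defn:spec-flow-path}, we take $\textup{sf}(S(\bullet))=\textup{sf}_{Y,\cdot}(S(\bullet))$, where $V_t=e^{tY}$ joins $\Id$ to $S(0)$ and no closing path is needed at $\infty$. The logarithm $Y$ is fixed by the convention of Remark \ref{rmk:path-convention}.

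For $d=3$ we have $S(0)=\Id-2P_s$, so $Y=i\pi P_s$ and $V_t=\Id-P_s+e^{i\pi t}P_s$. Concatenating $V_\bullet$ with $S(\bullet)$ gives a loop, and Corollary \ref{cor:cayley-spec-flow-formula} (after reparametrisation) together with Proposition \ref{prop:path-sf-formula} gives
\[
\textup{sf}=\frac{(-1)^{n}}{2\pi i}\int_0^1\Tr\big(V_t^*\dot V_t(V_t-\Id)^{n}\big)\,\d t+\frac{(-1)^{n}}{2\pi i}\int_0^\infty\Tr\big(S^*S'(S-\Id)^{n}\big)\,\d\lambda .
\]
We compute the $V$-integral using Theorem \ref{thm:spec-flow-formula-det} and Lemma \ref{lem:logdet_p_lodget}: it equals $\frac1{2\pi i}[\log\Det V_1-\log\Det V_0]+\frac{1}{2\pi i}[H(V_1)-H(V_0)]$. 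The first bracket is $i\pi\Tr(P_s)$, because $\Det V_t=e^{i\pi t\Tr P_s}$. The second bracket is $\frac{1}{2\pi i}H_d(S(0))$, since $H(V_0)=H(\Id)=0$. This term exactly cancels the $-\frac{1}{2\pi i}H_d(0)$ coming from Lemma \ref{lem_integral_comp}. What remains is $\frac12\Tr(P_s)$ plus the regularised integral minus $\frac1{2\pi i}P_3(0)$, and $P_3(0)=0$ because $P_3(\lambda)\propto\lambda^{1/2}$. This is the claimed $d=3$ formula.

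For $d=1$ with no resonance, $S(\lambda)$ is a $2\times2$ matrix, so all traces converge absolutely and $P_1=p_1=0$. The endpoint is $S(0)=\begin{pmatrix}0&-1\\-1&0\end{pmatrix}$, which has eigenvalues $1$ and $-1$. The convention gives $Y=i\pi Q$, where $Q$ is the rank-one eigenprojection for $-1$. Then $\Det V_t=e^{i\pi t}$, so the $V$-contribution is $\frac12$. The trace-class formula \eqref{eq:sf_trace_class_intro} applies directly on the concatenated loop, after reparametrising $[0,\infty)$ as in the Corollary. It gives the stated formula, with the correction terms $H$ cancelling exactly as in the $d=3$ case. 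For $d=1$ with a resonance, the given $S(0)$ has determinant $4c_+^2c_-^2-(c_+^2-c_-^2)^2\cdot(-1)=1$ and does not have $-1$ as an eigenvalue in a way requiring a cut choice. We then treat the principal logarithm as in the generic case, and since $\log\Det S(0)$ contributes zero the ``otherwise'' formula results.

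The main obstacle is the bookkeeping of the endpoint terms in the non-closed cases. One must check that the $H_d(0)$ term from Lemma \ref{lem_integral_comp} cancels exactly against the connecting-path contribution $H(V_1)-H(V_0)$. One must also justify using Lemma \ref{lem:logdet_p_lodget} along $V_\bullet$ with the continuous branch of $\log\Det$, rather than the discontinuous principal $\Log$. The convention of Remark \ref{rmk:path-convention}, which takes the logarithm of $-1$ to be $+i\pi$, fixes the sign of the $\frac12$.
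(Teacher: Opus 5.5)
Your proposal follows the paper's proof essentially step by step. The split according to whether $S(0)=\Id$ matches, as does the use of Lemma \ref{lem_integral_comp} in the loop case with the $H_d(0)$ term vanishing. So do the concatenation with $e^{i\pi tP_s}$ for $d=3$, the cancellation of $H_3(0)$ against the endpoint term of the connecting path together with $P_3(0)=0$, and the concatenation with $e^{i\pi tQ}$ for $d=1$ without resonance. Citing Corollary \ref{cor:cayley-spec-flow-formula} rather than Theorem \ref{thm:spec-flow-formula-det} in the loop case is, if anything, more careful, since the path is indexed by $[0,\infty)$.

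The one step that fails is the resonant case in $d=1$. You assert that $S(0)$ does not have $-1$ as an eigenvalue requiring a cut choice. But $S(0)$ is a real rotation matrix with eigenvalues $2c_+c_-\pm i(c_+^2-c_-^2)$. For $c_+=-c_-=\pm 1/\sqrt2$ this gives $S(0)=-\Id$. Theorem \ref{thm:scat-mat-zero} does not exclude this, and it is exactly what happens for an odd zero-energy resonance of an even potential.

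In that case the convention of Remark \ref{rmk:path-convention} forces $Y=i\pi\Id$. The connecting path then contributes $\frac{1}{2\pi i}\Tr Y=1$, so your principal-logarithm recipe yields the ``otherwise'' formula plus $1$, not the stated one.

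The underlying slip is that the endpoint contribution is $\frac{1}{2\pi i}\Tr Y$. Knowing $\Det S(0)=1$ determines this only modulo $\Z$, so observing that $\log\Det S(0)=0$ does not show the contribution vanishes.

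The paper obtains the stated formula differently. It connects $\Id$ to $S(0)$ by the rotation path $e^{t\theta J}$, $J=\bigl(\begin{smallmatrix}0&1\\-1&0\end{smallmatrix}\bigr)$, whose generator is traceless for every $\theta$, including $\theta=\pi$. Note that at $\theta=\pi$ this departs from the stated cut-locus convention. To prove the theorem as stated, you should either use that path explicitly or treat the case $S(0)=-\Id$ separately.

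A minor point: in $d=1$, using the trace-class formula, there are no $H$ terms to cancel, since the sum defining $H_1$ is empty.
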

\begin{proof}
Assume firstly that $S(0)=\Id$. Then $S(\bullet)$ is a $C^1$-loop of unitaries in  $\mathcal{U}_d$. Furthermore, by Lemma \ref{lem_integral_comp} we have that the integral $(-1)^{d-1}\frac{1}{2\pi i}\int_0^\infty \Tr\big(S^*(\lambda) S'(\lambda) (S(\lambda)-\Id)^{d-1}\big)\, d\lambda$ converges. Thus, by Theorem \ref{thm:spec-flow-formula-det} and Lemma \ref{lem_integral_comp} we have that 
\begin{align*}
\spf&(S(\bullet))=(-1)^{d-1}\frac{1}{2\pi i}\int_0^\infty \Tr\big(S^*(\lambda) S'(\lambda) (S(\lambda)-\Id)^{d-1}\big)\, d\lambda\\
&=\frac1{2\pi i} \int_0^\infty \left( \textup{Tr}\left(S(\lambda)^*S'(\lambda)-p_d(\lambda) \right)\right) \, d\lambda-\frac{1}{2\pi i} \sum_{l=1}^{d-1}  \frac{(-1)^\ell}{\ell}
\Tr\big((S(0)-\Id)^\ell\big)-\frac{1}{2\pi i} P_d(0).
\end{align*}
Since $S(0)=\Id$, we have that the second term on the right-hand side above is zero, which proves the assertion in the case $S(0)=\Id$. 

Assume now $S(0)\neq \Id$. By Theorem \ref{thm:scat-mat-zero} this can happen only if $d=1$ or $d=3$.

Assume first that $d=1$. In this case $S(\lambda)$ is a $C^1$-path in $\mathcal{U}_1$. Consider firstly the case, when there are no resonances. In this case $S(0)=\begin{pmatrix}
0&-1\\-1&0
\end{pmatrix}$ and $S(0)=e^{i\pi Q}$ with $Q=\frac12\begin{pmatrix}
1&1\\1&1
\end{pmatrix}.$ Concatenating with the path $U_t=e^{i\pi tQ}$, we have that 
\begin{align*}
\spf(S(\bullet))&=\frac1{2\pi i}\int_0^\infty \Tr(S^*(\lambda)S'(\lambda))\, d\lambda+\frac1{2\pi i} \int_0^1 \Tr (U_t^*\dot{U}_t)\, dt\\
&=\frac1{2\pi i}\int_0^\infty \tr(S^*(\lambda)S'(\lambda))\, d\lambda+\frac1{2\pi i} \int_0^1 \Tr (i\pi Q) dt= \frac1{2\pi i}\int_0^\infty \tr(S^*(\lambda)S'(\lambda))\, d\lambda+\frac12,
\end{align*}
as required. In the case when there are resonances, we have that 
$S(0)=\begin{pmatrix} 2 c_+ c_- & c_+^2-c_-^2 \\ c_-^2 - c_+^2 & 2 c_+ c_- \end{pmatrix}$ for some $c_+, c_- \in \R \setminus\{0\}$ with $c_+^2+c_-^2 = 1$. Introduce the path $W_t=e^{i\pi tQ_r}$, where 
$$Q_r=\theta \begin{pmatrix} 0&1\\-1&0\end{pmatrix}, \quad \cos\theta=2c_+c_-,\quad   \sin\theta=c_+^2-c_-^2,$$
so that $e^{i\pi Q_r}=S(0)$. Then 
 \begin{align*}
\spf(S(\bullet))&=\frac1{2\pi i}\int_0^\infty \tr(S^*(\lambda)S'(\lambda))\, d\lambda+\frac1{2\pi i} \int_0^1 \tr (W_t^*\dot W_t)\, dt\\
&=\frac1{2\pi i}\int_0^\infty \tr(S^*(\lambda)S'(\lambda))\, d\lambda+\frac1{2\pi i} \int_0^1 \Tr (i\pi Q_r) dt= \frac1{2\pi i}\int_0^\infty \tr(S^*(\lambda)S'(\lambda))\, d\lambda,
\end{align*}
which completes the proof for $d=1$. 

Assume now that $d=3$. In this case, $S(0)=\Id-2P_s$. Introduce the path $V_t=e^{i\pi t P_s}, t\in[0,1]$. Then the concatenated path $V_\bullet\star S(\bullet)$ is a $C^1$-loop in $\mathcal{U_1}$. Therefore, by Proposition \ref{prop:path-sf-formula} we have that 
$$
\spf(S(\bullet))=\frac1{2\pi i}\int_0^\infty \frac{\frac{d}{d\lambda}\Det(S_3(\lambda)}{\Det_3(S(\lambda))} \, d\lambda+\frac1{2\pi i}\int_0^1 \frac{\frac{d}{dt}\Det_3(V_t)}{\Det_3 V_t}\, dt.
$$
Referring once again to Lemma \ref{lem:det-p-to-det} we have that the first integral can be written as 
\begin{align*}
\frac1{2\pi i}&\int_0^\infty \frac{\frac{d}{d\lambda}\Det(S_3(\lambda))}{\Det_3(S(\lambda))} \, d\lambda\\
&=\frac1{2\pi i} \int_0^\infty \left( \textup{Tr}\left(S(\lambda)^*S'(\lambda)-p_3(\lambda) \right)\right) \, d\lambda-\frac{1}{2\pi i} \sum_{l=1}^{2}  \frac{(-1)^\ell}{\ell}
\Tr\big((S(0)-\Id)^\ell\big)-\frac{1}{2\pi i} P_3(0).
\end{align*}

To compute $\frac1{2\pi i}\int_0^1 \frac{\frac{d}{dt}\Det_3(V_t)}{\Det_3 V_t}\, dt$, we refer to Lemma \ref{lem:logdet_p_lodget} to obtain 
\begin{align*}
\frac1{2\pi i}\int_0^1 \frac{\frac{d}{dt}\Det_3(V_t)}{\Det_3 V_t}\, dt=\frac1{2\pi i} \int_0^1 \Big(\frac{\frac{d}{dt}\Det(V_t)}{\Det V_t}+ \frac{d}{dt} \sum_{l=1}^2 \frac{(-1)^l}{l} \Tr((V_t-\Id)^l)\Big)\, dt.
\end{align*}
Since \(P_s\) is a rank one projection, we have
\[
\Det(V_t)=e^{i\pi t\Tr(P_s)}=e^{i\pi t}.
\]
Therefore, 
$$
\frac{\frac{d}{dt}\Det(V_t)}{\Det V_t}=i\pi 
\qquad
\mbox{so that} 
 \qquad
 \frac1{2\pi i} \int_0^1 \frac{\frac{d}{dt}\Det(V_t)}{\Det V_t}\, dt= \frac12.
 $$

Thus, we have that 
\begin{align*}
&\spf(S(\bullet))\\
&=\Big[\frac1{2\pi i} \int_0^\infty \left( \textup{Tr}\left(S(\lambda)^*S'(\lambda)-p_3(\lambda) \right)\right) \, d\lambda-\frac{1}{2\pi i} \sum_{l=1}^{2}  \frac{(-1)^\ell}{\ell}
\Tr\big((S(0)-\Id)^\ell\big)-\frac{1}{2\pi i} P_3(0)\Big]\\
&\quad +\Big[\frac{1}{2}\Tr(P_s)+  \sum_{l=1}^2 \frac{(-1)^l}{l} \Tr((V_1-\Id)^l)]\\
&=\frac1{2\pi i} \int_0^\infty \left( \textup{Tr}\left(S(\lambda)^*S'(\lambda)-p_3(\lambda) \right)\right) \, d\lambda+\frac12,
\end{align*}
where in the last equality we used the fact that 
$V_1=S(0)$ and that $P_3(0)=0$. Noting that $P_s$ is rank one projection, we conclude the proof.  
%
\end{proof}

\begin{thm}
\label{thm:sf_Levinson2}
Suppose that $q_1, q_2 \in C_c^\infty(\R^d)$ and $V = q_1q_2$.  For $\lambda \geq 0$ let $S(\lambda)$ be the scattering matrix at energy $\lambda$. Then employing the convention of Remark \ref{rmk:path-convention} in dimensions 1 and 3, we have
\begin{align}
\spf(S(\bullet))=\begin{cases} -N-N_{res}, &d=2,4,\\
-N, & \text{otherwise,}\end{cases}
\end{align}
where $N$ is the number of eigenvalues of $H = H_0+V$ and $N_{res}$ is as defined in Theorem \ref{thm:Levinson}.
\end{thm}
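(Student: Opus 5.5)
The plan is to combine Theorem \ref{thm:sf_Levinson} with Levinson's theorem, Theorem \ref{thm:Levinson}, case by case. Both results express their quantities through the same regularised integral
\[
I_d:=\frac{1}{2\pi i}\int_0^\infty\big(\Tr(S(\lambda)^*S'(\lambda))-p_d(\lambda)\big)\,\d\lambda-\frac{1}{2\pi i}P_d(0),
\]
with the convention $p_1=P_1=0$. Theorem \ref{thm:Levinson} rearranges to $I_d=-N-N_{res}$. Theorem \ref{thm:sf_Levinson} writes $\spf(S(\bullet))$ as $I_d$ plus an explicit endpoint correction coming from the geodesic joining $\Id$ to $S(0)$, chosen by the convention of Remark \ref{rmk:path-convention}. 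The proof is therefore bookkeeping: in each dimension, subtract the endpoint correction from $N_{res}$.

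\textbf{Cases $d=2$ and $d\geq 4$.} Here $S(0)=\Id$ by Theorem \ref{thm:scat-mat-zero}, so the third line of Theorem \ref{thm:sf_Levinson} applies and $\spf(S(\bullet))=I_d=-N-N_{res}$. By Theorem \ref{thm:Levinson}, $N_{res}=0$ when $d\geq5$, and $N_{res}$ counts the $p$-resonances when $d=2$ and the $s$-resonances when $d=4$. This gives the claimed formula in these dimensions.

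\textbf{Case $d=3$.} Theorem \ref{thm:sf_Levinson} gives $\spf=I_3+\tfrac12\Tr(P_s)$. Here $P_3(0)=0$ because $P_3(\lambda)$ is proportional to $\lambda^{1/2}$. If there is no resonance, then $P_s=0$ and $N_{res}=0$, so $\spf=-N$. If there is a resonance, then $P_s$ is the rank-one projection onto constants and $N_{res}=\tfrac12$. This gives $\spf=-N-\tfrac12+\tfrac12=-N$.

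\textbf{Case $d=1$.} Without a resonance, Theorem \ref{thm:sf_Levinson} gives $\spf=I_1+\tfrac12$ while $N_{res}=\tfrac12$, so $\spf=-N$. With a resonance, $\spf=I_1$ and $N_{res}=0$, so again $\spf=-N$.

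The main obstacle is ensuring that Theorem \ref{thm:Levinson} can be used in the resonant cases. As stated, its hypothesis says ``no zero energy resonant states'', yet its list of $N_{res}$ explicitly covers resonances. I will invoke the full version, \cite[Theorem 5.10]{alexander24}, which is valid with or without resonances and has the same $N_{res}$. A second point to check is that the endpoint corrections in Theorem \ref{thm:sf_Levinson} for $d=1,3$ were computed with the sign convention $-1=e^{i\pi}$ fixed in Remark \ref{rmk:path-convention}, so that they match the half-integer values of $N_{res}$. I will also check that the hypothesis $V=q_1q_2$ with $q_1,q_2\in C_c^\infty(\R^d)$ implies the assumption $V\in C_c^\infty(\R^d)$ used in Theorem \ref{thm:Levinson}.
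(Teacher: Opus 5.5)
Your reduction is the one the paper intends: it gives no separate proof, and Theorem \ref{thm:sf_Levinson2} is meant to be Theorem \ref{thm:sf_Levinson} combined with Theorem \ref{thm:Levinson}. For $d\geq 2$ your case-by-case bookkeeping is complete and correct. The gap is in $d=1$, exactly at the convention check you postponed. Your non-resonant $d=1$ step needs $I_1=-N-\frac12$, that is, the printed value $N_{res}=+\frac12$. You justify this by asserting that \cite[Theorem 5.10]{alexander24} has ``the same $N_{res}$''. But the one-dimensional half-integer has the opposite sign. Take $V=q_1q_2\geq 0$ small and nonzero. Then $N=0$, there is no resonance, and $\det S(\lambda)=T(k)/\overline{T(k)}$ with $k=\sqrt\lambda$. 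Here $\arg T$ increases from $-\pi/2$ at threshold to $0$ at infinity. Indeed, the Born term of \eqref{eq:stat-scat-mat} gives $T\approx 1-\frac{i}{2k}\int V$, and for the model $V=g\delta$ with $g>0$ one has $T(k)=2ik/(2ik-g)$. Hence $I_1=\frac{1}{2\pi i}\int_0^\infty\Tr(S^*S')\,\d\lambda=+\frac12$, not $-\frac12$. This is the classical one-dimensional Levinson theorem $N=\frac1\pi(\delta_e(0)+\delta_o(0))+\frac12$, i.e.\ $-N=I_1-\frac12$. In $d=3$, by contrast, the resonant correction really is $+\frac12$, so your $d=3$ case is fine.

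Feeding the correct relation into Theorem \ref{thm:sf_Levinson} gives $\spf(S(\bullet))=I_1+\frac12=1-N$ in the non-resonant $d=1$ case, not $-N$. You can see this directly in the example. The closing geodesic $e^{i\pi tQ}$ prescribed by Remark \ref{rmk:path-convention} carries the eigenvalue on $(1,1)/\sqrt2$ from $1$ to $-1$ through the upper half-plane. Then $S(\lambda)$ carries it from $-1$ back to $1$ through the lower half-plane; for $g\delta$ this eigenvalue is $(2ik+g)/(2ik-g)$. So the loop crosses $-1$ once anticlockwise, and $\spf=1\neq 0=-N$. The resonant $d=1$ case has a similar problem when $S(0)=-\Id$, for instance an even potential with an odd zero-energy resonance ($c_+=-c_-$). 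The proof of Theorem \ref{thm:sf_Levinson} then uses the trace-free rotation geodesic from $\Id$ to $-\Id$. The convention of Remark \ref{rmk:path-convention} instead takes $Y=i\pi\Id$, which adds $\frac{1}{2\pi i}\Tr(i\pi\Id)=1$ and again yields $1-N$. So in $d=1$ your bookkeeping, and the paper's, rests on a wrong sign, and the stated $d=1$ line does not hold with this convention. Only the cases $d\geq 2$ go through as you wrote them.
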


\section{Spectral flow for paths of non-densely defined operators}
\label{sec:fctns}

In this section we show that translating from unitaries in $\mathcal{U}_p$ using the Cayley transform allows us to meaningfully discuss spectral flow of paths of non-densely defined operators, or equivalently, paths of self-adjoint operators on varying Hilbert spaces. We then use these tools to describe the unbounded operators on fields of Hilbert spaces defining classes in $KK^1(\C,C_0(\R))$.

Kasparov's stabilisation theorem shows that Hilbert modules of the form $X_{C_0(\R)}$ are (sections of) continuous fields of Hilbert spaces $\{\H_t\}$ over $\R$. Using the stabilisation theorem to make an identification of $X$ with such a continuous field, we provide a characterisation of the families of operators on $\{\H_t\}$ which define Kasparov modules. The same comments apply to any locally compact Hausdorff space in place of $\R$.

\subsection{The Cayley transform of $\mathcal{U}_p$}
\label{sub:the-problem}

It is well-known that the Cayley transform allows to translate between unitaries and (unbounded) self-adjoint operators. 
A careful comparison of spectral flow on unbounded self-adjoint Fredholm operators in different topologies appears in \cite{BLP}, in part using the spectral flow for unitaries and the Cayley transform. Importantly, the class of unitaries considered in \cite{BLP} includes only unitaries $U$ with $U-\Id$ injective. Under this assumption the Cayley transform translates to self-adjoint operators which are densely defined on a fixed Hilbert space $H$.  In \cite{BLP}, the Cayley transform is shown to be continuous between the gap topology and norm topology, while the bounded transform $x\to x(1+x^2)^{-1/2}$ is continuous, by definition, between the Riesz topology and norm topology. 

In this subsection, we aim to show what class of unbounded operators is associated via Cayley transform with the unitaries in $\mathcal{U}_p$ without the assumption that $U-\Id$ is injective. This will allow to define spectral flow of paths of non-densely defined operators, or equivalently, paths of self-adjoint operators on varying Hilbert spaces.


One obvious application encompassed by our results, which we do not explore here, is to spectral flow for boundary value problems, as discussed in \cite{BLP,Wahl07}. In these cases the Hilbert space is fixed, but the domains of the operators are changing in a gap continuous manner.

We start by introducing the notation for the Cayley transform and its inverse. As before, we assume that $\H$ is a separable Hilbert space.

\begin{defn}
\label{defn:cayley-bounded}
Given a self-adjoint unbounded operator $D:\Dom(D)\subset \H\to \H$, the inverse Cayley transform of $D$ is the unitary $C^{-1}(D):=(D+i)(D-i)^{-1}$.

Given a unitary $U\in B(\H)$, the Cayley transform of $U$ is the unbounded self-adjoint operator $C(U):(U-\Id)\H\to \overline{(U-\Id)\H}$ given by $C(U)=i(U+\Id)(U-\Id)^{-1}$.
\end{defn}

\begin{rmk}
We have departed from some standard conventions regarding which is the Cayley transform and which its inverse. 
\end{rmk}

The Cayley transform is ubiquitous. For us, the relevant aspects are the applications of the Cayley transform in studying properties of unbounded self-adjoint operators (see e.g. \cite{Lance}), leveraging the well-understood spectral theory of unitaries.

Next we introduce the class of unbounded operators we shall consider in this section.

\begin{defn}Let $1\leq p<\infty$.
Given a closed subspace $V\subset \H$, we define 
\[
\mathcal{F}_p^{sa}(V)=\{T:\Dom(T)\subset V\to V\ \mbox{self-adjoint}:\,(T-i)^{-1}\in\L^p(V)\}.
\]
Define the set 
\[
\mathcal{F}_p(\H):=\bigcup_{V\subset \H}\mathcal{F}_p^{sa}(V),
\]
where the union is taken over all closed subspaces $V\subset \H$. We define $\mathcal{F}_\infty(\H)$ similarly by assuming that $(T-i)^{-1}$ is a compact operator on the  closed subspace $V$. 

\end{defn}

We can immediately show that the Cayley transform is a bijection between $\mathcal{U}_p$ and $\mathcal{F}_p(\H)$, $1\leq p \leq \infty$. 

\begin{prop}
\label{prop:cayley-biject}
Assume that $1\leq p\leq\infty$. Define the maps 
\begin{align}
C:\mathcal{U}_p\to \mathcal{F}_p(\H),&\quad C(U)=i(U+\Id)(U-\Id)^{-1}\mbox{ densely defined on }\H_U=\overline{(U-\Id)\H},\nonumber\\
C^{-1}:\mathcal{F}_p^{sa}(V)\to \mathcal{U}_p,&\quad C^{-1}(T)=\begin{cases}(T+ i)(T- i)^{-1}&\mbox{ on }V\\ {\rm Id}&\mbox{ on }V^\perp\end{cases}.
\label{eq:inverse-cayley}
\end{align} Then, the maps $C$ and $C^{-1}$ are mutually inverse bijections between $\mathcal{U}_p(\H)$ and $\mathcal{F}_p(\H)$. In particular, $\mathcal{F}_p(\H)$ is a Banach manifold with respect to the metric defined as 
$$d_{ \mathcal{F}_p(\H)}(T,S)=\|(T-i)^{-1}P_V-(S-i)^{-1}P_W\|_{\L_p(\H)}, \quad T\in  \mathcal{F}_p^{sa}(V),\  S\in  \mathcal{F}_p^{sa}(W),$$
where $P_V$ is the orthogonal projection from $\H$ onto the closed subspace $V\subset \H$.
\end{prop}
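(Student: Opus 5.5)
The plan is to verify the two compositions directly on the level of subspaces, using the orthogonal decomposition $\H=\ker(U-\Id)\oplus\overline{(U-\Id)\H}$ valid for any normal (in particular unitary) $U$. Then I show that the Schatten conditions correspond, and finally transport the Banach manifold structure of $\mathcal{U}_p$ to $\mathcal{F}_p(\H)$ through $C$.

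\emph{Step 1: $C$ maps $\mathcal{U}_p$ into $\mathcal{F}_p(\H)$.} Let $U\in\mathcal{U}_p$ and set $V=\H_U=\overline{(U-\Id)\H}=\ker(U-\Id)^\perp$. Since $U$ is normal, $V$ reduces $U$, and $U_V:=U|_V$ is a unitary on $V$ with $U_V-\Id_V$ injective and of dense range in $V$. The spectral theorem applied to $U_V$ then shows that $C(U)=i(U_V+\Id)(U_V-\Id)^{-1}$, defined on $\Dom=(U-\Id)\H$, is self-adjoint on $V$. Indeed, it is the function $f(z)=i(z+1)/(z-1)$ of $U_V$, and $f$ is real-valued on $\mathbb{T}\setminus\{1\}$. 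Next I compute the resolvent. Writing $T=C(U)$, one gets
\[
T-i=i\bigl((U_V+\Id)-(U_V-\Id)\bigr)(U_V-\Id)^{-1}=2i(U_V-\Id)^{-1},
\]
so $(T-i)^{-1}=\tfrac{1}{2i}(U_V-\Id)=\tfrac{1}{2i}(U-\Id)|_V$. This lies in $\L^p(V)$ because $U-\Id\in\L^p(\H)$ and $V$ reduces $U$. For $p=\infty$ the same argument gives compactness.

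\emph{Step 2: $C^{-1}$ maps $\mathcal{F}_p(\H)$ into $\mathcal{U}_p$, and the two maps are mutually inverse.} Let $T\in\mathcal{F}_p^{sa}(V)$. The operator $(T+i)(T-i)^{-1}=\Id_V+2i(T-i)^{-1}$ is unitary on $V$ by the usual Cayley theory. Extending it by $\Id$ on $V^\perp$ gives
\[
C^{-1}(T)-\Id=2i(T-i)^{-1}P_V\in\L^p(\H).
\]
Moreover, $C^{-1}(T)-\Id$ is injective on $V$, because $(T-i)^{-1}$ is injective, and it vanishes on $V^\perp$. Hence $\overline{(C^{-1}(T)-\Id)\H}=V$, using that $\mathrm{Ran}\,(T-i)^{-1}=\Dom(T)$ is dense in $V$. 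With this in hand, Step 1 gives $C(C^{-1}(T))=T$ on $\Dom(T)$, since both operators have resolvent $(T-i)^{-1}$ at $i$. Conversely, $C^{-1}(C(U))$ equals $\Id_V+(U-\Id)|_V=U|_V$ on $V$ and equals $\Id=U$ on $\ker(U-\Id)$. Both identities are algebraic once the resolvent formula from Step 1 is available.

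\emph{Step 3: the metric and the manifold structure.} The two steps combine into the key identity
\[
C^{-1}(T)-C^{-1}(S)=2i\bigl((T-i)^{-1}P_V-(S-i)^{-1}P_W\bigr).
\]
It shows that $d_{\mathcal{F}_p(\H)}(T,S)=\tfrac12\|C^{-1}(T)-C^{-1}(S)\|_p$. Thus $C$ is an isometry, up to the factor $\tfrac12$, from $\mathcal{U}_p$ with its $\|\cdot\|_p$-metric onto $\mathcal{F}_p(\H)$ with $d_{\mathcal{F}_p(\H)}$. This makes $d_{\mathcal{F}_p(\H)}$ a genuine metric, so the charts of the Banach–Lie group $\mathcal{U}_p$ transport through $C$ to give the manifold structure. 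The main point requiring care is Step 1's domain and self-adjointness issue when $U-\Id$ fails to be injective. I handle it precisely by restricting to the reducing subspace $V$, where $U_V-\Id$ is injective and the classical Cayley correspondence applies verbatim.
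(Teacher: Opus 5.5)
Your proof is correct and takes the same route as the paper: check that $C$ and $C^{-1}$ land in the right spaces, check that the two compositions are the identity thanks to the choice of $\H_U$ and the extension by $\Id$ on $V^\perp$, then transport the Banach--Lie structure of $\mathcal{U}_p$. Where the paper defers to the literature or calls these facts clear, you supply the verifications: the reducing decomposition $\H=\ker(U-\Id)\oplus\H_U$, the identity $(C(U)-i)^{-1}=\tfrac{1}{2i}(U-\Id)|_{\H_U}$, and $d_{\mathcal{F}_p(\H)}(T,S)=\tfrac12\|C^{-1}(T)-C^{-1}(S)\|_p$, which makes the final Banach manifold assertion precise.
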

\begin{proof}
Let $U\in\mathcal{U}_p(\H)$. Then the operator, defined as  
\[
C(U)=i(U+{\rm Id})(U-{\rm Id})^{-1}:(U-{\rm Id})\H\to \H_U
\]
is densely-defined and self-adjoint on $\H_U=\overline{(U-{\rm Id})\H}$, \cite{BLP,BKR}, and therefore, $C$ maps $\mathcal{U}_p$ into $\mathcal{F}_p(\H)$. Clearly, for any $T\in \mathcal{F}_p(\H)$, the operator $C^{-1}(T)$ defined in \eqref{eq:inverse-cayley} is unitary and belongs to $\mathcal{U}_p$.  The choice of domain $\H_U$, as well as identity extension to $V^\perp$, ensures that $C$ and $C^{-1}$ are mutually inverse bijections. The assertion that $ \mathcal{F}_p(\H)$ is a Banach manifold then follows immediately. 
\end{proof}

%
%
%
For $\mathcal{U}_p(\H)$ we have a natural basepoint given by the identity. For $\mathcal{F}_p(\H)$ this point is the zero operator on the zero subspace.
\begin{lemma}
\label{lem:Id-zero}
For $1\leq p\leq\infty$, the Cayley transform maps ${\rm Id}_\H\in\mathcal{U}_p(\H)$ to the zero operator on the zero subspace.
\end{lemma}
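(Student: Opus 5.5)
We evaluate the Cayley transform of Proposition \ref{prop:cayley-biject} directly at $U=\Id_\H$. By definition, $C(U)=i(U+\Id)(U-\Id)^{-1}$ has domain $(U-\Id)\H$ and acts on the Hilbert space $\H_U=\overline{(U-\Id)\H}$. For $U=\Id$ we have $U-\Id=0$, so the range is $(U-\Id)\H=\{0\}$ and its closure is $\H_{\Id}=\{0\}$. The domain and the ambient subspace are therefore both the zero subspace.

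The only operator on the zero space is the zero operator, so $C(\Id)$ must be the zero operator on $\{0\}$. For completeness we check that this object lies in $\mathcal{F}_p(\H)$. It is vacuously self-adjoint. Moreover, $(0-i)^{-1}$ on $V=\{0\}$ is the zero operator, which belongs to $\L^p(\{0\})$ for every $1\leq p\leq\infty$. Hence it lies in $\mathcal{F}_p^{sa}(\{0\})\subset\mathcal{F}_p(\H)$.

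As a consistency check, we apply \eqref{eq:inverse-cayley} to $T=0$ on $V=\{0\}$. The formula gives $C^{-1}(T)$ equal to the identity on $V^\perp=\H$, and trivially equal to $(T+i)(T-i)^{-1}$ on $V=\{0\}$. So $C^{-1}(T)=\Id_\H$, consistent with the bijection of Proposition \ref{prop:cayley-biject}.

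There is no real obstacle. The only point requiring care is interpreting the expression $(U-\Id)^{-1}$. For general $U\in\mathcal{U}_p$ it is the inverse of $U-\Id$ viewed as a map from $\H_U$ onto its range; in this case that is a map from $\{0\}$ to $\{0\}$. It should not be read as an inverse on all of $\H$, which does not exist here.
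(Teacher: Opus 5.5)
Your proposal is correct and follows the paper's argument, which is the one-line observation that the domain $(\Id_\H-\Id_\H)\H=\{0\}$ forces $C(\Id_\H)$ to be the zero operator on the zero subspace. Your additional checks are extra detail the paper omits, and they are accurate: membership in $\mathcal{F}_p^{sa}(\{0\})$, consistency with $C^{-1}$ via \eqref{eq:inverse-cayley}, and reading $(U-\Id)^{-1}$ as the inverse of $(U-\Id)|_{\H_U}$.
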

\begin{proof}
The domain of the Cayley transform of ${\rm Id}_\H$ is $({\rm Id}_\H-{\rm Id}_\H)\H=\{0\}$, and so the claim follows.  
\end{proof}

Given a path $U_t\in \mathcal{U}_p(\H)$, the operators $C(U_t)$ have not only moving domain, but in general moving {\em Hilbert spaces} on which they are densely-defined. These operators do not admit sensible extensions to the whole Hilbert space $\H$ compatible with the inverse Cayley transform (they would need to be defined as ``$\infty$'' on $\H_U^\perp$) as we can see in \eqref{eq:inverse-cayley}. 

It appears that the useful way to consider extensions of $C(U)$ to all of $\H$ is via graph projection continuity. This approach, described in Proposition \ref{prop:gap-cts-extension} below, extends the characterisation of gap continuity of paths of self-adjoint operators on a fixed space $\H$ in \cite{BLP} to the space $\mathcal{F}_p(\H)$.

\begin{defn}
\label{defn:extend-Cayley-inverse}
Let $1\leq p\leq\infty$.
Given $V\subset \H$ a closed subspace and $T\in \mathcal{F}_p^{sa}(V)$, define
the projection $P_T\in \B(\H)$ by
\[
P_T=P_{G(T)}+0_\H\oplus1_{V^\perp}
\]
where $P_{G(T)}\in B(\H\oplus \H)$ is the graph projection of $T$ extended from $V\oplus V$ to $\H\oplus \H$ by zero, and $1_{V^\perp}$ is the projection $\H\to V^\perp$.
\end{defn}

\begin{rmk}
The disturbing feature of Definition \ref{defn:extend-Cayley-inverse} is that $0\oplus 1_{V^\perp}$ is not the graph of an operator on $V^\perp$. This corresponds to the need to define any extension of $C^{-1}(T_V)$ to $\H$ as ``$\infty$'' on the complement, or equivalently extending the resolvent by $0$.
\end{rmk}

The following lemma show that the topology on $\mathcal{F}_p(\H)$ inherited from $\mathcal{U}_p(\H)$ can be equivalently described as continuity of the projections $P_T$ in the respective Schatten class.

\begin{prop}
\label{prop:gap-cts-extension}
Let $1\leq p\leq\infty$. A path $t\mapsto T_t\in \mathcal{F}_p(\H)$ is continuous  in $\mathcal{F}_p(\H)$ if and only if the path $t\mapsto P_{T_t}$ is continuous in $\L^p(\H\oplus \H)$. In particular, $U_t$ is a continuous path in $\mathcal{U}_p(\H)$ if and only if  $P_{C(U_t)}$ is a  continuous path in $\L^p(\H\oplus \H)$.  In this case, the path $t\mapsto (T_t\pm i)^{-1}$ extended to an operator on $\H$ by zero on $\H_{T_t}^\perp$  is continuous in $\L^p(\H).$
\end{prop}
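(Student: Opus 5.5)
The plan is to reduce everything to an explicit formula for $P_T$ in terms of the unitary $U=C^{-1}(T)\in\mathcal{U}_p(\H)$. For $T\in\mathcal{F}_p^{sa}(V)$ write $R_\pm=(T\mp i)^{-1}$ on $V$, extended by zero on $V^\perp$. The graph projection of a self-adjoint $T$ on $V$ is the standard block matrix
\[
P_{G(T)}=\begin{pmatrix}(1+T^2)^{-1} & T(1+T^2)^{-1}\\ T(1+T^2)^{-1} & T^2(1+T^2)^{-1}\end{pmatrix}.
\]
Here $(1+T^2)^{-1}=R_+R_-$, $T(1+T^2)^{-1}=\tfrac12(R_++R_-)$ and $T^2(1+T^2)^{-1}=1_V-(1+T^2)^{-1}$. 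Adding $0\oplus 1_{V^\perp}$ makes the lower right entry $\Id_\H-R_+R_-$. Hence
\[
P_T-\begin{pmatrix}0&0\\0&\Id_\H\end{pmatrix}=\begin{pmatrix}R_+R_-&\tfrac12(R_++R_-)\\ \tfrac12(R_++R_-)&-R_+R_-\end{pmatrix}.
\]
So $P_T$ is a constant plus a $2\times 2$ matrix with entries in $\L^p(\H)$ built from $R_\pm$. Next, using $U=(T+i)(T-i)^{-1}=\Id+2i(T-i)^{-1}$ on $V$ and $U=\Id$ on $V^\perp$, we obtain $R_+=(U-\Id)/(2i)$ on all of $\H$, extended by zero. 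Since $R_-=R_+^*$, we get $R_-=-(U^*-\Id)/(2i)$. Thus every entry of $P_T-(0\oplus\Id)$ is a fixed polynomial in $U-\Id$ and $U^*-\Id$, and conversely $U-\Id=2iR_+$.

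With this dictionary, the proof runs as follows. First, the metric $d_{\mathcal{F}_p}$ of Proposition \ref{prop:cayley-biject} is exactly $\|R_+(T)-R_+(S)\|_p=\tfrac12\|C^{-1}(T)-C^{-1}(S)\|_p$. Continuity in $\mathcal{F}_p(\H)$ is therefore equivalent to continuity of $t\mapsto U_t$ in $\mathcal{U}_p$, which is equivalent to $\L^p$-continuity of $t\mapsto R_{+,t}$, and also of $R_{-,t}=R_{+,t}^*$, since the adjoint is isometric on $\L^p$.

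Second, the forward direction: if $R_{\pm,t}$ are $\L^p$-continuous, then the entries $\tfrac12(R_++R_-)$ and $R_+R_-$ are $\L^p$-continuous. For the product this uses Hölder with the uniform bound $\|R_\pm\|\le1$, via $\|AB-A'B'\|_p\le\|A-A'\|_p\|B\|+\|A'\|\|B-B'\|_p$. Hence $P_{T_t}$ varies continuously in the $\L^p$ sense, interpreted modulo the constant $0\oplus\Id$, which is what the statement must mean for $p<\infty$.

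Third, the converse: if $P_{T_t}$ is $\L^p$-continuous, then so are its off-diagonal entry $\tfrac12(R_++R_-)=\operatorname{Re}R_+$ and its upper-left entry $R_+R_-=|R_+|^2$. Recovering $R_+$ itself requires $\operatorname{Im}R_+$, and this is the main obstacle. I plan to use the algebraic identity $R_+-R_-=2i\,R_+R_-$, which follows from the resolvent identity on $V$ and holds trivially on $V^\perp$. It gives $R_+=\operatorname{Re}R_++iR_+R_-$, expressing $R_+$ linearly in two of the entries of $P_T$. Continuity of $R_{+,t}$ in $\L^p$ follows, and hence so does continuity of $U_t=\Id+2iR_{+,t}$.

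Finally, the last assertion, that $(T_t\pm i)^{-1}$ extended by zero is $\L^p$-continuous, has already been obtained along the way, since these operators are $\pm R_{\pm,t}$ expressed through $U_t-\Id$ and its adjoint. The case $p=\infty$ is identical, with the operator norm in place of $\|\cdot\|_p$.
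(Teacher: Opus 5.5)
Your proposal is correct and follows essentially the paper's proof. The paper writes $P_{T_t}$ as the block matrix with entries $A_t=(\Id+T_t^2)^{-1}P_{\H_{T_t}}$ and $B_t=T_t(\Id+T_t^2)^{-1}P_{\H_{T_t}}$, and notes that $(T_t-i)^{-1}P_{\H_{T_t}}=B_t+iA_t$, where $A_t,B_t$ are its imaginary and real parts; this is exactly your identity $R_+=\tfrac12(R_++R_-)+iR_+R_-$. The only difference is that the paper obtains the forward direction linearly, by taking real and imaginary parts, rather than through your H\"older product estimate for $R_+R_-$; both arguments work.
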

\begin{proof}Let $t\mapsto T_t\in \mathcal{F}_p(\H)$ be a  continuous path  in $\mathcal{F}_p(\H)$.
 
By definition of the topology on $\mathcal{F}_p(\H)$, the convergence  \(T_t\to T_{t_0}\) in \(\mathcal F_p(\H)\) means that
\[
(T_t-i)^{-1}P_{\H_{T_t}}\to (T_{t_0}-i)^{-1}P_{\H_{T_0}}
\qquad \text{in } \mathcal L^p(\H)
\]
as \(t\to t_0\).
Set
\[
A_t:=(\Id+T_t^2)^{-1}P_{\H_{T_t}}, \qquad B_t:=T_t(\Id+T_t^2)^{-1}P_{\H_{T_t}},
\]
and similarly
\[
A_0:=(\Id+T_{t_0}^2)^{-1}P_{\H_{T_{t_0}}}, \qquad B_0:=T_{t_0}(\Id+T_{t_0}^2)^{-1}P_{\H_{T_{t_0}}}.
\]
The projection $P_{T_t}$ is given by 
\begin{align*}
P_{T_t}&=\begin{pmatrix} (\Id+T_t^2)^{-1}P_{\H_{T_t}}&T_t(\Id+T^2_t)^{-1}P_{\H_{T_t}}\\ T_t(\Id+T^2_t)^{-1}P_{\H_{T_t}}& \Id_{\H_t}-(\Id+T^2_t)^{-1}P_{\H_{T_t}}\end{pmatrix}+0\oplus\Id_{\H^\perp_t}\\
&=\begin{pmatrix} (\Id+T_t^2)^{-1}P_{\H_{T_t}}&T_t(\Id+T^2_t)^{-1}P_{\H_{T_t}}\\ T_t(\Id+T^2_t)^{-1}P_{\H_{T_t}}& \Id_{\H}-(\Id+T^2_t)^{-1}P_{\H_{T_t}}\end{pmatrix}\\
&=
\begin{pmatrix}
A_t & B_t\\
B_t &\Id_\H-A_t
\end{pmatrix},\end{align*}
and similarly, 
\[
P_{T_{t_0}}=
\begin{pmatrix}
A_0 & B_0\\
B_0 & \Id_\H-A_0
\end{pmatrix}.
\]

Since
\[
(T_t-i)^{-1}=(T_t+i)(\Id+T_t^2)^{-1},
\qquad
(T_{t_0}-i)^{-1}=(T_{t_0}+i)(\Id+T_{t_0}^2)^{-1},
\]
we have
\begin{align}\label{eq_T_P_via_graph}
(T_t-i)^{-1}P_{\H_{T_t}}=B_t+iA_t,
\qquad
(T_{t_0}-i)^{-1}P_{\H_{T_{t_0}}}=B_0+iA_0.
\end{align}
Furthermore,
\[
A_t=\frac{(T_t-i)^{-1}P_{\H_{T_t}}-\big((T_t-i)^{-1}P_{\H_{T_t}}\big)^*}{2i},
\qquad
B_t=\frac{(T_t-i)^{-1}P_{\H_{T_t}}+\big((T_t-i)^{-1}P_{\H_{T_t}}\big)^*}{2},
\]
and, similarly, for \(A_0,B_0\). Hence \(T_t\to T_{t_0}\) in \(\mathcal F_p(\H)\) implies that
\[
A_t\to A_0,\qquad B_t\to B_0
\]
in \(\mathcal L^p(\H)\) and so
\begin{equation}\label{eq_P_T_via_A,B}
P_{T_t}-P_{T_{t_0}}=
\begin{pmatrix}
A_t-A_0 & B_t-B_0\\
B_t-B_0 & -(A_t-A_0)
\end{pmatrix},
\end{equation}
ensuring that \(P_{T_t}\to P_{T_{t_0}}\) in \(\mathcal L^p(\H\oplus\H)\).

Conversely, assume that
\[
P_{T_t}\to P_{T_{t_0}} \qquad \text{in } \mathcal L^p(\H\oplus\H)
\]
as \(t\to t_0\). Then, by Equation \eqref{eq_P_T_via_A,B} we have that  \(
A_t\to A_0,\  B_t\to B_0\) as $t\to t_0$. Equality \eqref{eq_T_P_via_graph} implies that 
\[
(T_t-i)^{-1}P_{\H_{T_t}}-(T_{t_0}-i)^{-1}P_{\H_{T_{t_0}}}
=(B_t-B_0)+i(A_t-A_0)\to 0
\]
in \(\mathcal L^p(\H)\), proving that \(T_t\to T_{t_0}\) in \(\mathcal F_p(\H)\), as required. 
%
%
%
%
\end{proof}


\begin{corl}
\label{cor:strong-cts}
Let $t\mapsto T_t$ be a continuous path in $\mathcal{F}_\infty(\H)$ with $\H_t=\overline{\Dom(T_t)}\subset \H$. Then the following are equivalent:
\begin{enumerate}[noitemsep]
\item  The graph projections $P_{G(T_t)}$ are strongly continuous.
\item The projections onto $\H_t$ are strongly continuous.
\end{enumerate}

\end{corl}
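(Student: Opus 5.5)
The plan is to use the block decomposition of $P_{T_t}$ from the proof of Proposition~\ref{prop:gap-cts-extension}. Write $Q_t:=P_{\H_t}$. Then
\[
P_{G(T_t)}=\begin{pmatrix} A_t & B_t\\ B_t & Q_t-A_t\end{pmatrix},\qquad A_t=(\Id+T_t^2)^{-1}Q_t,\quad B_t=T_t(\Id+T_t^2)^{-1}Q_t,
\]
where $P_{G(T_t)}$ is extended by zero from $\H_t\oplus\H_t$. Continuity of $t\mapsto T_t$ in $\mathcal{F}_\infty(\H)$ means that $(T_t-i)^{-1}Q_t$ is norm continuous. By the formulas for $A_t,B_t$ as the real and imaginary parts of $(T_t-i)^{-1}Q_t$, both $A_t$ and $B_t$ are norm continuous, hence in particular strongly continuous.

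\textbf{(2) $\Rightarrow$ (1).} If $Q_t$ is strongly continuous, then each entry of the block matrix is strongly continuous: $A_t$ and $B_t$ are already norm continuous, and $Q_t-A_t$ is a difference of strongly continuous families. A $2\times 2$ operator matrix with strongly continuous entries is strongly continuous on $\H\oplus\H$, since strong convergence can be tested on vectors $(\xi,\eta)$ componentwise. So $P_{G(T_t)}$ is strongly continuous.

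\textbf{(1) $\Rightarrow$ (2).} Suppose $P_{G(T_t)}$ is strongly continuous. Compressing to the $(2,2)$ corner, i.e. taking $\eta\mapsto \pi_2P_{G(T_t)}(0,\eta)$, is strongly continuous and equals $(Q_t-A_t)\eta$. Adding the norm continuous $A_t$ gives strong continuity of $Q_t$.

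I do not expect a serious obstacle. The only point needing care is that, in this corollary, $P_{G(T_t)}$ is the graph projection extended by zero, not the projection $P_{T_t}$ of Definition~\ref{defn:extend-Cayley-inverse}, so its $(2,2)$ entry is $Q_t-A_t$ rather than $\Id-A_t$. I also need to check that the identification of the blocks from Proposition~\ref{prop:gap-cts-extension} holds verbatim for $p=\infty$, which it does, since that computation never used a Schatten norm. If instead $P_{T_t}$ were intended, it would be norm continuous by Proposition~\ref{prop:gap-cts-extension}, and the equivalence would have to be read through $P_{G(T_t)}=P_{T_t}-0\oplus(\Id-Q_t)$. Under that reading the same argument applies.
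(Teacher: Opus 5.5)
Your proof is correct and is essentially the paper's argument. The paper notes that $P_{T_t}=P_{G(T_t)}+0\oplus 1_{\H_t^\perp}$ is norm continuous by Proposition~\ref{prop:gap-cts-extension}, so strong continuity of $P_{G(T_t)}$ is equivalent to strong continuity of $1_{\H_t^\perp}$, and hence of the projection onto $\H_t$. You carry out the same reduction entrywise, using the norm continuity of $A_t$ and $B_t$ established in the proof of that proposition.
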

\begin{proof}The assertion follow immediately from the norm continuity of  
$P_{T_t}\oplus\big(\begin{smallmatrix}0&0\\0&1_{\H_t^\perp}\end{smallmatrix}\big)$.
\end{proof}

In the following lemma we specify how continuity of a unitary path translates to the continuity of the associated  path of self-adjoint operators under an additional assumption that they are all densely defined on a fixed Hilbert space.

\begin{lemma}
\label{lem:norm-res-cts}
Let $t\mapsto U_t$ be a continuous $p$-Schatten path of unitaries. Then if $C(U_t)$ are all defined on a fixed $\H$, the map $t\mapsto C(U_t)$ is $p$-norm resolvent continuous on $\H$, that is $\|(C(U_t)-\lambda)^{-1}-(C(U_s)-\lambda)^{-1}\|_p\to 0$ as $t\to s$ for all $\lambda\notin{\rm spec}(C(U_s))$.
\end{lemma}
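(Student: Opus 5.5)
The plan is to express the resolvent of $C(U_t)$ at an arbitrary point $\lambda$ as an explicit function of $U_t$. Since the $C(U_t)$ are densely defined on all of $\H$, each $U_t-\Id$ is injective, and we are in the setting of \cite{BLP}.

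First we solve $C(U)=i(U+\Id)(U-\Id)^{-1}$ for the resolvent. For $\lambda\notin{\rm spec}(C(U))$ we have
\[
C(U)-\lambda=\big(i(U+\Id)-\lambda(U-\Id)\big)(U-\Id)^{-1}=\big((i-\lambda)U+(i+\lambda)\Id\big)(U-\Id)^{-1}.
\]
This gives
\[
(C(U)-\lambda)^{-1}=(U-\Id)\big((i-\lambda)U+(i+\lambda)\Id\big)^{-1}.
\]
By the spectral mapping theorem, $\lambda\notin{\rm spec}(C(U))$ is equivalent to $\mu_\lambda:=-(i+\lambda)/(i-\lambda)\notin{\rm spec}(U)$. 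Here $\mu_\lambda$ is the inverse Cayley image of $\lambda$ and lies on the circle when $\lambda$ is real. In the degenerate case $\lambda=i$ the formula reads $(C(U)-i)^{-1}=(U-\Id)/(2i)$, which is directly $p$-continuous in $t$. Otherwise set $G_t(\lambda):=(U_t-\mu_\lambda)^{-1}$, so that
\[
(C(U_t)-\lambda)^{-1}=(i-\lambda)^{-1}(U_t-\Id)\,G_t(\lambda).
\]

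Next, fix $s$ and $\lambda\notin{\rm spec}(C(U_s))$. Then $\mu_\lambda\notin{\rm spec}(U_s)$. Since $\|U_t-U_s\|\le\|U_t-U_s\|_p\to0$, the operators $U_t-\mu_\lambda$ remain invertible for $t$ near $s$, and $\|G_t(\lambda)\|$ is uniformly bounded by a Neumann series argument. The resolvent identity gives
\[
G_t-G_s=G_t(U_s-U_t)G_s,
\]
so $\|G_t-G_s\|_p\to0$; in particular $G_t\to G_s$ in operator norm.

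Finally, we decompose
\[
(U_t-\Id)G_t-(U_s-\Id)G_s=(U_t-U_s)G_t+(U_s-\Id)(G_t-G_s).
\]
The first term has $p$-norm at most $\|U_t-U_s\|_p\|G_t\|$, and the second at most $\|U_s-\Id\|_p\|G_t-G_s\|$. Both tend to zero by Hölder's inequality, which proves the lemma.

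The main obstacle is ensuring that the algebraic identity for the resolvent holds on the correct domain, namely $\Dom(C(U))=(U-\Id)\H$. We will check it by verifying that $(U-\Id)G(\lambda)$ maps $\H$ onto $\Dom(C(U))$ and is a two-sided inverse of $C(U)-\lambda$ there. This uses the fact that $U$ and $G$ commute. The spectral correspondence between $\lambda$ and $\mu_\lambda$ follows from the same identity.
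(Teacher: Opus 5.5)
Your proof is correct and follows essentially the paper's route: the explicit formula $(C(U)-\lambda)^{-1}=(U-\Id)\big((i-\lambda)U+(i+\lambda)\big)^{-1}$, combined with a resolvent-identity estimate in $\L^p$. The only difference is one of bookkeeping. The paper collapses the difference into a single sandwich $\big((i-\lambda)U_t+(i+\lambda)\big)^{-1}(U_t-U_s)\big((i-\lambda)U_s+(i+\lambda)\big)^{-1}$ (up to a scalar), treats only non-real $\lambda$ explicitly, and defers real $\lambda$ to Reed--Simon. Your product-rule split, together with Neumann-series stability of invertibility, covers real $\lambda\notin{\rm spec}(C(U_s))$ in the same argument.
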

\begin{proof}
We will prove the result for $\lambda\not\in\R$, which ensures that $\lambda\not\in{\rm spec}(C(U_t))$. The case of real $\lambda\not\in{\rm spec}(C(U_t))$ follows similarly using \cite[Section VIII.7]{RS1}. Normality of $U_t$ and the resolvent identity yields
\begin{align*}
&(C(U_t)-\lambda)^{-1}-(C(U_s)-\lambda)^{-1}\\
&=(U_t-1)(i(U_t+1)-\lambda(U_t-1))^{-1}-(U_s-1)(i(U_s+1)-\lambda(U_s-1))^{-1}\\
&=(i(U_t+1)-\lambda(U_t-1))^{-1}(U_t-1)-(U_s-1)(i(U_s+1)-\lambda(U_s-1))^{-1}\\
&=(i(U_t+1)-\lambda(U_t-1))^{-1}\Big((U_t-1)(i(U_s+1)-\lambda(U_s-1))\\
&\qquad\qquad\qquad\qquad-(i(U_t+1)-\lambda(U_t-1))(U_s-1)\Big)(i(U_s+1)-\lambda(U_s-1))^{-1}\\
&=(i(U_t+1)-\lambda(U_t-1))^{-1}\Big((i+\lambda)(U_t-U_s)\Big)(i(U_s+1)-\lambda(U_s-1))^{-1}\\
&=((i-\lambda)U_t+(i+\lambda))^{-1}(i+\lambda)(U_t-U_s)((i-\lambda)U_s+(i+\lambda))^{-1}.
\end{align*}
Now as $\lambda\not\in\R$ we have $(\lambda+i)(\lambda-i)^{-1}\not\in\mathbb{T}$ and the spectral mapping property then gives (for $\lambda\neq i$)
\begin{align*}
\Vert (C(U_t)&-\lambda)^{-1}-(C(U_s)-\lambda)^{-1}\Vert_p\\
&\leq \Big|\frac{i+\lambda}{(i-\lambda)^2}\Big|\Vert U_t-U_s\Vert_p \,d({\rm spec}(U_t),(\lambda+i)(\lambda-i)^{-1})\,d({\rm spec}(U_s),(\lambda+i)(\lambda-i)^{-1}),
\end{align*}
where $d$ is the usual distance in $\C$. For $\lambda=i$ we find
\[
\Vert (C(U_t)-\lambda)^{-1}-(C(U_s)-\lambda)^{-1}\Vert_p\leq \frac{1}{2}\Vert U_t-U_s\Vert_p.\qedhere
\]
\end{proof}
Every bounded continuous function of $C(U_t)$ is strongly continuous and every $C_0$ function of $C(U_t)$ is norm continuous by the same proof as \cite[VIII.20]{RS1}. 
In particular, we obtain the following corollary. 
\begin{corl}
\label{cor:continuity-props}
Let $t\mapsto U_t\in\mathcal{U}_\infty$ be a uniform norm continuous path of unitaries such that the Cayley transforms $C(U_t)$ are all defined on a fixed Hilbert space $\H$.
Then the path $t\mapsto C(U_t)(1+C(U_t)^2)^{-1/2}$ is strongly continuous and for all $\epsilon>0$, the path $t\mapsto C(U_t)(1+C(U_t)^2)^{-1/2-\epsilon}$ is norm continuous.
\end{corl}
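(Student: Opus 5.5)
The plan is to reduce everything to continuous functional calculus for the unitaries $U_t$ themselves, since $C(U_t)$ is a function of $U_t$. Set $D_t:=C(U_t)$, which by assumption is self-adjoint on the fixed space $\H$; in particular $U_t-\Id$ is injective and $-1\notin$ the point spectrum issue does not arise, while $1$ is not an eigenvalue. Writing $U_t=e^{i\theta}$ spectrally with $\theta\in(0,2\pi)$ on $\H$, the Cayley transform gives $D_t=i(e^{i\theta}+1)(e^{i\theta}-1)^{-1}=-\cot(\theta/2)$. Hence for any Borel function $g$ on $\R$ we have $g(D_t)=\tilde g(U_t)$, where $\tilde g(e^{i\theta})=g(-\cot(\theta/2))$ for $\theta\in(0,2\pi)$. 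The point $e^{i\theta}=1$ carries no spectral mass, so the value of $\tilde g$ at $1$ is irrelevant for $g(D_t)$.

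For norm continuity, take $f_\epsilon(x)=x(1+x^2)^{-1/2-\epsilon}$. Since $f_\epsilon\in C_0(\R)$, the function $\tilde f_\epsilon$ extends continuously to the whole circle by setting $\tilde f_\epsilon(1)=0$, because $x\to\pm\infty$ corresponds to $\theta\to0,2\pi$. The map $U\mapsto \tilde f_\epsilon(U)$ is norm continuous on unitaries for any continuous function on $\mathbb{T}$. To see this, approximate $\tilde f_\epsilon$ uniformly on $\mathbb{T}$ by trigonometric polynomials $q$; each $q(U_t)$ is a polynomial in $U_t,U_t^*$ and so is norm continuous in $t$, and a standard $\varepsilon/3$ argument finishes. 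This is exactly the observation preceding the corollary ($C_0$ functions are norm continuous, as in \cite[VIII.20]{RS1}). Because $\tilde f_\epsilon(1)=0$ and $U_t$ has no eigenvalue $1$ on $\H$, we get $\tilde f_\epsilon(U_t)=f_\epsilon(D_t)$, which gives the norm continuity claim.

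For strong continuity, take $f_0(x)=x(1+x^2)^{-1/2}$. Then $\tilde f_0(e^{i\theta})=-\cos(\theta/2)$ on $(0,2\pi)$, which is bounded and continuous except for a jump at $\theta=0$: the limits are $-1$ as $\theta\to0^+$ and $+1$ as $\theta\to2\pi^-$. I would write $f_0=\lim_{\epsilon\to0}f_\epsilon$ pointwise, with $|f_\epsilon|\le1$, so that $f_\epsilon(D_t)\to f_0(D_t)$ strongly for each fixed $t$ by dominated convergence in the spectral measure. The uniformity in $t$ needed to pass strong continuity to the limit is the main obstacle. For a fixed vector $\xi$, I would bound $\|(f_0-f_\epsilon)(D_t)\xi\|^2$ by the spectral mass of $\xi$ for $U_t$ in a small arc around $1$, plus a small term. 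That arc mass is controlled uniformly for $t$ near $s$: $\chi_{\text{arc}}$ is dominated by a continuous bump $h$ supported near $1$, and $\langle h(U_t)\xi,\xi\rangle$ is continuous in $t$ by the norm continuity step. Near $t=s$, the quantity $\langle h(U_s)\xi,\xi\rangle$ is small when the arc is small, because $U_s$ has no eigenvalue $1$. Combining this estimate with the norm continuity of $f_\epsilon(D_t)$ then yields strong continuity of $t\mapsto f_0(D_t)$.

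An alternative I would keep in reserve is to invoke Lemma \ref{lem:norm-res-cts} with $p=\infty$, which gives norm resolvent continuity, and then quote the statement after that lemma directly, namely that bounded continuous functions such as $f_0$ give strong continuity and $C_0$ functions such as $f_\epsilon$ give norm continuity. The direct functional-calculus argument above is essentially that proof written out.
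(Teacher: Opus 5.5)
Your proof is correct, but your main route differs from the paper's. The paper's proof is your ``reserve'' alternative: Lemma \ref{lem:norm-res-cts} with $p=\infty$ gives norm resolvent continuity of $t\mapsto C(U_t)$ on the fixed space $\H$. Then \cite[VIII.20]{RS1} is quoted: $C_0$ functions of such a family are norm continuous, and bounded continuous functions are strongly continuous.

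Your direct route moves everything to continuous functional calculus of the unitaries on $\mathbb{T}$.
\begin{itemize}
\item For the norm statement you use norm continuity of $U\mapsto\tilde f_\epsilon(U)$ for $\tilde f_\epsilon\in C(\mathbb{T})$. This is the Stone--Weierstrass idea of Reed--Simon, with trigonometric polynomials in $U_t$ replacing polynomials in resolvents. That substitution is natural, since $(C(U)-i)^{-1}=\frac{1}{2i}(U-\Id)$.
\item For the strong statement you use a tightness argument on the spectral measures of $U_t$ near $1$. Reed--Simon instead test on vectors $g(D_s)\eta$ with $g\in C_0(\R)$ and use $fg\in C_0(\R)$.
\end{itemize}

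What your approach buys: it is self-contained, skips the resolvent-identity computation, and shows exactly where the fixed-Hilbert-space hypothesis enters, namely that $U_t$ has no eigenvalue $1$. In fact it shows the norm-continuity half survives for moving Hilbert spaces. Since $\tilde f_\epsilon(1)=0$, $\tilde f_\epsilon(U_t)$ equals $f_\epsilon(C(U_t))$ extended by $0$ on $\H_{U_t}^\perp$. The strong half genuinely needs $\mu^{U_s}_\xi(\{1\})=0$.

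What the paper's approach buys: it is shorter, and it gives the conclusion for all $C_0$ and all bounded continuous functions at once.

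One harmless slip: with $C(U)=i(U+\Id)(U-\Id)^{-1}$ one gets $C(e^{i\theta})=\cot(\theta/2)$, not $-\cot(\theta/2)$. Hence $\tilde f_0(e^{i\theta})=\cos(\theta/2)$, which only flips the signs of the one-sided limits at $\theta\to 0^+$ and $\theta\to 2\pi^-$.
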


Having an equivalent description of continuity of a path $t\mapsto T_t$ in $\mathcal{F}_\infty(\H)$ we can now define spectral flow for a path of operators which are not densely defined in the same Hilbert space.

\begin{defn}
\label{defn:SF_var_domain}Let $1\leq p\leq \infty$ and $t\mapsto T_t$, $t\in[0,1]$, be a continuous path in $\mathcal{F}_\infty(\H)$. Define the \emph{spectral flow} for $T_\bullet$ by setting 
\[
\textup{sf}(T_\bullet)=\textup{sf}(C^{-1}(T_\bullet)).
\]
\end{defn}

Translating from unitaries, we obtain exact forms on $\mathcal{F}_p(\H)$ and integral formulae for spectral flow.

\begin{defn}
\label{defn:one-forms_Cayley}
For $x\geq 0$ let 
\[
C_x^{-1}=\frac{\sqrt{\pi}\Gamma(x+1/2)}{\Gamma(x+1)}.
\]
Let $n$ be an integer $n\geq p-1$ and $r$ real with $r\geq (p-1)/2$. For $U\in\mathcal{U}_p(\H)$ set $\H_U=\overline{(U-{\rm Id})\H}$. Define one-forms on the 
tangent space to $\mathcal{F}_p(\H)$  at the point $C(U)$ by 
\[
\alpha_{C(U),n}(X):=C_{n/2}\frac{1}{2i}\Tr_{\H_U}(X(C(U)-i)^{-n})=C_{n/2}\Big(\frac{1}{2i}\Big)^{n+1}\Tr_\H(X(U-{\rm Id})^n)
\]
and for real $r\geq (p-1)/2$ define
\[
\beta_{C(U),r}(X):=-C_r\frac{1}{2}\Tr_{\H_U}(X|C(U)-i|^{-2r})=-iC_r\Big(\frac{1}{2}\Big)^{2r+1}\Tr_\H(X|U-{\rm Id}|^{2r}).
\]
\end{defn}
\begin{thm}
\label{thm:exact-cayley}
For $p\geq1$ and $n\geq p-1$ integral and $r$ real with $r\geq (p-1)/2$, the forms $\alpha_{C(U),n},\beta_{C(U),r}$ are exact.
\end{thm}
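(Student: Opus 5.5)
Proposition \ref{prop:cayley-biject} identifies $\mathcal{F}_p(\H)$ with $\mathcal{U}_p(\H)$ as Banach manifolds, and we transport the manifold structure through the Cayley transform $C$. With this identification, exactness of a one-form on $\mathcal{F}_p(\H)$ is equivalent to exactness of its pullback under $C$ on $\mathcal{U}_p(\H)$. The plan is therefore to compute the pullbacks of $\alpha_{C(U),n}$ and $\beta_{C(U),r}$, recognise them as constant multiples of the forms in Definition \ref{defn:one-forms}, and then invoke Lemmas \ref{lem:exctness} and \ref{lem:exactness}.

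\textbf{Step 1: the operator identities behind the equalities in Definition \ref{defn:one-forms_Cayley}.} On $\H_U$ we have
\[
C(U)-i=i(U+\Id)(U-\Id)^{-1}-i=2i(U-\Id)^{-1},
\]
so $(C(U)-i)^{-1}=\frac{1}{2i}(U-\Id)$ on $\H_U$. Extending by zero on $\H_U^\perp=\ker(U-\Id)$ agrees with $\frac{1}{2i}(U-\Id)$ on all of $\H$. Taking powers gives
\[
(C(U)-i)^{-n}=(2i)^{-n}(U-\Id)^n \quad\text{and}\quad |C(U)-i|^{-2r}=2^{-2r}|U-\Id|^{2r},
\]
the latter by the spectral theorem for the normal operator $U$. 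Both operators vanish on $\H_U^\perp$, so $\Tr_{\H_U}$ of the relevant products equals $\Tr_\H$ of the extended operators. Tangent vectors are identified through $dC$, i.e.\ with $X\in T_U\mathcal{U}_p$. This yields the stated equalities, which show
\[
\alpha_{C(U),n}=c_n\,\Tr(X(U-\Id)^n),\qquad \beta_{C(U),r}=-iC_r2^{-2r-1}\Tr(X|U-\Id|^{2r}),
\]
with an explicit constant $c_n$.

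\textbf{Step 2: reduction to the earlier lemmas.} By Step 1, $(C^{-1})^*\alpha_{C(\cdot),n}$ is a constant multiple of $\alpha_{U,n}$, and $(C^{-1})^*\beta_{C(\cdot),r}$ equals $\beta_{U,r}$. Lemmas \ref{lem:exctness} and \ref{lem:exactness} supply the primitives $\Theta$ and $\Xi$ from \eqref{eq:they-called-him-Theta} and \eqref{eq:they-called-him-Xi}. The functions $\mathrm{const}\cdot\Theta\circ C^{-1}$ and $\mathrm{const}\cdot\Xi\circ C^{-1}$ are then primitives on $\mathcal{F}_p(\H)$, because $d(f\circ C^{-1})=(C^{-1})^*df$.

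\textbf{Main obstacle.} Proposition \ref{prop:cayley-biject} defines the manifold structure only via the metric, so the delicate point is making sense of $T_{C(U)}\mathcal{F}_p(\H)$ and of differentiability of $C^{-1}$. I would handle this by definition: give $\mathcal{F}_p(\H)$ the charts pulled back from $\mathcal{U}_p(\H)$, so that $C$ is a diffeomorphism tautologically.

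To check that this is consistent with the metric $d_{\mathcal{F}_p}$, note that the metric is exactly $\tfrac12\|U-V\|_p$ by Step 1. Hence $C$ is an isometry up to the factor $\tfrac12$, and the pulled-back charts are compatible with the metric topology. Proposition \ref{prop:gap-cts-extension} additionally confirms that this topology matches $\L^p$-continuity of the graph projections $P_{T}$.
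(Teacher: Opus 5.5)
Your reduction is correct and is exactly the ``via the Cayley transform'' route the paper names. The paper also sketches a direct alternative, redoing the computations of Section~\ref{subsec:exact-forms} using $\frac{\d}{\d s}\big|_{s=0}(C(Ue^{sX})-i)^{-1}=\frac{1}{2i}UX$. Your check that $d_{\mathcal{F}_p(\H)}(C(U),C(U'))=\frac12\|U-U'\|_p$ cleanly justifies transporting the charts. Constants are immaterial here, which matters because the two expressions displayed for $\beta_{C(U),r}$ actually differ by a factor $i$.

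One caveat is inherited from Lemmas~\ref{lem:exctness} and~\ref{lem:exactness} rather than introduced by you. $\Theta$ and $\Xi$ depend on the choice of logarithm $Y$, so $\Theta\circ C^{-1}$ and $\Xi\circ C^{-1}$ are only local primitives. Indeed the forms cannot be globally exact: by Theorem~\ref{thm:cayley-spec-flow-formula}, their integrals around loops are fixed nonzero multiples of the spectral flow. What your transfer really establishes is closedness (local exactness), which is what the paper actually uses.
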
 

Theorem \ref{thm:exact-cayley} follows either via the Cayley transform or by using the following observations and arguing as in Section \ref{subsec:exact-forms}. Briefly, recall that
the exponential map takes tangent vectors $X\in T_{\rm Id}\mathcal{U}_p$  to right translations, 
\[
\exp(sX)\cdot U=Ue^{sX},\quad U\in \mathcal{U}_p,\quad X=-X^*\in \L^p(\H).
\]
We transfer this action to $\mathcal{F}_p(\H)$ via the Cayley transform $C$ so that
\[
\exp(sX)\cdot C(U):=C(Ue^{sX}).
\]
Since
\[
(C(U)-i)^{-1}=-\frac{i}{2}(U-{\rm Id})=\frac{1}{2i}(U-{\rm Id})
\]
we have
\[
\frac{\d}{\d s}\Big|_{s=0}(C(Ue^{sX})-i)^{-1}
=-\frac{i}{2}\frac{\d}{\d s}\Big|_{s=0}(Ue^{sX})=\frac{1}{2i}UX.
\]
With these observations Theorem \ref{thm:exact-cayley} can be proved directly.

Similarly, via the Cayley transform, Theorem \ref{thm:cayley-spec-flow-formula} provides an integral formula for the spectral flow for a path of operators defined in varying Hilbert spaces. To state this theorem, we firstly define a notion of ``vanishing at infinity'' for a path of operators from $\mathcal{F}_\infty(\H)$. 

\begin{defn}
\label{defn:cvge-zero}Let $1\leq p\leq \infty$. 
We say that a path $\R\ni t\mapsto T_t\in \mathcal{F}_p(\H)$ vanishes at infinity (in Schatten $p$-norm) if $P_{T_t}\to \begin{pmatrix}
0&0\\0&\Id_\H
\end{pmatrix}$ in norm $\L^p(\H\oplus \H)$ (or equivalently, $T_t$ converges to the zero operator on the zero subspace in $p$-norm resolvent sense). 
\end{defn}

Proposition \ref{prop:gap-cts-extension} implies that  if the path $T_t$ vanishes at infinity in the Schatten $p$-norm then the path of unitaries
\[
C^{-1}(T_t)=\begin{cases}(T_t+ i)(T_t- i)^{-1}&\mbox{ on }\H_t\\ {\rm Id}&\mbox{ on }\H_t^\perp\end{cases}
\]
converges to ${\rm Id}_\H$ in the Schatten $p$-norm, and so corresponds to a loop.

\begin{thm}
Let $p\geq 1$ and let $\R\ni t\mapsto T_t$ be a differentiable path in $\mathcal{F}_p(\H)$ which vanishes at infinity in Schatten $p$-norm. 
Then for $r\geq (p-1)/2$ or $n\geq p-1$ we have 
\begin{align*}
\textup{sf}(T_\bullet)&=\frac{-i}{2^{2r+1}}\frac{\Gamma(r+1)}{\sqrt{\pi}\Gamma(r+1/2)}\int_0^1\Tr\Big(C^{-1}(T_t)^*\frac{\d}{\d t}(C^{-1}(T_t))|C^{-1}(T_t)-\Id|^{2r}\Big)\,\d t\\
&=(-1)^n\frac{1}{2\pi i}\int_0^1\Tr\Big(C^{-1}(T_t)^*\frac{\d}{\d t}(C^{-1}(T_t))(C^{-1}(T_t)-\Id)^{n}\Big)\,\d t,
\end{align*}
whenever the integral converges. 
\end{thm}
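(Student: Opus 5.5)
The plan is to reduce everything to the unitary setting through the inverse Cayley transform, and then quote Corollary \ref{cor:cayley-spec-flow-formula}, the version of Theorem \ref{thm:cayley-spec-flow-formula} for paths indexed by $\R$. Set $U_t:=C^{-1}(T_t)$, with $C^{-1}$ as in \eqref{eq:inverse-cayley}. By Proposition \ref{prop:cayley-biject}, $U_t\in\mathcal{U}_p(\H)$ for every $t$. Definition \ref{defn:SF_var_domain} gives $\textup{sf}(T_\bullet)=\textup{sf}(U_\bullet)$, so it suffices to check that $U_\bullet$ satisfies the hypotheses of the corollary.

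First I check the regularity of $U_\bullet$. On $\H_{T_t}$ we have $U_t-\Id=2i(T_t-i)^{-1}$, and on $\H_{T_t}^\perp$ we have $U_t-\Id=0$. Hence on all of $\H$,
\[
U_t-\Id=2i\,(T_t-i)^{-1}P_{\H_{T_t}}.
\]
The metric on $\mathcal{F}_p(\H)$ in Proposition \ref{prop:cayley-biject} is exactly the $\L^p$-distance between the operators $(T-i)^{-1}P_V$. So the map $T\mapsto C^{-1}(T)$ is, up to the affine rescaling above, an isometry onto $\mathcal{U}_p$. I take differentiability of $t\mapsto T_t$ in $\mathcal{F}_p(\H)$ to mean differentiability in this chart, that is, $\L^p$-differentiability of $t\mapsto (T_t-i)^{-1}P_{\H_{T_t}}$. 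With that reading, $U_\bullet$ is $\L^p$-differentiable on every compact interval.

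Next I check the behaviour at infinity. Vanishing at infinity (Definition \ref{defn:cvge-zero}) means $P_{T_t}\to 0\oplus\Id_\H$ in $\L^p(\H\oplus\H)$. In the notation of the proof of Proposition \ref{prop:gap-cts-extension}, this says $A_t\to 0$ and $B_t\to 0$ in $\L^p$. By \eqref{eq_T_P_via_graph} it follows that $(T_t-i)^{-1}P_{\H_{T_t}}\to 0$, and therefore $U_t\to\Id$ in $\L^p$ as $t\to\pm\infty$. This is consistent with Lemma \ref{lem:Id-zero}. So $U_\bullet$ is a loop based at $\Id$ in the sense of the $\R$-indexed version of Corollary \ref{cor:cayley-spec-flow-formula}.

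The integrability condition is exactly the hypothesis ``whenever the integral converges''. The corollary then yields both formulae, with the trace over $\H$ of $U_t^*\dot U_t|U_t-\Id|^{2r}$ and of $U_t^*\dot U_t(U_t-\Id)^n$. I will reparametrize $\R$ to $[0,1]$ by $t=(1+e^{-s})^{-1}$, as in the proof of that corollary. This matches the displayed integrals over $[0,1]$, because the one-form integrals are invariant under reparametrization.

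The only real obstacle is the meaning of ``differentiable path in $\mathcal{F}_p(\H)$'' when the Hilbert spaces $\H_{T_t}$ move. If differentiability is instead phrased through the graph projections, I would use the formula for $P_{T_t}$ in terms of $A_t,B_t$ from Proposition \ref{prop:gap-cts-extension}. That formula shows that $\L^p$-differentiability of $P_{T_t}$ is equivalent to differentiability of $A_t$ and $B_t$, hence of $B_t+iA_t=(U_t-\Id)/(2i)$. The linear relations used in that proof for continuity work equally well for derivatives, so the two notions agree and the reduction goes through.
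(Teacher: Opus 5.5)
Your proposal is correct and is essentially the paper's own argument. The paper justifies this theorem only by remarking that Proposition \ref{prop:gap-cts-extension} turns vanishing at infinity into $C^{-1}(T_t)\to\Id$ in $\L^p$, so that $C^{-1}(T_\bullet)$ is a loop in $\mathcal{U}_p$; Theorem \ref{thm:cayley-spec-flow-formula}, in the $\R$-indexed form of Corollary \ref{cor:cayley-spec-flow-formula}, then applies through Definition \ref{defn:SF_var_domain}. You simply supply the details the paper leaves implicit: the identity $C^{-1}(T)-\Id=2i(T-i)^{-1}P_V$, the meaning of differentiability in the chart transported from $\mathcal{U}_p$, and the reparametrisation of $\R$ to $[0,1]$ that makes sense of the displayed $\int_0^1$.
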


To complete this subsection, we discuss the application of our results for continuous fields of Hilbert spaces.  With the description of continuity of paths in $\mathcal F_\infty(H)$ we immediately obtain the following characterisation.
\begin{thm}
Let $(-1,1)\ni t\mapsto H_t\subset H$ be a continuous field of Hilbert spaces, and $T_t$ a family of densely-defined self-adjoint operators on each $H_t$.

Then the set
\[
\Gamma=\{\xi_\bullet:\,\xi_t\in{\rm Dom}(T_t)\ \mbox{and}\  t\mapsto\langle\xi_t,\xi_t\rangle_t\ \mbox{ is continuous}\} 
\] 
is dense in $\int_{(-1,1)}^\oplus H_t$ if and only if
$t\mapsto T_t$ is a continuous path in $\mathcal{F}_\infty(H)$ and the graph projections $t\mapsto P_{G(T_t)}$ are strongly continuous.
Moreover, such a path has $t\mapsto (T_t\pm i)^{-1}$ in $C_0(K(H))$ provided $(T_t)$ vanishes at infinity.
\end{thm}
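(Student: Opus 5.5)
Two things have to be proved: the density of $\Gamma$ is equivalent to the two continuity conditions, and the vanishing-at-infinity statement. I would pass everything through the Cayley transform and the extended graph projections $P_{T_t}$ of Definition \ref{defn:extend-Cayley-inverse}. By Proposition \ref{prop:gap-cts-extension}, continuity of $t\mapsto T_t$ in $\mathcal{F}_\infty(H)$ is the same as norm continuity of $t\mapsto P_{T_t}$ on $H\oplus H$. Corollary \ref{cor:strong-cts} then says that strong continuity of $P_{G(T_t)}$ is equivalent to strong continuity of the projections $Q_t$ onto $H_t$. Strong continuity of the $Q_t$ is essentially what it means for $(H_t)$ to be a continuous field inside the fixed space $H$, with continuous sections $t\mapsto Q_t\xi$.

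For the direction ``continuity $\Rightarrow$ density'' I would build sections of $\Gamma$ explicitly. For $\eta\in H$, set
\[
\xi_t:=(T_t-i)^{-1}Q_t\eta=\tfrac{1}{2i}\big(C^{-1}(T_t)-\Id\big)\eta .
\]
These lie in $\mathrm{Dom}(T_t)$, since $(T_t-i)^{-1}$ maps $H_t$ onto the domain. They are norm continuous in $t$, because by Proposition \ref{prop:gap-cts-extension} the extended resolvents vary continuously in operator norm. Multiplying by functions $f\in C_c(-1,1)$ and taking the $C_0(-1,1)$-module span gives a submodule of $\Gamma$. Its closure contains every $f(t)\,(T_t-i)^{-1}Q_t\eta$. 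Because $\mathrm{Ran}\,(T_t-i)^{-1}$ is dense in $H_t$ and the sections $Q_t\eta$ generate the field, a partition-of-unity argument (approximating $Q_t\eta$ locally uniformly by elements of $\mathrm{Dom}(T_{t_0})$) gives density in $\int^\oplus H_t$.

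The converse, ``density $\Rightarrow$ continuity'', is the step I expect to be the main obstacle, and I would argue through graphs. For $\xi\in\Gamma$, read ``continuous'' as meaning that $t\mapsto\xi_t$ is continuous in $H$ and $t\mapsto T_t\xi_t$ is continuous as well, i.e.\ the graph vectors $(\xi_t,T_t\xi_t)$ are continuous in $H\oplus H$. Density then means that the graph vectors form a dense family of continuous sections of the field $\{G(T_t)\oplus(0\oplus H_t^\perp)\}$. A standard lemma for projection-valued families says that a family of projections admitting a dense set of continuous sections, with the complementary family also admitting one, is strongly continuous. The complementary sections come from $(-T_t\eta,\eta)$, which by self-adjointness lie in $G(T_t)^\perp$. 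This yields strong continuity of $P_{G(T_t)}$.

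What remains is to upgrade to the $\mathcal{F}_\infty$ (norm) continuity. The strong continuity just obtained gives strong continuity of the extended resolvents $(T_t\pm i)^{-1}$, but Proposition \ref{prop:gap-cts-extension} requires norm continuity. Here I would use compactness of the resolvents, since $T_t\in\mathcal{F}_\infty(H)$: on precompact sets strong convergence is uniform, and compactness of $(T_t-i)^{-1}$ together with strong continuity of the adjoints gives norm continuity of the compact operators. I am least certain about this last upgrade, because strong convergence of compacts does not by itself give norm convergence. If it fails, I would instead read ``$\mathcal{F}_\infty$-continuity'' into the standing hypothesis on the field.

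Finally, if $(T_t)$ vanishes at infinity, Definition \ref{defn:cvge-zero} together with Proposition \ref{prop:gap-cts-extension} shows that the extended resolvents $(T_t\pm i)^{-1}=\frac{1}{2i}(C^{-1}(T_t)-\Id)$ tend to $0$ in norm as $t\to\pm1$. They are also compact and norm continuous, so they define an element of $C_0((-1,1),\K(H))$.
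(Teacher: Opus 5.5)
Your ``if'' direction and the $C_0$ claim are fine. They are also all that the paper's justification supports: the paper gives no proof beyond saying the theorem follows immediately from Proposition~\ref{prop:gap-cts-extension} and Corollary~\ref{cor:strong-cts}, and that is exactly what you use. You get norm continuity of $R_t=(T_t-i)^{-1}Q_t$, and $T_tR_t\eta=Q_t\eta+iR_t\eta$ is continuous because $Q_t$ is strongly continuous. For $(T_t+i)^{-1}$ take adjoints, since your formula $\frac{1}{2i}(C^{-1}(T_t)-\Id)$ only covers the minus sign.

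The ``only if'' direction has two genuine gaps. First, you were right that the literal $\Gamma$ (only $\|\xi_t\|$ continuous) carries no information about $T_t$ and must be strengthened. But after strengthening it to graph-continuous sections, it does not follow that density ``means'' the graph vectors are pointwise dense in $G(T_t)$. Density in $\int^\oplus H_t$ is density in the $H_t$-norm. It says nothing about whether $\{\xi_t:\xi\in\Gamma\}$ is a core for $T_t$, which is what your projection lemma needs.

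Here is a counterexample. Take $H_t=L^2(0,1)$, with $T_t$ the Dirichlet Laplacian for $t\neq0$ and the Neumann Laplacian at $t=0$. The sections $f(t)\phi$, with $f\in C_c(-1,1)$ and $\phi\in C_c^\infty(0,1)$, have $(\xi_t,T_t\xi_t)=f(t)(\phi,-\phi'')$ continuous, so your $\Gamma$ is dense. Yet closedness of the Dirichlet operator forces $\xi_0\in H^2_0(0,1)$ for every $\xi\in\Gamma$. This space is not a core for the Neumann operator, and $P_{G(T_t)}$ jumps at $t=0$. So pointwise graph-norm density has to be a hypothesis. With it, your lemma does give strong continuity of $P_{G(T_t)}$, and conversely.

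Second, the strong-to-norm upgrade you doubted really fails, and no argument can rescue it. On $H_t=\ell^2(\N)$ let $T_t={\rm diag}(\mu_n(t))$ with $\mu_n(t)=n|1-nt|$. Every $T_t$ has compact resolvent. Continuous finite combinations of basis vectors lie in $\Gamma$ and give pointwise cores, and the graph projections are strongly continuous. But $\|(T_{1/n}-i)^{-1}-(T_0-i)^{-1}\|\geq|i-(n-i)^{-1}|\to1$, so the path is not continuous in $\mathcal{F}_\infty(H)$. More simply, $T_t=0$ on a constant infinite-dimensional field has dense $\Gamma$ and constant graph projections, but non-compact resolvent.

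Density of sections is a strong-topology condition. In the paper, norm continuity of resolvents only ever comes from compactness of $(T\pm i)^{-1}$ as a module endomorphism, via $\End^0_{C_0(\R)}(H\otimes C_0(\R))\cong C_0(\R,\K(H))$, as in Theorem~\ref{prop:KK-paths}. Your fallback of putting $\mathcal{F}_\infty$-continuity into the hypotheses assumes the conclusion. So the ``only if'' direction is false as stated and cannot be proved. What your method actually establishes is the ``if'' direction, together with the equivalence between pointwise graph-norm density of $\Gamma$ and strong continuity of $P_{G(T_t)}$.
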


\subsection{Kasparov modules}

We now relate our  constructions to Kasparov theory.  Namely, we  will show that unbounded Kasparov modules $(\C,X_{C_0(\R)},T)$ can be described precisely using paths of operators $T_t\in  \mathcal{F}_\infty(\H)$  which vanish at infinity in the operator norm. 

Recall that an unbounded Kasparov module $(\C,X_{C_0(\R)},T)$ representing a class in $KK^1(\C,C_0(\R))$ consists of a right Hilbert module $X$ over $C_0(\R)$ and $T:\Dom(T)\subset X\to X$ a self-adjoint regular operator whose resolvent $(T\pm i)^{-1}$ is a compact endomorphism of $X$. All of these notions can be found in \cite{Lance}.

For a complex $C^*$-algebra $A$, \cite[Section 3]{BKR} shows that the Cayley transform gives an explicit isomorphism $K_1(A)\to KK^1(\C,A)$.
In the case $A=C_0(\R)$, elements of $K_1(A)$ are represented by elements of $\mathcal{U}_\infty$. Using the identification of $\mathcal{U}_\infty$ and $\mathcal{F}_\infty$ provides most of the information we require.


\begin{thm}
\label{prop:KK-paths}
Every unbounded Kasparov module $(\C,X_{C_0(\R)},T)$ defining a class in $KK^1(\C,C_0(\R))$ is unitarily equivalent to a cycle $(\C,P_\bullet(\H\ox C_0(\R)),\tilde{T}_\bullet)$ with $t\mapsto P_t$ a strongly continuous path of projections on $\H$ and $t\mapsto \tilde{T}_t$ a continuous path in $\mathcal{F}_\infty(\H)$ vanishing at infinity.

Conversely, every continuous path $t\mapsto T_t$ in $\mathcal{F}_\infty(\H)$  vanishing at infinity  whose graph projections $P_{G(T_t)}$ are strongly continuous defines an unbounded Kasparov module $(\C,(\H_{\bullet})_{C_0(\R)},T_\bullet)$ and so a class in $KK^1(\C,C_0(\R))$. Here
$\H_t\subset \H$ is the subspace on which $T_t$ is densely-defined.
\end{thm}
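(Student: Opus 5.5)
The plan is to reduce the statement to the unitary picture through the Cayley transform. Proposition \ref{prop:cayley-biject} identifies $\mathcal{F}_\infty(\H)$ with $\mathcal{U}_\infty(\H)$. Proposition \ref{prop:gap-cts-extension} turns continuity in $\mathcal{F}_\infty(\H)$ into norm continuity of $C^{-1}(T_t)$, equivalently of the zero-extended resolvents $(T_t\pm i)^{-1}$. We then use the known isomorphism $K_1(C_0(\R))\to KK^1(\C,C_0(\R))$ of \cite[Section 3]{BKR}, applied at the level of cycles rather than classes.

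\textbf{Forward direction.} Start from an unbounded Kasparov module $(\C,X_{C_0(\R)},T)$.

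1. Apply Kasparov's stabilisation theorem \cite{Ka1.5}. This embeds $X$ as a complemented submodule of $\H\ox C_0(\R)$, so $X\cong P(\H\ox C_0(\R))$ for a projection $P\in\End^*(\H\ox C_0(\R))$. The algebra $\End^*(\H\ox C_0(\R))$ is the set of strictly continuous bounded maps $\R\to\B(\H)$, so $P$ is a strongly continuous path $t\mapsto P_t$ of projections.

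2. Transport $T$ along this unitary equivalence to get $\tilde T$. Since $\tilde T$ is regular and self-adjoint, its Cayley transform $U=(\tilde T+i)(\tilde T-i)^{-1}$ is a unitary on $X$. Extend it by ${\rm Id}$ on $(1-P)(\H\ox C_0(\R))$.

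3. Compactness of the resolvent gives $(\tilde T-i)^{-1}\in\K(X)\subset C_0(\R,\K(\H))$. Hence $U-{\rm Id}=2i(\tilde T-i)^{-1}$ is a norm-continuous, $\K(\H)$-valued function vanishing at infinity. So $t\mapsto U_t$ is a norm-continuous path in $\mathcal{U}_\infty$ with $U_t\to{\rm Id}$.

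4. Define $\tilde T_t:=C(U_t)$. Proposition \ref{prop:gap-cts-extension} shows this is a continuous path in $\mathcal{F}_\infty(\H)$. By Lemma \ref{lem:Id-zero} and Definition \ref{defn:cvge-zero}, it vanishes at infinity.

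5. Identify $\H_t=\overline{(U_t-{\rm Id})\H}$ with $P_t\H$. If these differ, restrict $\tilde T_t$ to the fibre $P_t\H$ and put the operator $\infty$ on the complement, which matches the convention of Definition \ref{defn:extend-Cayley-inverse}.

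6. Check that evaluating $\tilde T$ fibrewise at $t$ recovers $\tilde T_t$. This follows because evaluation $X\to X\ox_{{\rm ev}_t}\C$ commutes with continuous functional calculus, and hence with the resolvent.

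\textbf{Converse.} Take a continuous path $T_t$ in $\mathcal{F}_\infty(\H)$ that vanishes at infinity and has strongly continuous graph projections.

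1. By Corollary \ref{cor:strong-cts}, the projections $P_t$ onto $\H_t$ are strongly continuous. Thus $X:=P_\bullet(\H\ox C_0(\R))$, the sections of the field $\{\H_t\}$, is a Hilbert $C_0(\R)$-module. It is complemented in $\H\ox C_0(\R)$ because $P_\bullet$ is adjointable, being strictly continuous and bounded.

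2. Define $T$ on the sections $\xi$ with $\xi_t\in\Dom(T_t)$ and $t\mapsto T_t\xi_t$ in $X$. Let $R_\pm(t)=(T_t\pm i)^{-1}$, extended by zero on $\H_t^\perp$. By Proposition \ref{prop:gap-cts-extension} these are norm continuous into $\K(\H)$, and vanishing at infinity gives $R_\pm\in C_0(\R,\K(\H))$.

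3. Conclude that $R_\pm$ are compact endomorphisms of $X$ whose ranges, namely $\Dom(T)$, are dense. So $T\mp i$ are surjective with adjoint inverses, which is exactly self-adjointness together with regularity \cite{Lance}.

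4. The main step is density of $\Dom(T)$, that is, of the range of $R_+$ in $X$. For this I would use strong continuity of $P_{G(T_t)}$ to build an approximate unit: set $f_\epsilon(T_t)=(1+\epsilon T_t^2)^{-1}$, extended by zero, and show $f_\epsilon(T_\bullet)\xi\to\xi$ for every section $\xi$.

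I expect this density argument, and also the matching of $\H_t$ with $P_t\H$ in the forward direction (where $U_t-{\rm Id}$ may fail to have dense range in $P_t\H$), to be the main obstacles.
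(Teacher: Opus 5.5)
Your forward direction is essentially the paper's. Both arguments use Kasparov stabilisation together with the identifications $\End_{C_0(\R)}(\H\ox C_0(\R))\cong C_b(\R,\B(\H)_{*-s})$ and $\End^0_{C_0(\R)}(\H\ox C_0(\R))\cong C_0(\R,\K(\H))$. These show that the zero-extended resolvent is a norm-continuous, compact-valued function vanishing at infinity. The paper phrases this through graph projections and you phrase it through $U_t$; the two are equivalent by Proposition \ref{prop:gap-cts-extension}.

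The worry in your step 5 is unfounded, so the fallback of ``putting $\infty$ on the complement'' is not needed. Let $R_t=(U_t-\Id)/2i$ be the fibre of $(\tilde T-i)^{-1}$. Then $\Dom(\tilde T)=(\tilde T-i)^{-1}X$ is dense in $X$. Evaluation at $t$ is a continuous surjection $X\to P_t\H$ (use $P(h\ox f)$ with $f(t)=1$), and a continuous surjection maps dense sets to dense sets. Hence $\{R_t\eta(t):\eta\in X\}$ is dense in $P_t\H$, that is, $\overline{(U_t-\Id)\H}=P_t\H$.

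For the converse you take a genuinely different route.

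\begin{itemize}
\item The paper proves regularity with the local--global principle of Pierrot and Kaad--Lesch. It checks that the orthogonal complement of the graph localises correctly in each fibre; this is where strong continuity of $P_{G(T_t)}$ is used, to lift fibre vectors to sections. It then deduces self-adjointness from pointwise self-adjointness and treats compactness of the resolvent separately.
\item You observe instead that the assembled resolvents $R_\pm\in C_0(\R,\K(\H))$ are two-sided inverses of $T\pm i$; this uses your domain condition $t\mapsto T_t\xi_t\in X$. So $T\pm i$ are surjective. For a densely defined symmetric operator this gives self-adjointness and regularity at once, and compactness of the resolvent comes for free.
\item In your version the graph-projection hypothesis enters only through Corollary \ref{cor:strong-cts}, to make $X$ a complemented submodule with fibres $\H_t$.
\end{itemize}

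Your route is more elementary. It also isolates the density of $\Dom(T)$, which the local--global principle presupposes but the paper does not check.

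Your step 4 can be closed as follows.

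\begin{itemize}
\item Write $f_\epsilon(T_\bullet)\xi=R_+\,g_\epsilon(T_\bullet)\xi$ with $g_\epsilon(s)=(s+i)(1+\epsilon s^2)^{-1}$; this shows $f_\epsilon(T_\bullet)\xi\in\Dom(T)$.
\item For compactly supported $\xi\in X$, consider $t\mapsto\|\xi(t)-f_\epsilon(T_t)\xi(t)\|^2=\int\big(\epsilon s^2(1+\epsilon s^2)^{-1}\big)^2\,\d\mu_{\xi(t)}(s)$. These functions are continuous in $t$, because $C_0$-functions of $T_t$ are norm continuous. They decrease as $\epsilon\downarrow0$, and tend to $0$ pointwise because $\xi(t)\in\H_t$.
\item Dini's theorem then gives uniform convergence on the compact support.
\item Compactly supported sections are dense in $X$, which completes the density argument.
\end{itemize}
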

\begin{proof}
Given an unbounded Kasparov module $(\C,X_{C_0(\R)},T)$, let $v:X\to \H\ox C_0(\R)$ be an isometry for some separable Hilbert space $\H$: such a $v$ exists by Kasparov's stabilisation theorem \cite{Ka1.5}. Then the unbounded cycle $(\C,X_{C_0(\R)},T)$ is unitarily equivalent, via $v$, to
\[
(\C,vv^*(\H\ox C_0(\R)),vTv^*).
\]
The map
\[
\begin{pmatrix} 0&v^*\\v&0\end{pmatrix}:\begin{pmatrix} X\\ \H\ox C_0(\R)\end{pmatrix} \to \begin{pmatrix} X\\ \H\ox C_0(\R)\end{pmatrix}
\]
is a bounded self-adjoint operator, and so too then is its square,
\[
\begin{pmatrix} {\rm Id}_X&0\\0&vv^*\end{pmatrix}
\]
as is the composition with  the evaluation at $t$, denoted $ev_t$. For the second component $vv^*$ we recall from \cite[Section 2.3]{Raeburn-Williams} that
\[
\End_{C_0(\R)}(\H\ox C_0(\R))\cong C_b(\R,\B(\H)_{*-s})
\]
where $\B(\H)_{*-s}$ is the bounded operators with the $*$-strong topology. Hence $t\mapsto ev_t\circ vv^*$ is a strongly continuous family of projections.
Let $v_t:X\to \H$ be the composition $ev_t\circ v$.

The compactness of the resolvent of $T$ gives the compactness of $(\pm ivv^*+vTv^*)^{-1}$. Using the results of \cite[Section 2.3]{Raeburn-Williams}
\[
\End_{C_0(\R)}^0(\H\ox C_0(\R))\cong C_0(\R,\K(\H))
\]
we see that $t\mapsto (\pm iv_tv_t^*+v_tTv_t^*)^{-1}$ is a norm continuous family of compacts vanishing at infinity. As the domain of $v_tTv_t^*$ is the image of $(\pm iv_tv_t^*+v_tTv_t^*)^{-1}$, all paths of vectors in the domain of $v_tTv_t^*$ converge to zero as $t\to\pm\infty$. Thus $v_tTv_t^*$ vanishes at infinity.

Moreover, the family of graph projections 
\[
P_{G(v_tTv_t^*)}=\begin{pmatrix} (v_tv_t^*+(v_tTv_t^*)^2)^{-1} & v_tTv_t^*(v_tv_t^*+(v_tTv_t^*)^2)^{-1}
\\ v_tTv_t^*(v_tv_t^*+(v_tTv_t^*)^2)^{-1} & v_tv_t^*-(v_tv_t^*+(v_tTv_t^*)^2)^{-1}
\end{pmatrix}
\]
is strongly continuous, and $P_{G(v_tTv_t^*)}\oplus\big(\begin{smallmatrix} 0 & 0\\ 0&1-v_tv_t^*\end{smallmatrix}\big)$ is norm continuous. Hence the family 
$t\mapsto v_tTv_t^*$ is in $F_\infty(\H)$, or equivalently, the path of unitaries
\[
t\mapsto \begin{cases} (iv_tv_t^*+v_tTv_t^*)(- iv_tv_t^*+v_tTv_t^*)^{-1} & v_tv_t^*\H\\ (1-v_tv_t^*) & (1-v_tv_t^*)\H\end{cases}
\]
is in $\mathcal{U}_\infty(\H)$.

For the converse, let $t\mapsto T_\bullet\in\mathcal{F}_\infty(\H)$ be a continuous path, and let $\H_t$ be the closure of $\Dom(T_t)$ in $\H$. Let 
\[
X_{C_0(\R)}=\{\sigma:\R\to \H:\,\sigma\in C_0(\R,\H),\,\sigma(t)\in \H_t\}
\]
with the multiplication action of $C_0(\R)$ and inner product 
\[
\pairing{\sigma_1}{\sigma_2}_X(t)=\pairing{\sigma_1(t)}{\sigma_2(t)}_\H. 
\]
Then $X$ is a Hilbert module, in particular complete, by \cite[Example 2.13]{Raeburn-Williams}.

Let $T_\bullet$ be the unbounded operator on $X_{C_0(\R)}$ given by
\[
\Dom(T_\bullet)=\{\sigma\in X:\,\sigma(t)\in \Dom(T_t)\},\qquad (T_\bullet\sigma)(t)=T_t(\sigma(t)).
\]
Then $T_\bullet$ is regular and self-adjoint. To see this, consider the graph projection $P_{G(T_\bullet)}:X\oplus X\to X\oplus X$. By the local-global principle of Pierrot and Kaad-Lesch \cite{KL12,P}, the submodule $P_{G(T_\bullet)}(X\oplus X)$ is complemented if and only if \cite[Proposition 1.6]{MR} for all $t\in\R$
\[
\big((P_{G(T_\bullet)}(X\oplus X))^\perp\big)_t=(P_{G(T_t)}(\H_t\oplus \H_t))^\perp\subset \H\oplus \H.
\]
Given $(\sigma_1,\sigma_2)\in (P_{G(T_\bullet)}(X\oplus X))^\perp$, for every $\rho\in \Dom(T_\bullet)$ we have for every $t\in\R$
\[
0=\pairing{(\rho,T_\bullet\rho)}{(\sigma_1,\sigma_2)}_{X\oplus X}(t)
=\pairing{(\rho(t),T_t\rho(t))}{(\sigma_1(t),\sigma_2(t))}_{\H_t\oplus \H_t}.
\]
Thus $(\sigma_1(t),\sigma_2(t))\in (P_{G(T_t)}(\H_t\oplus \H_t))^\perp$ for each $t\in\R$, and we have the inclusion $\big((P_{G(T_\bullet)}(X\oplus X))^\perp\big)_t\subset(P_{G(T_t)}(\H_t\oplus \H_t))^\perp$.

Conversely, let $(\xi_t,\zeta_t)\in (P_{G(T_t)}(\H_t\oplus \H_t))^\perp$. By assumption, the graph projections are strongly continuous, so choosing any
$(\xi,\zeta)\in (\H\oplus \H)\ox C_0(\R)$ whose evaluation at $t$ is $(\xi_t,\zeta_t)$, we obtain a continuous path $(1-P_{G(T_\bullet)})(\xi,\zeta)$. By construction, 
$(1-P_{G(T_\bullet)})(\xi,\zeta)\in (P_{G(T_\bullet)}(X\oplus X))^\perp$ and $(1-P_{G(T_\bullet)})(\xi,\zeta)_t=(\xi_t,\zeta_t)$. Thus $\big((P_{G(T_\bullet)}(X\oplus X))^\perp\big)_t\supset(P_{G(T_t)}(\H_t\oplus \H_t))^\perp$, and $T_\bullet$ is regular. Self-adjointness now follows from the local-global principle and the pointwise 
self-adjointness of the $T_t$.

Moreover $t\mapsto {\rm Id}_\H-C^{-1}(T_t)=-2i(T_t-i)^{-1}$ converges to zero in norm, and for each $t$ is a compact operator on $\H$. Hence 
$(T_\bullet-i)^{-1}$ is a section of the bundle over $\R$ with fibre at $t\in\R$ the compacts $\K(\H_t)$ on $\H_t$. Since $(T_t-i)^{-1}$ extends continuously to a compact on $\H$ by Lemma \ref{prop:gap-cts-extension}, $(T_\bullet-i)^{-1}$ is contained in $C_0(\R,\K(\H))=\End^0_{C_0(\R)}(\H\ox C_0(\R))$. Thus $(T_\bullet-i)^{-1}$ is a compact endomorphism on $\H\ox C_0(\R)$ commuting with the projection $Q_\bullet$ onto the sections of the Hilbert space bundle $\H_t\to t$. 
Hence $(T_\bullet-i)^{-1}$  is a compact operator on $X$, and we obtain a Kasparov module.
\end{proof}


\begin{thebibliography}{9999}

\bibitem{agmon75}
S. Agmon. 
{\em Spectral properties of {S}chr{\"{o}}dinger operators and scattering theory}, Accademia Nazionale dei Lincei; Scuola Normale Superiore di Pisa, Pisa, 1975.

\bibitem{alexander24}
A. Alexander.
{\em Trace formula and Levinson's theorem in the presence of resonances},
Rev. Math. Phys., {\bf 37} (4), Article ID 2450036, 2025.


\bibitem{ANRR}
A. Alexander, D. T. Nguyen, A. Rennie, S. Richard.
{\em Levinson's theorem for two-dimensional scattering systems: it was a surprise, it is now topological!}, J. Spec. Th., {\bf 14} (2024), 991--1031.

\bibitem{AR23}
A. Alexander, A. Rennie.
{\em Levinson's theorem as an index pairing}, J. Funct. Anal., {\bf 286} (5), 2024.

\bibitem{AR23-4D}
A. Alexander, A. Rennie.
{\em The structure of the wave operator in four dimensions in the presence of resonances}, Lett. Math. Phys., {\bf 114} (2024), 122.



      
\bibitem{aps75}
M. F. Atiyah, V. K. Patodi, I. M. Singer.
{\em Spectral asymmetry and {R}iemannian geometry. {I}},
Math. Proc. Cambridge Philos. Soc., {\bf 77},  45--69 (1975)

\bibitem{aps76}
M. F. Atiyah, V. K. Patodi, I. M. Singer.
{\em Spectral asymmetry and {R}iemannian geometry. {III}},
Math. Proc. Cambridge Philos. Soc., {\bf 79},  71--99 (1976)

\bibitem{ADT} N. Azamov, T. Daniels, Y. Tanaka, {\em A topological approach to unitary spectral flow via continuous enumeration of eigenvalues}, J. Funct. Anal. {\bf 281} (2021) 109152.

\bibitem{BCPRSW}
M. T. Benameur, A. Carey, J. Phillips, A. Rennie, F. Sukochev, K. Wojciechowski.
{\em An analytic  approach to spectral flow in von {N}eumann algebras}, 
in `Analysis, geometry and topology of elliptic operators',  297--352. World Sci. Publ. (2006)

\bibitem{bolle86}
D. Boll\'{e}, F. Gesztesy, C. Danneels, S. F. J. Wilk,
{\em Threshold behaviour and Levinson's theorem for two-dimensional scattering systems: a surprise}, Phys. Rev. Lett. {\bf 56}, 1986, 900--903.

\bibitem{bolle85} D. Boll\'{e}, F. Gesztesy, S. F. J. Wilk. {\em A complete treatment of low-energy scattering in one dimensions}, J. Operator Theory, 1985.

\bibitem{bolle88} D. Boll\'{e}, C. Danneels, F. Gesztesy. {\em Threshold scattering in two dimensions}, Ann. Inst. Henri Poincar\'{e} Phys. Th\'{e}or., {\bf 48} (2), 1988.

\bibitem{bolle77}
D. Boll\'{e}, T. A. Osborn. {\em An extended {L}evinson's theorem}, J. Mathematical Phys., {\bf 18} (3), 1977, 432--440.

\bibitem{BooWoj} B. Boo{\ss}-Bavnbek, K. P.  Wojciechowski, {\em Elliptic boundary problems for {D}irac operators},
    Birkh\"auser, Boston 1993.
    
    \bibitem{BBF} B. Boo{\ss}-Bavnbek, K. Furutani, {\em
The Maslov Index: a Functional Analytical Definition
and the Spectral Flow Formula}, Tokyo J. Math.,
{\bf 21} (1), 1998.


\bibitem{BLP} B. Boo{\ss}-Bavnbek, M. Lesch, J. Phillips,
 {\em Unbounded {F}redholm operators and spectral flow}, Canad. J. Math., {\bf 57} (2),
     2005, 225--250.


\bibitem{BKR} C. Bourne, J. Kellendonk, A. Rennie, {\em The Cayley transform in complex, real and graded $K$-theory}, Int. J. Math. (2020) 2050074.


\bibitem{CGRS2}
 A. Carey, V. Gayral, A. Rennie, F. Sukochev, {\em Index theory for locally compact noncommutative spaces}, 
Mem. Amer. Math. Soc., {\bf 231} (1085), 2014.

\bibitem{CP1}
A. Carey, J. Phillips.
{\em Unbounded {F}redholm modules and spectral flow}, 
Canad. J. Math. {\bf 50} (4),  673--718 (1998)

\bibitem{CPRS1}
A. Carey, J. Phillips, A. Rennie, F. Sukochev. {\em The local index formula in semifinite von {N}eumann algebras {I}: {S}pectral flow}, Adv. Math., {\bf 202} (2), 2006, 451--516.



\bibitem{CPotSuk} A. Carey, D. Potapov, F. Sukochev, {\em Spectral flow is the integral of one forms on the Banach manifold of self adjoint Fredholm operators}, Adv. Math., {\bf 222} (5), 2009, 1809--1849.


\bibitem{dlH} P. de la Harpe, {\em Classical Banach-Lie groups and Banach-Lie algebras of operators in Hilbert space}, PhD Thesis, University of Warwick, 1972.

\bibitem{DSBW} N. Doll, H. Schulz-Baldes, N. Waterstraat, {\em Spectral flow: A functional analytic and index-theoretic approach}, de Gruyter Studies in Matthematics, Vol 94, 2023.

\bibitem{zworski19}
S. Dyatlov, M. Zworski. {\em Mathematical theory of scattering resonances}, volume 200 of
  Graduate Studies in Mathematics. American Mathematical Society, Providence, RI, 2019.



\bibitem{Getzler}
E. Getzler, {\em The odd Chern character in cyclic homology and spectral flow,}  Topology {\bf 32} (3) (1993) 489--507.

\bibitem{GGK} I. Gohberg, S. Goldberg, N. Krupnik, {\em Traces and Determinants of Linear Operators}, Birkh\"{a}user Verlag, 2000.

\bibitem{guillope81}
L. Guillop\'{e}. {\em Une formule de trace pour l'op\'{e}rateur de {S}chr\"{o}dinger}, PhD Thesis, Universit\'{e} Joseph Fourier Grenoble, 1981. 

\bibitem{jensen79}
A. Jensen, T. Kato. {\em Spectral properties of {S}chr\"{o}dinger operators and time-decay of
  the wave functions}, Duke Math. J., {\bf 46} (3), 1979, 583--611.
  


\bibitem{jensen80}
A. Jensen. {\em Spectral properties of {S}chr\"{o}dinger operators and time-decay of the wave functions results in {$L^{2}(\R^{m})$}, {$m\geq 5$}},  Duke Math. J., {\bf 47} (1), 1980, 57--80.

\bibitem{KL12} J. Kaad and M. Lesch, \emph{A local global principle for regular operators in Hilbert $C^{*}$-modules}, J. Funct. Anal., {\bf 262} (10) (2012), 4540-4569.
  
  \bibitem{KL13}
{J.~Kaad} and {M.~Lesch}. \emph{Spectral flow and the unbounded
  {K}asparov product}, Adv. Math. \textbf{248} (2013), 495--530.
  
  \bibitem{Ka1.5} G. G. Kasparov, {\em Hilbert $C^*$-modules: theorems of Stinespring and Voiculescu}, J. Operator Theory, {\bf 4} (1980), 133--150.


%
\bibitem{kellendonk08}
J. Kellendonk, S. Richard.
{\em  On the structure of the wave operators in one dimensional potential
  scattering}, Math. Phys. Electron. J., {\bf 14}, 1-21, 2008.

%
\bibitem{kellendonk12}
J. Kellendonk, S. Richard. {\em On the wave operators and {L}evinson's theorem for potential
  scattering in {$\mathbb{R}^3$}}, Asian-Eur. J. Math., {\bf 5} (1), 2012.

  
\bibitem{KirLes} P. Kirk, M. Lesch,
     {\em The {$\eta$}-invariant, {M}aslov index, and spectral flow for
              {D}irac-type operators on manifolds with boundary},
   Forum Math., {\bf16} (4), 2004, 553--629.
   
   \bibitem{KS80} 
M. Klaus, B. Simon.
{\em Coupling constant thresholds in nonrelativistic quantum mechanics. {I}. Short-range two-body case}, Ann. Physics, {\bf 130} (2), 1980, 251--281.
   
   
\bibitem{Lance} E. C. Lance, {\em Hilbert $C^*$-modules. A toolkit for operator algebraists}. 
London Mathematical Society Lecture Note Series, 210. Cambridge University Press, Cambridge 1995. {\rm x}+130 pp.


     
\bibitem{LU} M. Lesch, {\em  The uniqueness of the spectral flow on spaces of unbounded
              self-adjoint {F}redholm operators}, in
 Spectral geometry of manifolds with boundary and decomposition
              of manifolds,
    Contemp. Math.,
    {\bf 366}, Amer. Math. Soc., Providence, RI, 2005, 193--224.
    
    \bibitem{levinson49}
N. Levinson. {\em On the uniqueness of the potential in a {S}chr\"{o}dinger equation for a given asymptotic phase}, Danske Vid. Selsk. Mat.-Fys. Medd., {\bf 25} (9), 1949.

\bibitem{MR} B. Mesland, A. Rennie, {\em Friedrichs angle and alternating projections in Hilbert $C^*$-modules}, J. Math. Anal. App., {\bf 516} (2022), 126474, 19pp.

%
\bibitem{ped} Pedersen, G. K., {\em $C^*$-algebras and their automorphism groups}, London Mathematical Society Monographs, 14. Academic Press, Inc., 1979, {\rm ix} + 416 pp.

\bibitem{Phi96}
    J. Phillips, 
     {\em Self-adjoint Fredholm operators and spectral flow}, Canad. Math. Bull., {\bf 39} (4),
      1996,
    460--467.
    

\bibitem{Phi97}
J. Phillips.
{\em Spectral flow in type {I} and {II} factors --- a new approach}, 
in `Cyclic cohomology and noncommutative geometry', Fields Inst. Commun., {\bf 17},  137--153 (1997)

\bibitem{P} F. Pierrot, {\em Op\'erateurs r\'eguliers dans les $C^{*}$-modules
et structure des $C^{*}$-alg\'ebres de groupes de Lie semisimples complexes simplement connexes}, J. Lie theory, {\bf 16} (2006), 651--689.

    
\bibitem{Pu} A. Pushnitski, {\em The spectral shift function and the invariance principle}, J. Funct. Anal., {\bf 183}, 2001, 269--320.

\bibitem{Raeburn-Williams} I.~Raeburn and D.~Williams,
{\em Morita equivalence and continuous-trace $C^*$-algebras},
Mathematical Surveys and Monographs, 60. 
American Mathematical Society, Providence, RI, 1998. xiv+327 pp. 
ISBN: 0--8218--0860--5.
    
\bibitem{RS1} M. Reed, B. Simon, {\em Methods of Modern Mathematical Physics I: Functional Analysis}.
Academic Press Inc., 1972.

%

\bibitem{richard21}
S. Richard, R. Tiedra de Aldecoa, L. Zhang.
{\em Scattering operator and wave operators for 2{D} {S}chr\"{o}dinger operators with threshold obstructions}, Complex Anal. Oper. Theory, {\bf 15} (6) 2021.




\bibitem{Sakai77} T. Sakai, {\em On cut loci of compact symmetric spaces}, Hokkaido Math. J. {\bf 6} (1977), 136--161.

\bibitem{Simon77} B. Simon, {\em Notes on infinite determinants of Hilbert space operators}, Adv. Math., {\bf 24} (1977), 244--273.

\bibitem{Simon05} B. Simon, {\em Trace ideals and their applications}, AMS, 2nd edition, 2005.

\bibitem{Wahl07} 
C. Wahl C. \emph{On the noncommutative spectral flow}. {J.~Ramanujan Math. Soc.}, 
 \textbf{22} (2007), 135--187. 

\bibitem{WahlMems} C. Wahl, {\em Noncommutative Maslov index and eta-forms}, Mems. Am. Math. Soc, {\bf 189} (2007).

\bibitem{Wahl08}
C. Wahl. \emph{A new topology on the space of unbounded selfadjoint operators, $K$-theory and spectral flow}. In $C^\ast$-algebras and elliptic theory II, 297--309, Trends Math., Birkhäuser, Basel (2008).

\bibitem{Wahl09} C. Wahl, {\em Homological index formulas for elliptic operators over $C^*$-algebras}, New York J. Math., {\bf 15}, 319--351 (2009).



\bibitem{yafaev10}
D. R. Yafaev. {\em Mathematical scattering theory: Analytic theory}, {\bf 158}, 
  Mathematical surveys and monographs, American Mathematical Society, 2010.


\end{thebibliography}
\end{document}